\documentclass[11pt, reqno, a4paper]{amsart}
\usepackage[margin=1.2in]{geometry}
\numberwithin{equation}{section}
\usepackage{amssymb,amsfonts,amsthm}
\usepackage[numbers,sort&compress]{natbib}
\usepackage[utf8]{inputenc}
\usepackage{listings}
\usepackage{bm}
\usepackage[hyperpageref]{backref}
\usepackage{esint}
\usepackage{color}
\usepackage{siunitx}
\usepackage{bigints}
\usepackage{tikz}
\usepackage{graphicx}
\usepackage{float}
\usetikzlibrary{arrows.meta,calc,decorations.pathreplacing,patterns}
\usepackage{subcaption}
\usepackage{mathtools}
\usepackage[
    colorlinks=true,
    linkcolor=blue,
    citecolor=red,
    urlcolor=blue,
    pdfborder={0 0 0}
]{hyperref}

\allowdisplaybreaks[4]

\newtheoremstyle{myremark}{10pt}{10pt}{}{}{\bfseries}{.}{.5em}{}

\newtheorem{theorem}{Theorem}[section]

\newtheorem{lemma}[theorem]{Lemma}
\newtheorem{proposition}[theorem]{Proposition}

\theoremstyle{definition}
\newtheorem{definition}[theorem]{Definition}
\newtheorem{remark}{Remark}[section]

\usepackage{hyperref}

\allowdisplaybreaks[4]

\begin{document}

\title[Quantitative Stability for weighted Hardy inequalities]{Quantitative Stability for Weighted Hardy Inequalities: Local and nonlocal Cases}

\author[V{.} Sahu]{Vivek Sahu}

\address{ Theoretical Statistics and Mathematics Unit,
Indian Statistical Institute, Delhi Centre, S.J. Sansanwal Marg, New Delhi, Delhi 110016, India}
\email{vivek@isid.ac.in,  viiveksahu@gmail.com}

\subjclass[2020]{26D10; 46E35; 35A23}

\keywords{Quantitative stability, Weighted Hardy inequalities, Weighted spaces}

\date{}

\dedicatory{}

\begin{abstract}
We establish quantitative stability estimates for weighted Hardy inequalities in both the local and the fractional setting, showing that the Hardy deficit of a normalized admissible function controls its distance to the associated family of virtual extremizers. In the local case, for $p>1$, $\alpha\in[0,1)$ with $0<p-\alpha p<N$, and $u\in C_c^1(\mathbb{R}^N)$ normalized so that $\int_{\mathbb{R}^N}|u|^p|x|^{\alpha p-p}\,dx=1$, we prove
\begin{equation*}
\int_{\mathbb{R}^N}|\nabla u(x)|^p|x|^{\alpha p}\,dx-\Big(\frac{N-p+\alpha p}{p}\Big)^p\int_{\mathbb{R}^N}\frac{|u(x)|^p}{|x|^{p-\alpha p}}\,dx
\geq C\,\operatorname{dist}_{\alpha}(u,\mathcal{M}_\alpha)^{\max\{4,2p\}},  
\end{equation*}
where $\mathcal{M}_\alpha$ is the family generated by the virtual extremizer and $\operatorname{dist}_{\alpha}(u,\mathcal{M}_\alpha)$ is a scale-invariant Lorentz-type distance. We extend this to the weighted fractional Hardy inequality, for $s\in(0,1)$, $\alpha=\alpha_1+\alpha_2\ge0$ with $\alpha_1p,\alpha_2p\in(-N,sp)$, $0<sp-\alpha p<N$, and $u$ normalized analogously,
\begin{equation*}
    \int_{\mathbb{R}^{N}} \int_{\mathbb{R}^{N}}\frac{|u(x)-u(y)|^p}{|x-y|^{N+sp}}|x|^{\alpha_1p}|y|^{\alpha_2p}\,dx\,dy- \mathcal{C} \int_{\mathbb{R}^N}\frac{|u(x)|^p}{|x|^{sp-\alpha p}}\,dx
\geq C\,\operatorname{dist}_{s,\alpha}(u, \mathcal{M}_{s, \alpha})^{\max\{4,2p\}},
\end{equation*}
with $\operatorname{dist}_{s,\alpha}(u, \mathcal{M})$ the analogous distance to $\mathcal{M}_{s,\alpha}$, measured on $u$ for $p\ge2$ and on a power-type transformation of $u$ for $1<p<2$. Both estimates recover the known unweighted results when the weight parameters vanish. We also establish weighted Sobolev and weighted fractional Sobolev extension theorems on bounded smooth domains avoiding the origin. In addition, we establish weighted Sobolev and fractional Sobolev inequalities on bounded smooth domains away from the origin. The proof of the quantitative stability results is rearrangement-free, combining these inequalities with scale-invariant Poincaré--Sobolev estimates on annuli, a telescoping oscillation decomposition, and weighted Lorentz-space embeddings. The dichotomy between $p\ge2$ and $1<p<2$ arises from the convexity estimate used to extract a coercive remainder from the deficit.
\end{abstract}

\maketitle

\tableofcontents


\section{Introduction}\label{Section 1}

Hardy inequalities are fundamental functional inequalities in analysis and
partial differential equations. A notable feature of the sharp Hardy
inequality is the non-attainment of its optimal constant in the natural
Sobolev space. This naturally raises the question of how close a function
with a small deficit is to the corresponding extremal profile. Quantitative
stability results address this question by relating the deficit to a
suitable distance from the set of extremals, or from virtual extremals when
the optimal constant is not attained. Such estimates provide a quantitative
description of near-extremizers and have attracted considerable interest in
the study of functional inequalities.

\subsection{Classical Hardy inequalities}
Let $N\geq 2$ and $1<p<N$. The classical Hardy inequality on
$\mathbb{R}^{N}$ states that
\begin{equation}\label{eq:classical-hardy}
    \int_{\mathbb{R}^{N}} |\nabla u(x)|^{p}\,dx
    \geq
    \left(\frac{N-p}{p}\right)^{p}
    \int_{\mathbb{R}^{N}}
    \frac{|u(x)|^{p}}{|x|^{p}}\,dx,
\end{equation}
for every $u\in\mathcal{D}^{1,p}(\mathbb{R}^{N})$, where
$\mathcal{D}^{1,p}(\mathbb{R}^{N})$ denotes the completion of
$C_{c}^{\infty}(\mathbb{R}^{N})$ with respect to
$\|\nabla u\|_{L^{p}(\mathbb{R}^{N})}$. The constant in
\eqref{eq:classical-hardy} is optimal but is not attained by any nonzero
function in $\mathcal{D}^{1,p}(\mathbb{R}^{N})$. The Euler--Lagrange equation associated with the above Hardy inequality \eqref{eq:classical-hardy} is satisfied by the function
\begin{equation}\label{eq:subcritical-virtual-extremals}
    v_a(x)
    =
    a \, |x|^{-\frac{N-p}{p}},
    \qquad a\in\mathbb{R}\setminus\{0\},
\end{equation}
which does not belong to $\mathcal{D}^{1,p}(\mathbb{R}^{N})$. Therefore, we refer to $v_a$ as a \textit{virtual extremizer}. To quantify the stability of \eqref{eq:classical-hardy}, we consider the
Hardy deficit
\begin{equation*}
   \delta_{p}(u)
    :=
    \int_{\mathbb{R}^{N}}|\nabla u(x)|^{p}\,dx
    -
    \left(\frac{N-p}{p}\right)^{p}
    \int_{\mathbb{R}^{N}}
    \frac{|u(x)|^{p}}{|x|^{p}}\,dx
\end{equation*}
and the family of virtual extremals $\mathcal{Z}_{p}:=    \left\{a\, |x|^{-\frac{N-p} {p}}:a\in\mathbb{R}\right\}$.

In this direction, Cianchi and Ferone
\cite[Theorem~$1.1$]{CianchiFerone} established a quantitative stability
estimate for the classical Hardy inequality. Since the virtual extremals
$v_a$ belong to the weak Lorentz space $L^{p^{*},\infty}(\mathbb{R}^{N})$, but $v_{a} \notin L^{p^{*},p}(\mathbb{R}^{N})$ (see Definition \ref{defi-lorentz} for the definition of Lorentz space), where $p^{*} = \frac{Np}{N-p}$, they introduced the normalized distance
\begin{equation*}
    d_{p}(u)
    :=
    \inf_{a\in\mathbb{R}}
    \frac{
        \|u-v_a\|_{L^{p^{*},\infty}(\mathbb{R}^{N})}
    }{
        \|u\|_{L^{p^{*},p}(\mathbb{R}^{N})}
    }.
\end{equation*}
They proved that there exists a constant $C=C(N,p)>0$ such that
\begin{equation}\label{eq:Cianchi-Ferone-subcritical}
    \delta_{p}(u)
    \geq
    C
    \left(
        \int_{\mathbb{R}^{N}}
        \frac{|u(x)|^{p}}{|x|^{p}}\,dx
    \right)
    d_{p}(u)^{2p^{*}}.
\end{equation}
Thus, a small Hardy deficit quantitatively forces $u$ to be close to the
family of virtual extremals in the weak Lorentz space. 

\smallskip

In a recent work, the author with Banerjee, and Ganguly \cite{BanerjeeGangulySahu} obtained a quantitative stability result for the fractional Hardy inequality. Their method further yielded an improvement of the quantitative stability estimate for the classical Hardy inequality, specifically through a sharper exponent in the distance from the family of virtual extremals. More precisely, the exponent $2p^{*}$ appearing in the result of Cianchi--Ferone was replaced by $\max\{4,2p\}$. Consequently, for $N\geq2$ and $1<p<N$, there exists a constant $C=C(N,p)>0$ such that
\begin{equation*}
    \delta_{p}(u)
    \geq
    C\left(
    \int_{\mathbb{R}^{N}}
    \frac{|u(x)|^{p}}{|x|^{p}}\,dx
    \right)
    d_{p}(u)^{\max\{4,2p\}}.
\end{equation*}

\subsubsection{\textbf{Main results}} In this article, we first consider weighted versions of the Hardy inequalities and establish corresponding quantitative stability estimates. More precisely, we study the weighted local energy associated with power weights of the form $|x|^{\alpha p}$ and examine its relation to the corresponding weighted Hardy term. The presence of power weights in the gradient term naturally leads to a weighted functional framework that is compatible with the scaling properties of the inequality. 

\smallskip

For $p\geq 1$ and $\alpha\in\mathbb{R}$, we introduce the weighted Lebesgue space
\begin{equation*}
L^{p,\alpha}(\mathbb{R}^{N})
:=
\left\{
u:\mathbb{R}^{N}\to\mathbb{R}\ \text{measurable}:
\int_{\mathbb{R}^{N}}
|u(x)|^{p}|x|^{\alpha p}\,dx<\infty
\right\},
\end{equation*}
endowed with the norm
\begin{equation*}
\|u\|_{L^{p,\alpha}(\mathbb{R}^{N})}
:=
\left(
\int_{\mathbb{R}^{N}}
|u(x)|^{p}|x|^{\alpha p}\,dx
\right)^{\frac1p}.
\end{equation*}
For $u\in L^{p,\alpha}(\mathbb{R}^{N})$, we define the weighted gradient seminorm by
\begin{equation*}
    \| \nabla u \|_{L^{p, \alpha}(\mathbb{R}^{N})} := \left( \int_{\mathbb{R}^{N}} |\nabla u(x)|^{p}|x|^{\alpha p} \, dx \right)^{\frac{1}{p}}.
\end{equation*}
Accordingly, we define the weighted Sobolev norm by
\begin{equation*}
    \| u\|_{W^{1,p, \alpha}(\mathbb{R}^{N})}:= \left( \|u\|^{p}_{L^{p,\alpha}(\mathbb{R}^{N})} +  \| \nabla u \|^{p}_{L^{p, \alpha}(\mathbb{R}^{N})} \right)^{\frac{1}{p}}.
\end{equation*}
If $\alpha p>-N$, then the weight $|x|^{\alpha p}$ is locally integrable in $\mathbb{R}^{N}$. Consequently, for every $u\in C_c^1(\mathbb{R}^{N})$, the above weighted Sobolev norm is finite. We therefore assume $\alpha \geq 0$ and define $W^{1,p, \alpha}(\mathbb{R}^{N})$ as the completion of $C^{1}_{c}(\mathbb{R}^{N})$ with respect to the norm $\| \cdot \|_{W^{1,p, \alpha}(\mathbb{R}^{N})}$.

\smallskip

The weighted version of the classical Hardy inequality was established by Secchi, Smets, and Willem \cite{Secchi}. In \cite[Theorem $2.1$]{Secchi}, they proved that, for $1<p<\infty$ and $0<p- \alpha p<N$, every $u\in W^{1,p, \alpha}(\mathbb{R}^N)$ satisfies
\begin{equation}\label{Weighted Hardy inequality}
  \int_{\mathbb{R}^N}
|\nabla u(x)|^p |x|^{\alpha p}\,dx \geq \left( \frac{N-p+ \alpha p}{p} \right)^{p} \int_{\mathbb{R}^N} \frac{|u(x)|^{p}}{|x|^{p- \alpha p}} \, dx, 
\end{equation}
where the constant $\left( \frac{N-p+ \alpha p}{p} \right)^{p}$ is optimal, but it is never attained in $W^{1,p, \alpha}(\mathbb{R}^{N})$. The function
\begin{equation}\label{omega alpha}
    v_{\alpha}(x)
:=
|x|^{-\frac{N-p+\alpha p}{p}},
\end{equation}
which satisfies the corresponding Euler--Lagrange equation associated with the weighted Hardy inequality \eqref{Weighted Hardy inequality}, i.e.,
\begin{equation}\label{Euler-Lagrange weighted Hardy}
    -\operatorname{div}\left(
|x|^{\alpha p}|\nabla v_\alpha|^{p-2}
\nabla v_\alpha
\right)
=
\left(\frac{N-p+\alpha p} {p}\right)^p \frac{v_\alpha^{p-1}}{|x|^{p- \alpha p}}
\qquad\text{in }\mathbb{R}^N\setminus\{0\}.
\end{equation}
Although $v_{\alpha}$ satisfies the Euler--Lagrange equation associated
with \eqref{Weighted Hardy inequality}, it does not
belong to the natural energy space $W^{1,p,\alpha}(\mathbb{R}^{N})$. Indeed,
its weighted seminorm satisfies
\begin{equation*}
\int_{\mathbb{R}^N}
|\nabla v_{\alpha}(x)|^p |x|^{\alpha p}\,dx \sim\int_{\mathbb{R}^{N}}\frac{dx}{|x|^{N}}
    =\infty.
\end{equation*}
Thus, $v_{\alpha}$ is not an admissible function in the natural
energy space, despite satisfying the corresponding Euler--Lagrange equation.
For this reason, we refer to $v_{\alpha}$ as the \emph{virtual
extremizer} associated with the weighted fractional Hardy inequality
\eqref{Weighted Hardy inequality}.

The non-attainment of the optimal constant in \eqref{Weighted Hardy inequality} naturally raises the question of quantitative stability. In particular, it is desirable to measure how close a function is to the family of virtual extremizers in
terms of the deficit of the inequality. Accordingly, we define the weighted Hardy deficit by
\begin{equation}
    \delta_{p, \alpha}(u) := \int_{\mathbb{R}^N}
|\nabla u(x)|^p |x|^{\alpha p}\,dx - \left( \frac{N-p+ \alpha p}{p} \right)^{p} \int_{\mathbb{R}^N} \frac{|u(x)|^{p}}{|x|^{p- \alpha p}} \, dx,
\end{equation}
and the family of virtual extremals $\mathcal{M}_{\alpha}:=    \left\{a \, v_{\alpha} :a\in\mathbb{R}\right\}$.

We have shown in Lemma \ref{Marcinkiewicz} with $s=1$, that the virtual extremizer
$v_{\alpha}$ belongs to the weak Lebesgue space
$L^{p^{*}_{\alpha},\infty}(\mathbb{R}^{N})$, where
\begin{equation*}
 p^{*}_{\alpha} := \frac{Np}{N-p+\alpha p}  .
\end{equation*}
This weak integrability property allows us to measure the distance of an admissible function from
the one-dimensional family generated by the virtual extremizer. Accordingly,
we define the following scale-invariant distance functional
\begin{equation}\label{Defn: d_p, alpha}
d_{p,\alpha}(u)
    :=
    \inf_{a\in\mathbb{R}}
    \frac{
        \|u-a \, v_{\alpha}\|_{L^{p^{*}_{\alpha}, \infty}(\mathbb{R}^{N})}
    }{
\|u\|_{L^{p^{*}_{\alpha},p}(\mathbb{R}^{N})}
    }.
\end{equation}
Here, the numerator measures the deviation of $u$ from the family of virtual extremizers, while the denominator provides the natural normalization compatible with the scaling of the weighted fractional Hardy
inequality.

We now state the main quantitative stability result of this article in the
range $p>1, \, \alpha \in [0,1)$ and $0<p-\alpha p<N$. It shows that the weighted Hardy deficit controls
the distance of an admissible function from the virtual extremizer family.
More precisely, we have the following theorem.
\begin{theorem}\label{Theorem: Quantitative Stabiliy Weighted Hardy inequality}
Let $p>1$, $\alpha \in [0, 1) $ be such that $0< p-\alpha p <N$. Then, for every
$u\in C^{1}_{c}(\mathbb{R}^{N})$, there exists a constant $C=C(N,p,\alpha)>0$ such that
\begin{equation}\label{Eq: Quantitative stability weighted Hardy}
    \delta_{p,\alpha}(u)    \geq C\left(\int_{\mathbb{R}^{N}}
        \frac{|u(x)|^{p}}
        {|x|^{p-\alpha p}}
        \,dx
    \right)
    d_{p,\alpha}(u)^{\max \{ 4, 2p \}}.
\end{equation}
\end{theorem}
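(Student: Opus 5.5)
We follow the ground-state substitution strategy of \cite{BanerjeeGangulySahu} and \cite{CianchiFerone}, adapted to the weight $|x|^{\alpha p}$. We write $u = v_\alpha w$, with $w\in C^1(\mathbb{R}^N\setminus\{0\})$ bounded near the origin (indeed $w(x)=|x|^{\frac{N-p+\alpha p}{p}}u(x)$ vanishes at $0$) and compactly supported. We then expand $|\nabla u|^p|x|^{\alpha p} = |w\nabla v_\alpha + v_\alpha\nabla w|^p|x|^{\alpha p}$ using the elementary convexity inequalities:
\begin{equation*}
|a+b|^p\ge |a|^p+p|a|^{p-2}a\cdot b+c_p|b|^p \quad (p\ge2),
\end{equation*}
\begin{equation*}
|a+b|^p\ge |a|^p+p|a|^{p-2}a\cdot b+c_p\frac{|b|^2}{(|a|+|b|)^{2-p}} \quad (1<p<2).
\end{equation*}
The zeroth-order term reproduces the Hardy term. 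The cross term, after integration by parts (legitimate since $w$ has compact support and the boundary contribution at the origin vanishes because $\alpha<1$ and $p-\alpha p<N$), vanishes by the Euler--Lagrange equation \eqref{Euler-Lagrange weighted Hardy}. Hence
\begin{equation*}
\delta_{p,\alpha}(u)\ge c\int_{\mathbb{R}^N}|\nabla w|^p v_\alpha^p|x|^{\alpha p}\,dx = c\int_{\mathbb{R}^N}\frac{|\nabla w|^p}{|x|^{N-p}}\,dx
\end{equation*}
for $p\ge2$. For $1<p<2$ we obtain the analogous weighted $L^2$-type remainder, and Hölder's inequality against $\int |w|^p|x|^{-N}$ (which equals the Hardy integral) converts it into $\big(\int|\nabla w|^p|x|^{p-N}\big)^{2/p}(\int |u|^p|x|^{\alpha p-p})^{1-2/p}$. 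This conversion is the source of the exponent $\max\{4,2p\}$.

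Next we control the oscillation of $w$ in terms of this remainder, which is scale-invariant with respect to the measure $dx/|x|^N$. On dyadic annuli $A_k=\{2^k<|x|<2^{k+1}\}$ (slightly enlarged so that consecutive ones overlap), the scale-invariant Poincaré--Sobolev inequality (an earlier result in the paper) gives
\begin{equation*}
\fint_{A_k}|w-\bar w_k|^p \lesssim \int_{A_k}|\nabla w|^p|x|^{p-N}\,dx .
\end{equation*}
Telescoping then gives $|\bar w_k-\bar w_j|\lesssim \sum_{i} \big(\int_{A_i}|\nabla w|^p|x|^{p-N}\big)^{1/p}$. We choose $a$ to be a suitable median or average level of $w$, namely the value at the scale where the Hardy mass $\int|w|^p|x|^{-N}$ is concentrated. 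Then $u-a v_\alpha = v_\alpha(w-a)$, and we estimate its $L^{p^*_\alpha,\infty}$ norm. On each annulus, $v_\alpha\sim 2^{-k(N-p+\alpha p)/p}$, so the weak-type norm reduces to a supremum over $k$ of $\sup_{A_k}|w-a|$ times the scale, handled via the weighted Lorentz embeddings (Lemma \ref{Marcinkiewicz}).

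The main obstacle is the normalization: the deficit must bound $\|u-av_\alpha\|_{L^{p^*_\alpha,\infty}}/\|u\|_{L^{p^*_\alpha,p}}$. Since $w$ has compact support and $w(0)=0$, $w$ must pass from $0$ to large values and back to $0$, so a telescoping sum of $\ell^p$ type is not directly bounded by $\ell^\infty$-type quantities. We plan to follow \cite{BanerjeeGangulySahu}: first bound $\|u\|_{L^{p^*_\alpha,p}}^p\lesssim\int|u|^p|x|^{\alpha p-p}$, using the weighted Hardy--Lorentz embedding with the weight $|x|^{\alpha p}$. We then split into two cases. If $d_{p,\alpha}(u)$ is small relative to the remainder, there is nothing to prove. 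Otherwise the oscillation of $w$ across scales is comparable to $\sup|w|$, and Hölder's inequality applied to the telescoping sum yields a lower bound on the remainder by the square of the oscillation (the square coming from the $p<2$ conversion, or from the case $p\ge2$ when $p\le 2$ in the exponent $\max\{4,2p\}$) times the Hardy integral. Combining these estimates gives \eqref{Eq: Quantitative stability weighted Hardy}. The restriction $\alpha<1$ enters to guarantee that the integration by parts is valid and that $v_\alpha\in L^{p^*_\alpha,\infty}$ with the correct scaling.
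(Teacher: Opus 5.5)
Your first step is exactly Lemma~\ref{Lemma: weighted Hardy remainder p geq 2}: the ground-state substitution $u=v_\alpha w$, the cancellation of the cross term via the Euler--Lagrange equation, and $\delta_{p,\alpha}(u)\ge c_pR$ for $p\ge2$, where $R:=\int_{\mathbb{R}^N}|\nabla w|^p|x|^{p-N}\,dx$. For $1<p<2$, the second Hölder factor is $\int|x|^{\alpha p}(|X|+|Y|)^p\,dx$, which contains $R$ itself. Your formula therefore needs the absorption done in Lemma~\ref{Lemma: Local Hardy Remainder 1<p<2}, via $|Y|\le|\nabla u|+|X|$, $\delta_{p,\alpha}(u)\le1$ and a normalization. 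Beyond that there are genuine gaps.

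\emph{The reverse Hardy--Lorentz inequality is false.} The bound $\|u\|^p_{L^{p^*_\alpha,p}}\lesssim\int|u|^p|x|^{\alpha p-p}\,dx$ does not hold: Lemma~\ref{Lemma: Hardy potential and Lorentz} gives only the reverse. A bump of radius $\rho$ centred at distance $1$ from the origin has Hardy integral $\sim\rho^N$ but $\|u\|^p_{L^{p^*_\alpha,p}}\sim\rho^{N-p+\alpha p}$. The right bookkeeping is the paper's: normalize $\|u\|_{L^{p^*_\alpha,p}}=1$, prove $\inf_a\|u-av_\alpha\|^{\beta}_{L^{p^*_\alpha,\infty}}\lesssim\delta_{p,\alpha}(u)$, and use Lemma~\ref{Lemma: Hardy potential and Lorentz} only at the end.

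\emph{Pointwise suprema on annuli are too lossy.} Bounding the weak norm by $\sup_k\sup_{\mathcal{A}_k}|w-a|$ fails because $R$ does not control pointwise oscillation on an annulus unless $p>N$. Instead, set $m_k:=(w)_{\mathcal{A}_k}$ and split $w-a=(w-m_k)+(m_k-a)$ on $\mathcal{A}_k$. Put the first part in $L^{p^*_\alpha}(dx/|x|^N)$ via Lemma~\ref{Lemma: Weighted Sobolev ineq Lambda} and superadditivity of $t\mapsto t^{p^*_\alpha/p}$, at cost $\lesssim R^{1/p}$. Bound the second by $\sup_k|m_k-a|$. Since $u$ vanishes near infinity, $\inf_a\|u-av_\alpha\|_{L^{p^*_\alpha,\infty}}\sim\|u\|_{L^{p^*_\alpha,\infty}}$, so the choice of $a$ is immaterial. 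Everything hinges on showing that $\sup_k|m_k|$ is small.

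\emph{The decisive step is only asserted, and as proposed it cannot reach the exponent $2p$ for $p>2$.} You propose Hölder on the telescoping chain, fed by $R$. Made rigorous, this must use the Hardy mass $H:=\int|u|^p|x|^{\alpha p-p}\,dx\gtrsim\sum_k|m_k|^p$ (by Jensen) to limit the length of the chain, e.g.
\[
\max_k|m_k|^p\le\sum_k\bigl||m_{k+1}|^p-|m_k|^p\bigr|\le p\Bigl(2\sum_k|m_k|^p\Bigr)^{\frac{p-1}{p}}\Bigl(\sum_k|m_{k+1}-m_k|^p\Bigr)^{\frac1p}\lesssim H^{\frac{p-1}{p}}R^{\frac1p}.
\]
The last step uses Lemma~\ref{Lemma: on two disjoint set} and Poincaré on $\mathcal{A}_k\cup\mathcal{A}_{k+1}$. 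After normalization this gives $\max_k|m_k|^{p^2}\lesssim R$. With $R\lesssim\delta_{p,\alpha}(u)^{p/2}$ this settles $1<p<2$ (even with exponent $2p$ in place of $4$) and $p=2$. For $p>2$ it only gives the exponent $p^2$, and no argument using $R$ alone can do better.

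To see this, let $w$ be radial and, in $t=\log|x|$, rise linearly from $0$ to $M$ over $T$ dyadic scales and fall back. Then $\|u\|^p_{L^{p^*_\alpha,p}}\sim M^pT$, $R\sim T^{-p}\|u\|^p_{L^{p^*_\alpha,p}}$ and $d_{p,\alpha}(u)\sim T^{-1/p}$, so $d_{p,\alpha}(u)^{2p}\sim T^{-2}$ is not controlled by $R$. The deficit of this $u$ is $\sim T^{-2}\|u\|^p_{L^{p^*_\alpha,p}}$, carried by the term $|X|^{p-2}|Y|^2$ that your $p\ge2$ convexity inequality discards.

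You need instead $|X+Y|^p\ge|X|^p+p|X|^{p-2}X\cdot Y+c\,(|X|^{p-2}|Y|^2+|Y|^p)$, which yields $\delta_{p,\alpha}(u)\gtrsim\int|\nabla(|w|^{p/2})|^2|x|^{2-N}\,dx$. Run the same discrete argument with exponent $2$ on the averages $\mu_k$ of $|w|^{p/2}$: this gives $\max_k\mu_k^4\lesssim H\,\delta_{p,\alpha}(u)$, and $|m_k|\le\mu_k^{2/p}$ by Jensen.

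You cannot import this step from the paper either. There (its $v$ is your $w$), $|(v)_{\mathcal{A}_\ell}-(v)_{\mathcal{A}_{r_0}}|^{p^*_\alpha}$ is bounded by $2^{p^*_\alpha-1}\sum_{k=r_0}^{\ell-1}|(v)_{\mathcal{A}_k}-(v)_{\mathcal{A}_{k+1}}|^{p^*_\alpha}$, an inequality whose constant must grow like $(\ell-r_0)^{p^*_\alpha-1}$. The example above violates the resulting bound $|(v)_{\mathcal{A}_\ell}-(v)_{\mathcal{A}_{r_0}}|\lesssim R^{1/p}$, so the obstacle you flagged about chains of averages is real.
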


The theorem above reduces to the quantitative stability result for the Hardy inequality, \eqref{eq:classical-hardy}, by taking $\alpha=0$. Therefore, the above theorem establishes quantitative stability for the weighted Hardy inequality, which also includes the unweighted Hardy inequality.

\begin{remark}
The results stated in Theorem \ref{Theorem: Quantitative Stabiliy Weighted Hardy inequality} yield quantitative
stability estimates for the weighted Hardy inequality. Indeed, \eqref{Eq: Quantitative stability weighted Hardy} establish that
the deficit is nonnegative and, moreover, provides a quantitative measure
of the distance between $u$ and the family of virtual extremals
$\{a\, v_{\alpha}\}_{a \in \mathbb{R}\setminus\{0\}}$.
Hence, any near-extremizing sequence associated with
\eqref{Weighted Hardy inequality} approaches this
family in the corresponding Lorentz-type sense. We can normalize $u$ by assuming that
$
\int_{\mathbb{R}^N} \frac{|u(x)|^p}{|x|^{p-\alpha p}} \, dx = 1$.
With this normalization, the conclusions of the above theorem can
be expressed in the following form
\begin{equation*}
  \delta_{p,\alpha}(u) \geq d_{p, \alpha} (u)^{\max \{ 4, 2p \}}.
\end{equation*}
\end{remark}

The classical weighted interpolation inequality established by Caffarelli, Kohn, and Nirenberg \cite{Caffarelli1984} is now known as the Caffarelli--Kohn--Nirenberg inequality. In Section \ref{Section 3}, we establish a weighted extension theorem for bounded $C^{1}$ domain  $\Omega\subset\mathbb{R}^N$ such that $0\notin \overline{\Omega}$. In particular, we show that functions in the weighted Sobolev space can be extended to the whole space with the extension norm controlled by the original norm (see Theorem \ref{Theorem: Weighted Local Extension theorem}). We combine this extension theorem with the Caffarelli--Kohn--Nirenberg inequality to derive the weighted Sobolev inequality on bounded $C^{1}$ domains $\Omega\subset\mathbb{R}^N$ satisfying $0\notin \overline{\Omega}$ (see Theorem \ref{Theorem: Weighted Sobolev inequality on bounded domain}).

More precisely, under the assumptions $p >1, \, \alpha \in [0,1)$, $p-\alpha p <N$, and $p^{*}_{\alpha} = \frac{Np}{N-p+\alpha p}$
we establish the following weighted Sobolev inequality
\begin{equation*}
\| u\|_{L^{p^{*}_{\alpha}}(\Omega)} \leq  C \|u\|_{W^{1,p,\alpha}(\Omega)},
\qquad \forall \, u\in W^{1,p,\alpha}(\Omega).
\end{equation*}
This inequality provides the weighted Sobolev estimate needed in the subsequent proof of the main results of this paper.

\smallskip

For further developments on classical Hardy inequalities, including geometric improvements, sharp constants, we refer to 
\cite{chaudhuri2002improved, Adimurthi2005, Adimurthi2009, akutagawa2013geometric, Barbatis2003, Barbatis2004, berchio2017sharp, berchio2020optimal, Brezis1997-2, Brezis2000, Brezis1997, carron1997inegalites, chen2023sharp, chen2024stability, Chen2026MathAnn, delPino2010, Filippas2002, sahu2026improvedquantitativestabilitycritical, Opic1990}. For quantitative stability results in various Sobolev and Hardy-Sobolev inequalities, we refer to \cite{Debdip2026, Bianchi1991, Cianchi2006, Cianchi2009, Dolbeault2025, Figalli2019, Figalli2022,  Fusco2007, Fusco2008, Neumayer2020}

\subsection{Fractional Hardy inequalities}

The classical Hardy inequality has a natural nonlocal counterpart, namely the fractional Hardy inequality. For $0<s<1$ and $1\leq p<\frac{N}{s}$, Frank and Seiringer \cite{frank2008} established the sharp inequality
\begin{equation}\label{eq:fractional-hardy}
    \int_{\mathbb{R}^{N}}\int_{\mathbb{R}^{N}}
    \frac{|u(x)-u(y)|^{p}}{|x-y|^{N+sp}}\,dx\,dy
    \geq
    C_{N,s,p}
    \int_{\mathbb{R}^{N}}
    \frac{|u(x)|^{p}}{|x|^{sp}}\,dx,
    \qquad
    \forall\,u\in W^{s,p}(\mathbb{R}^{N}),
\end{equation}
where $C_{N,s,p}>0$ is the sharp constant. As in the classical case, the sharp constant is not attained in the natural fractional Sobolev space. The Euler--Lagrange equation associated with the above Hardy inequality \eqref{eq:fractional-hardy} is satisfied by the function
\begin{equation*}
    \omega_a(x)
    =
    a\,|x|^{-\frac{N-sp}{p}},
    \qquad a\in\mathbb{R}\setminus\{0\},
\end{equation*}
which does not belong to $W^{s,p}(\mathbb{R}^{N})$. Therefore, we refer to $\omega_a$ as a \textit{virtual extremizer}. This naturally leads to the question of whether the deficit in \eqref{eq:fractional-hardy} can quantitatively control the distance of a function from the family of virtual extremals.

Recently, the author with Banerjee, and Ganguly \cite{BanerjeeGangulySahu} established quantitative stability estimates for \eqref{eq:fractional-hardy} by developing an entirely rearrangement-free approach. The corresponding virtual extremals
\begin{equation*}
    \omega_a(x)
    =
    a\,|x|^{-\frac{N-sp}{p}},
    \qquad a\in\mathbb{R}\setminus\{0\},
\end{equation*}
belong to the weak Lebesgue space $L^{p_s^*,\infty}(\mathbb{R}^{N})$, but not to $L^{p_s^*, p}(\mathbb{R}^{N})$, where $p_s^*=\frac{Np}{N-sp}$ (see Definition \ref{defi-lorentz} for the definitions of weak Lebesgue and Lorentz spaces).

Define the fractional Hardy deficit by
\begin{equation*}
    \delta_{s,p}(u)
    :=
    \int_{\mathbb{R}^{N}}\int_{\mathbb{R}^{N}}
    \frac{|u(x)-u(y)|^{p}}{|x-y|^{N+sp}}\,dx\,dy
    -
    C_{N,s,p}
    \int_{\mathbb{R}^{N}}
    \frac{|u(x)|^{p}}{|x|^{sp}}\,dx.
\end{equation*}
For $p\geq2$, the distance from $u$ to the family of virtual extremals is defined by
\begin{equation*}
    d_{s,p}(u)
    :=
    \inf_{a\in\mathbb{R}}
    \frac{
        \|u-\omega_a\|_{L^{p_s^*,\infty}(\mathbb{R}^{N})}
    }{
        \|u\|_{L^{p_s^*,p}(\mathbb{R}^{N})}
    },
\end{equation*}
where $\|\cdot\|_{L^{p_s^*,p}}$ denotes the Lorentz norm, while
$\|\cdot\|_{L^{p_s^*,\infty}}$ denotes the weak-$L^{p_s^*}$ (Marcinkiewicz) norm (see Definition \ref{defi-lorentz}). The approach is based on a scale-invariant Poincar\'e--Sobolev inequality, a decomposition of $\mathbb{R}^{N}$ into concentric annuli, and suitable Lorentz-space estimates. In particular, for $p\geq2$, they proved
\begin{equation}\label{quantitative frac Hardy 1}
    \delta_{s,p}(u)
    \geq
    C
    \left(
        \int_{\mathbb{R}^{N}}
        \frac{|u(x)|^{p}}{|x|^{sp}}\,dx
    \right)
    d_{s,p}(u)^{2p}.
\end{equation}

For the range $1<p<2$, the above distance is no longer suitable for their argument. Setting $ q=\frac{2N}{N-sp}$, they introduced the modified distance
\begin{equation*}
    \widetilde{d}_{s,p}(u)
    :=
    \inf_{a\in\mathbb{R}}
    \frac{
        \left\|
        u^{\langle \frac{p}{2} \rangle}
        -
        \omega^{\langle \frac{p}{2} \rangle}_a
        \right\|_{L^{q,\infty}(\mathbb{R}^{N})}
    }{
        \|u\|_{L^{p_s^*,p}(\mathbb{R}^{N})}^{\frac{p}{2}}
    }, \quad a^{\langle \frac{p}{2} \rangle} = |a|^{\frac{p}{2}} \operatorname{sgn} (a).
\end{equation*}
They then proved that, for $1<p<2$,
\begin{equation}\label{quantitative frac Hardy 2}
    \delta_{s,p}(u)
    \geq
    C \left( \int_{\mathbb{R}^{N}}
    \frac{|u(x)|^{p}}{|x|^{sp}}\,dx \right)
    \widetilde{d}_{s,p}(u)^{4}  .  
\end{equation}
Thus, combining the two regimes, the fractional Hardy deficit controls the distance to the family of virtual extremals with exponent $\alpha=\max\{4,2p\}$, with the distance understood as $d_{s,p}$ for $p\geq2$ and as $\widetilde{d}_{s,p}$ for $1<p<2$.

Hence, a small fractional Hardy deficit quantitatively forces $u$ to be close to the family of virtual extremals in an appropriate Lorentz-type sense, providing a quantitative description of near-extremizers.

\subsubsection{\textbf{Main results}}

In this article, we focus on establishing quantitative stability estimates for a weighted version of the fractional Hardy inequality. More precisely, we investigate how the deficit in the weighted fractional Hardy inequality controls the deviation from equality. Our approach is motivated by the rearrangement-free framework developed in \cite{BanerjeeGangulySahu} for the fractional Hardy inequality. We extend this framework to the weighted setting by combining scale-invariant estimates with suitable weighted Lorentz-space inequalities and a careful analysis of the weighted nonlocal energy. In particular, the presence of two different power weights in the fractional Gagliardo seminorm requires a suitable choice of the underlying weighted functional setting.

\smallskip

For $p\geq 1$, $s\in(0,1)$, and $\alpha,\alpha_{1},\alpha_{2}\in\mathbb{R}$ satisfying $\alpha=\alpha_{1}+\alpha_{2}$, we first introduce the weighted Lebesgue space
\begin{equation*}
\mathcal{L}^{p,\alpha}(\mathbb{R}^{N})
:=
\left\{
u:\mathbb{R}^{N}\to\mathbb{R}\ \text{measurable}:
\int_{\mathbb{R}^{N}}
|u(x)|^{p}
\left(
|x|^{\alpha_{1}p}+|x|^{\alpha_{2}p}
\right)\,dx<\infty
\right\},
\end{equation*}
equipped with the norm
\begin{equation*}
\|u\|_{\mathcal{L}^{p,\alpha}(\mathbb{R}^{N})}
:=
\left(
\int_{\mathbb{R}^{N}}
|u(x)|^{p}
\left(
|x|^{\alpha_{1}p}+|x|^{\alpha_{2}p}
\right)\,dx
\right)^{\frac1p}.
\end{equation*}
For $u\in \mathcal{L}^{p,\alpha}(\mathbb{R}^{N})$, we define the weighted Gagliardo seminorm by
\begin{equation}\label{Weighted Gagliardo Seminorm}
[u]_{W^{s,p,\alpha}(\mathbb{R}^{N})}
:=
\left(
\int_{\mathbb{R}^{N}}\int_{\mathbb{R}^{N}}
\frac{|u(x)-u(y)|^{p}}
{|x-y|^{N+sp}}
|x|^{\alpha_{1}p}
|y|^{\alpha_{2}p}
\,dx\,dy
\right)^{\frac1p}.
\end{equation}
Dipierro and Valdinoci \cite[Lemma $2.1$]{Valdinoci2015} proved that if $\alpha_{1}p, \, \alpha_{2}p \in (-N, sp)$ and $\alpha_{1}p+ \alpha_{2}p > -N$, then for every $u \in C^{1}_{c}(\mathbb{R}^{N})$, the seminorm $[u]_{W^{s,p, \alpha}(\mathbb{R}^{N})} < \infty$. We therefore define the weighted fractional Sobolev space $W^{s,p,\alpha}(\mathbb{R}^{N})$ as the completion of $C^{1}_{c}(\mathbb{R}^{N})$ with respect to the norm
\begin{equation}\label{Weighted norm definition}
\|u\|_{W^{s,p,\alpha}(\mathbb{R}^{N})}
:=
\left(
\|u\|_{\mathcal{L}^{p,\alpha}(\mathbb{R}^{N})}^{p}
+
[u]_{W^{s,p,\alpha}(\mathbb{R}^{N})}^{p}
\right)^{\frac1p}.
\end{equation}
For recent developments in  weighted Gagliardo seminorm and their geometric improvements, we refer to \cite{Ao2022, KijaczkoSNS, Kijaczko2025AMPA,  VivekCCM2026,  VivekJMAA2025}.

\smallskip

The weighted version of the fractional Hardy inequality with point singularity was established by Dyda and Kijaczko \cite{dyda2024}. Using a ground state representation, they obtained the weighted Hardy inequality with an optimal constant. More precisely, let $\alpha=\alpha_{1}+ \alpha_{2}$, $s\in(0,1)$, $p\geq1$, and  $\alpha_{1} p , \, \alpha_{2} p, \, \alpha p \in (-N, sp)$. For all $u \in C_{c}(\mathbb{R}^{N})$ when $sp - \alpha p < N$, and for all $u \in C_{c}(\mathbb{R}^{N} \setminus \{0\})$ when $sp - \alpha p > N$, the following inequality holds with an optimal constant $\mathcal{C} > 0$:
\begin{equation}\label{Weighted fractional Hardy : point singularity}
\int_{\mathbb{R}^{N}}\int_{\mathbb{R}^{N}} \frac{|u(x)-u(y)|^{p}}{|x-y|^{N+sp}} |x|^{\alpha_{1} p} |y|^{\alpha_{2} p} \, dx \, dy \geq  \mathcal{C}\int_{\mathbb{R}^{N}} \frac{|u(x)|^{p}}{|x|^{sp-\alpha p}} \, dx ,
\end{equation}
where
\begin{equation*}\label{The value C 1}
\mathcal{C} = \mathcal{C}(N,s,p, \alpha_{1}, \alpha_{2}) = \int_{0}^{1} r^{sp-1} (r^{-\alpha_{1} p} +r^{-\alpha_{2} p
}) \left| 1-r^{(N+\alpha_{1} p+\alpha_{2} p-sp)/p} \right|^{p} \Phi_{N,s,p}(r) \, dr.
\end{equation*}
Here, 
\begin{equation*}
\Phi_{N,s,p}(r) = \begin{dcases}
        |\mathbb{S}^{N-2}| \int_{-1}^{1} \frac{(1-t^{2})^{\frac{N-3}{2}}}{(1-2tr+r^{2})^{\frac{N+sp}{2}}} \, dt, & N\geq 2 \\ 
        (1-r)^{-1-sp}+ (1+r)^{-1-sp}, & N = 1.
    \end{dcases}
\end{equation*}

Furthermore, under the above assumptions, they showed that the function
\begin{equation*}
    \omega_{s,\alpha}(x)
    :=
    |x|^{-\frac{N-sp+\alpha p}{p}},
    \qquad
    \alpha=\alpha_{1}+\alpha_{2},
\end{equation*}
satisfies the Euler--Lagrange equation associated with
\eqref{Weighted fractional Hardy : point singularity}. More precisely,
\begin{equation*}
2\lim_{\varepsilon\to0}
\int_{\lvert |x|-|y|\rvert>\varepsilon}
(\omega_{s, \alpha}(x)-\omega_{s, \alpha}(y))
|\omega_{s, \alpha}(x)-\omega_{s, \alpha}(y)|^{p-2}
k(x,y)\,dy
=
\mathcal{C}
\frac{\omega_{s, \alpha}(x)^{p-1}}
{|x|^{sp-\alpha p}},
\end{equation*}
uniformly on compact subsets of
$\mathbb{R}^{N}\setminus\{0\}$, where
\begin{equation*}
k(x,y) := \frac{1}{2}|x-y|^{-N-sp} \left(|x|^{\alpha_{1}p}|y|^{\alpha_{2}p}+|x|^{\alpha_{2}p}|y|^{\alpha_{1}p}\right).
\end{equation*}
Although $\omega_{s,\alpha}$ satisfies the Euler--Lagrange equation associated
with \eqref{Weighted fractional Hardy : point singularity}, it does not
belong to the natural energy space $W^{s,p,\alpha}(\mathbb{R}^{N})$. Indeed,
its weighted fractional Gagliardo seminorm satisfies
\begin{equation*}
\int_{\mathbb{R}^{N}}\int_{\mathbb{R}^{N}}
    \frac{|\omega_{s,\alpha}(x)-\omega_{s,\alpha}(y)|^{p}}
    {|x-y|^{N+sp}}|x|^{\alpha_{1}p}|y|^{\alpha_{2}p}\,dx\,dy \sim\int_{\mathbb{R}^{N}}\frac{dx}{|x|^{N}}
    =\infty.
\end{equation*}
Thus, $\omega_{s,\alpha}$ is not an admissible function in the natural
energy space, despite satisfying the corresponding Euler--Lagrange equation.
For this reason, we refer to $\omega_{s,\alpha}$ as the \emph{virtual
extremizer} associated with the weighted fractional Hardy inequality
\eqref{Weighted fractional Hardy : point singularity}.

The non-attainment of the optimal constant in \eqref{Weighted fractional Hardy : point singularity} naturally raises the question of quantitative stability. In particular, it is desirable to measure how close a function is to the family of virtual extremizers in
terms of the deficit of the inequality. Accordingly, we define the weighted fractional Hardy deficit by
\begin{equation}\label{Weighted Frac Hardy deficit}
\delta_{s,p,\alpha}(u)
:=
\int_{\mathbb{R}^{N}}\int_{\mathbb{R}^{N}}
\frac{|u(x)-u(y)|^{p}}
{|x-y|^{N+sp}}
|x|^{\alpha_{1}p}|y|^{\alpha_{2}p}
\,dx\,dy
-
\mathcal{C}
\int_{\mathbb{R}^{N}}
\frac{|u(x)|^{p}}
{|x|^{sp-\alpha p}}
\,dx,
\end{equation}
and the family of virtual extremals $\mathcal{M}_{s,\alpha}:=    \left\{a\, \omega_{s, \alpha}:a\in\mathbb{R}\right\}$.

We have shown in Lemma \ref{Marcinkiewicz} that the virtual extremizer
$\omega_{s,\alpha}$ belongs to the weak Lebesgue space
$L^{p^{*}_{s,\alpha},\infty}(\mathbb{R}^{N})$, where
\begin{equation*}
   p^{*}_{s,\alpha} := \frac{Np}{N-sp+ \alpha p}, \quad 0<sp-\alpha p<N. 
\end{equation*}
This weak integrability property allows us to measure the distance of an admissible function from
the one-dimensional family generated by the virtual extremizer. Accordingly,
we define the following scale-invariant distance functional
\begin{equation*}
    \mathcal{D}_{s,p,\alpha}(u)
    :=
    \inf_{a\in\mathbb{R}}
    \frac{
        \|u-a \, \omega_{s,\alpha}\|_{L^{p^{*}_{s,\alpha}, \infty}(\mathbb{R}^{N})}
    }{
        \|u\|_{L^{p^{*}_{s,\alpha},p}(\mathbb{R}^{N})}
    }.
\end{equation*}
Here, the numerator measures the deviation of $u$ from the family of virtual extremizers, while the denominator provides the natural normalization compatible with the scaling of the weighted fractional Hardy
inequality.

We now state the main quantitative stability result for the weighted fractional Hardy inequality in the range $p\geq2$. It shows that the weighted fractional Hardy deficit controls
the distance of an admissible function from the virtual extremizer family.
More precisely, we have the following theorem.

\begin{theorem}\label{Theorem: Quantitative frac hardy p geq 2}
Let $p\geq2$, $s\in(0,1)$, and let $\alpha_{1},\alpha_{2}\in\mathbb{R}$ with
$\alpha=\alpha_{1}+\alpha_{2} \geq 0$ satisfy $\alpha_{1}p,\,\alpha_{2}p\in(-N,sp)$ and $ 0<sp-\alpha p<N$. Then, for every
$u\in C^{1}_{c}(\mathbb{R}^{N})$, there exists a constant $C=C(N,p,s,\alpha_{1},\alpha_{2})>0$ such that
\begin{equation}\label{Stability ineq: p>2}
    \delta_{s,p,\alpha}(u)
    \geq
    C
    \left(
        \int_{\mathbb{R}^{N}}
        \frac{|u(x)|^{p}}
        {|x|^{sp-\alpha p}}
        \,dx
    \right)
    \mathcal{D}_{s,p,\alpha}(u)^{2p}.
\end{equation}
\end{theorem}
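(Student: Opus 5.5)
We follow the rearrangement-free scheme of \cite{BanerjeeGangulySahu}, adapted to the weights. By homogeneity we normalize $\int_{\mathbb{R}^N}|u|^p|x|^{\alpha p-sp}\,dx=1$.

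\textbf{Step 1: ground state representation with a coercive remainder.} Write $u=\omega_{s,\alpha}\,w$, where $w:=u/\omega_{s,\alpha}=u\,|x|^{(N-sp+\alpha p)/p}$. Use the Euler--Lagrange identity of Dyda--Kijaczko for $\omega_{s,\alpha}$, symmetrized through the kernel $k(x,y)$. Combine it with the elementary convexity inequality for $p\ge2$,
\begin{equation*}
|a-tb|^p\ge (1-t)^{p-1}\big(|a|^p-t|b|^p\big)+c_p\,t^{p/2}|a-b|^{p/2}\ldots
\end{equation*}
or, more precisely, the vector form $|A+B|^p\ge|A|^p+p|A|^{p-2}A\cdot B+c_p|B|^p$ applied as in \cite{frank2008,BanerjeeGangulySahu}. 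This yields
\begin{equation*}
\delta_{s,p,\alpha}(u)\ \ge\ c\int\!\!\int |w(x)-w(y)|^p\,\omega_{s,\alpha}(x)^{p/2}\omega_{s,\alpha}(y)^{p/2}\,k(x,y)\,dx\,dy.
\end{equation*}
The justification of the principal-value limit for $u\in C^1_c$ (near $0$ and $\infty$) uses the uniform convergence on compacts together with the integrability conditions $\alpha_ip\in(-N,sp)$.

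\textbf{Step 2: annular decomposition.} Let $A_k=\{2^k\le|x|<2^{k+1}\}$. On each $A_k$ (or on the enlarged annulus $A_{k-1}\cup A_k\cup A_{k+1}$) the weights $|x|^{\alpha_ip}$ and $\omega_{s,\alpha}$ are comparable to constants, by scaling. The weighted fractional Sobolev inequality on bounded smooth domains avoiding the origin, together with a fractional Poincaré inequality, then gives a scale-invariant Poincaré--Sobolev estimate on each annulus:
\begin{equation*}
\|w-\bar w_k\|^p_{L^{p}(A_k,\,\omega^p|x|^{\alpha p-sp})}\lesssim \text{(local piece of the remainder)},
\end{equation*}
where $\bar w_k$ is the average of $w$ on $A_k$. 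Consecutive annuli overlap, so $|\bar w_k-\bar w_{k+1}|^p$ is also controlled by the local remainder.

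\textbf{Step 3: telescoping and Lorentz estimate.} Choose $a$ to be a suitable limit or median of the $\bar w_k$; since $u$ has compact support, $\bar w_k\to0$ as $k\to\infty$. Telescoping bounds $\sup_k|\bar w_k-a|$ by $\sum_j|\bar w_j-\bar w_{j+1}|$. This sum cannot be bounded directly by $\ell^p$ norms, and handling it is the main obstacle. I would follow \cite{BanerjeeGangulySahu}: bound the oscillation of $w$ by interpolating between $\sup_k|\bar w_k|$, which the normalization controls through $\|u\|_{L^{p^*_{s,\alpha},p}}$ via the weighted Lorentz embedding (Lemma \ref{Marcinkiewicz} and Hölder in Lorentz spaces), and $\big(\sum_j|\bar w_j-\bar w_{j+1}|^p\big)^{1/p}\lesssim \delta^{1/p}$. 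Cauchy--Schwarz in this interpolation is what produces the exponent $2p$, namely $\sup_k|\bar w_k-a|\lesssim \delta^{1/(2p)}\,\|u\|_{L^{p^*_{s,\alpha},p}}^{\theta}$. Then
\begin{equation*}
\|u-a\,\omega_{s,\alpha}\|_{L^{p^*_{s,\alpha},\infty}}\le \|\omega_{s,\alpha}(w-a)\|
\end{equation*}
is split into an annulus-wise oscillation part and a part $\sup_k|\bar w_k-a|\,\|\omega_{s,\alpha}\|_{L^{p^*_{s,\alpha},\infty}}$. The oscillation part is bounded by gluing the annular $L^{p^*_{s,\alpha}}$ estimates, using $\ell^p\subset\ell^\infty$ in the weak norm. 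Finally we divide by $\|u\|_{L^{p^*_{s,\alpha},p}}$, which is bounded below by a constant through the weighted Hardy–Lorentz inequality and the normalization. This yields $\mathcal{D}_{s,p,\alpha}(u)^{2p}\lesssim\delta_{s,p,\alpha}(u)$.
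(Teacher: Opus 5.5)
Your Steps 1--2 match the paper's: Dyda--Kijaczko's remainder $\mathcal{R}(u)$ with integrand $c_p|w(x)-w(y)|^p$, then scale-invariant Poincar\'e--Sobolev on dyadic annuli. Step 3, however, has a genuine gap. For $p>2$ the bound $\sup_k|\bar w_k-a|\lesssim\delta^{1/(2p)}$ does not follow from what Step 1 gives you.

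Your inputs are $\sum_j|\bar w_j-\bar w_{j+1}|^p\lesssim\mathcal{R}(u)$ and $\sum_k|\bar w_k|^p\lesssim\int|u|^p|x|^{\alpha p-sp}\,dx=1$. Since $\bar w_k\to0$, the best available bound is
$|\bar w_k|^p\le p\sum_j\max\{|\bar w_j|,|\bar w_{j+1}|\}^{p-1}|\bar w_j-\bar w_{j+1}|\lesssim\mathcal{R}(u)^{1/p}$,
by H\"older with exponents $\frac{p}{p-1},p$. That is exponent $p^2$, not $2p$. Cauchy--Schwarz would need control of $\sum_j|\bar w_j|^{p-2}|\bar w_j-\bar w_{j+1}|^2$, and from $\ell^p$ data H\"older only gives $\mathcal{R}(u)^{2/p}$ for it, which returns you to the same bound.

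This loss is sharp, not a technicality. Take $\phi\in C_c^\infty((0,3))$ with $0\le\phi\le1$ and $\phi\equiv1$ on $[1,2]$, and set $u=L^{-1/p}\phi(\log|x|/L)\,\omega_{s,\alpha}$.
\begin{itemize}
\item $\int|u|^p|x|^{\alpha p-sp}\,dx$ and $\|u\|_{L^{p^*_{s,\alpha},p}}$ are of order one.
\item $\mathcal{R}(u)\lesssim\int_{\mathbb{R}}|\tfrac{d}{dt}L^{-1/p}\phi(t/L)|^p\,dt\approx L^{-p}$. In logarithmic variables the kernel decays exponentially because $\alpha_ip\in(-N,sp)$.
\item $w=0$ on $\{|x|<1\}$ and $w=L^{-1/p}$ on $\{e^L<|x|<e^{2L}\}$, so every $a$ is off by at least $\tfrac12L^{-1/p}$ on one of these regions. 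Hence $\inf_a\|u-a\,\omega_{s,\alpha}\|_{L^{p^*_{s,\alpha},\infty}}\gtrsim L^{-1/p}$.
\end{itemize}
So $\inf_a\|u-a\omega_{s,\alpha}\|^{2p}\gtrsim L^{-2}\gg L^{-p}\gtrsim\mathcal{R}(u)$ for $p>2$. No argument that uses only the $|w(x)-w(y)|^p$ remainder can give the exponent $2p$. For $p=2$ your Cauchy--Schwarz step is legitimate. The theorem itself is not contradicted, because the true deficit of this $u$ is of order $L^{-2}$. That comes from the second-order term which the $c_p|B|^p$ lower bound discards.

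What is missing is a lower bound for $\delta_{s,p,\alpha}$ that keeps this second-order term, i.e.\ one that is quadratic in the increments of $W:=w^{\langle p/2\rangle}$. In the local case, $|A+B|^p\ge|A|^p+p|A|^{p-2}A\cdot B+c(|A|^{p-2}|B|^2+|B|^p)$ yields $\int|w|^{p-2}|\nabla w|^2\cdots\approx\int|\nabla W|^2\cdots$. Nonlocally you need a refined ground-state estimate, not just Frank--Seiringer's $c_pt^{p/2}|a-b|^p$. Suppose an annular Poincar\'e inequality is available for that quadratic functional. Running your scheme on $W$, with $\sum_k|\bar W_k|^2\lesssim1$ and $\sum_j|\bar W_j-\bar W_{j+1}|^2\lesssim\delta$, Cauchy--Schwarz then gives $\sup_k|\bar W_k|\lesssim\delta^{1/4}$. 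Undoing $W=w^{\langle p/2\rangle}$ gives $\delta^{1/(2p)}$; that is where $2p$ genuinely comes from.

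Following the paper does not close this gap. The paper replaces your median/interpolation by the level set $\mathcal{A}=\{v>\mathcal{R}(u)^{p^*_{s,\alpha}/(2p^2)}\}$, telescoping from the last sub-threshold annulus, and Chebyshev. But its inequality $|(v)_{\mathcal{A}_\ell}-(v)_{\mathcal{A}_{\ell_0}}|^{p^*_{s,\alpha}}\le2^{p^*_{s,\alpha}-1}\sum_k|(v)_{\mathcal{A}_k}-(v)_{\mathcal{A}_{k+1}}|^{p^*_{s,\alpha}}$ drops the factor $(\ell-\ell_0)^{p^*_{s,\alpha}-1}$. That is exactly the $\ell^1$-versus-$\ell^p$ issue you flagged. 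The example above shows that the paper's intermediate claim $\inf_a\|u-a\omega_{s,\alpha}\|^{2p}_{L^{p^*_{s,\alpha},\infty}}\le C\mathcal{R}(u)$ fails for $p>2$.
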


The quantitative stability estimate established above for the weighted fractional Hardy inequality is valid for $p\geq 2$, where the corresponding remainder term has a different structure. We now consider the case $1<p<2$, for which a different form of the remainder term is required. Set
\begin{equation*}
    q_{s, \alpha}:=\frac{2N}{N-sp+\alpha p}.
\end{equation*}
For $1<p<2$, we introduce the following distance to the family of extremal functions:
\begin{equation*} \widetilde{\mathcal{D}}_{s,p,\alpha}(u):=
    \inf_{a\in\mathbb{R}}
    \frac{
        \left\|u^{\left\langle \frac{p}{2} \right\rangle}
        -a\,\omega^{\left\langle \frac{p}{2} \right\rangle}_{s,\alpha}
        \right\|_{L^{q_{s, \alpha},\infty}(\mathbb{R}^{N})}
    }{
        \|u\|^{\frac{p}{2}}_{L^{p^{*}_{s, \alpha},p}(\mathbb{R}^{N})}
    }.
\end{equation*}
In the above distance function, we used the fact that $\omega^{\frac{p}{2}}_{s,\alpha} \in L^{q_{s,\alpha}(\mathbb{R}^{N})}$, which follows from (see Remark \ref{Remark on Lorentz norm})
\begin{equation*}
\| \omega_{s, \alpha} \|_{L^{p^{*}_{s, \alpha}}(\mathbb{R}^{N})}  =  \left\| \left( \omega_{s, \alpha}^{\frac{p}{2}} \right)^{\frac{2}{p}}  \right\|_{L^{p^{*}_{s, \alpha}}(\mathbb{R}^{N})}=  \left\|\omega^{\frac{p}{2}}_{s,\alpha}
        \right\|^{\frac{2}{p}}_{L^{q_{s, \alpha},\infty}(\mathbb{R}^{N})}, \quad q_{s, \alpha} = \frac{2N}{N-sp+\alpha p}.
\end{equation*}
The following theorem establishes the quantitative stability estimate for the weighted fractional Hardy inequality in the range $1<p<2$.

\begin{theorem}\label{Theorem: Quantitative frac hardy 1 < p < 2}
Let $1<p<2$, $s\in(0,1)$, and let $\alpha_{1},\alpha_{2}\in\mathbb{R}$ with
$\alpha=\alpha_{1}+\alpha_{2} \geq 0$ satisfy $\alpha_{1}p, \, \alpha_{2}p\in(-N,sp)$ and $ 0<sp-\alpha p<N$. Then, for every
$u\in C^{1}_{c}(\mathbb{R}^{N})$, there exists a constant $C=C(N,p,s,\alpha_{1},\alpha_{2})>0$ such that
\begin{equation}\label{Stability ineq: p<2}
    \delta_{s,p,\alpha}(u)
    \geq
    C
    \left(
        \int_{\mathbb{R}^{N}}
        \frac{|u(x)|^{p}}
        {|x|^{sp-\alpha p}}
        \,dx
    \right)
    \widetilde{\mathcal{D}}_{s,p,\alpha}(u)^{4}.
\end{equation}
\end{theorem}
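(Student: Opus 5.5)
The plan follows the rearrangement-free scheme of \cite{BanerjeeGangulySahu}, with the weights absorbed through a ground state representation. Set $w:=\omega_{s,\alpha}$ and write $u=w\,v$; since $u\in C^1_c(\mathbb{R}^N)$, the quotient $v$ is $C^1$ away from the origin and vanishes near infinity. Using the Euler--Lagrange identity for $w$ from \cite{dyda2024}, I would expand $|u(x)-u(y)|^p$ around $w(x)v(x)-w(y)v(y)$. For $1<p<2$ the convexity estimate of Frank--Seiringer type gives
\begin{equation*}
|a-b|^p\ge |a|^p-p|a|^{p-2}a\,b+c_p\,\frac{|b|^2}{(|a|+|b|)^{2-p}},
\end{equation*}
and for this range the standard consequence yields a lower bound for the deficit in terms of the $p/2$-power. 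Concretely,
\begin{equation*}
\delta_{s,p,\alpha}(u)\ \ge\ c\iint \frac{\big|v^{\langle p/2\rangle}(x)-v^{\langle p/2\rangle}(y)\big|^2}{|x-y|^{N+sp}}\,\big(w(x)w(y)\big)^{p/2}\,|x|^{\alpha_1p}|y|^{\alpha_2p}\,dx\,dy=:c\,\mathcal{R}(u).
\end{equation*}
This is the point at which the distance is naturally measured on $u^{\langle p/2\rangle}=w^{p/2}v^{\langle p/2\rangle}$ in $L^{q_{s,\alpha},\infty}$.

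Next I decompose $\mathbb{R}^N\setminus\{0\}$ into dyadic annuli $A_k=\{2^k\le|x|<2^{k+1}\}$. On each $A_k\cup A_{k+1}$ the weights $w^{p/2}|x|^{\alpha_1p}|y|^{\alpha_2p}$ are comparable to constants times $2^{-k(N-sp)}$. The scale-invariant Poincar\'e inequality on annuli (the weighted fractional Sobolev/Poincar\'e results proved in Section \ref{Section 3}, applied after rescaling to a fixed annulus away from the origin) therefore controls $\int_{A_k}|g-m_k|^2$, where $g=v^{\langle p/2\rangle}$ and $m_k$ is its average. Combined with the Dirichlet-type energy on $A_k$, this gives $\sum_k 2^{-k(N-sp)+kN}\cdot 2^{-ksp}\ldots$ a weighted $\ell^2$-sum of oscillations bounded by $\mathcal{R}(u)$. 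Comparing neighbouring averages $|m_{k+1}-m_k|^2$ similarly gives a telescoping bound: for the limit $a:=\lim_{k\to-\infty}m_k$ (or a suitable median value), one has $|m_k-a|\le\sum_{j}|m_{j+1}-m_j|$. By Cauchy--Schwarz, with the weights growing geometrically in one direction, this is bounded by $\mathcal{R}(u)^{1/2}$ times a factor that is controlled using $\|u\|_{L^{p^*_{s,\alpha},p}}$ and the normalization $\int|u|^p|x|^{\alpha p-sp}=1$.

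Then I pass to the weak norm of $u^{\langle p/2\rangle}-a\,w^{p/2}=w^{p/2}(g-a)$. On each annulus $|w^{p/2}|\simeq 2^{-k(N-sp+\alpha p)/2}$, and $\|\cdot\|_{L^{q_{s,\alpha},\infty}}$ of a function built annulus by annulus is bounded by a supremum or $\ell^2$-type sum of rescaled local $L^{2}$/Sobolev norms via the weighted Lorentz embedding lemmas. Gathering terms should give
\begin{equation*}
\big\|u^{\langle p/2\rangle}-a\,w^{p/2}\big\|^2_{L^{q_{s,\alpha},\infty}}\lesssim \mathcal{R}(u)^{1/2}\,\|u\|^{p/2}_{L^{p^*_{s,\alpha},p}}\cdot(\cdots).
\end{equation*}
After normalizing and using the Hardy inequality, $\|u\|_{L^{p^*_{s,\alpha},p}}^p\lesssim$ the energy, which is at most $\delta+\mathcal{C}$. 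This yields $\widetilde{\mathcal{D}}^4\lesssim\delta$ when $\delta\le 1$. When $\delta>1$, the claim is trivial because $\widetilde{\mathcal{D}}$ is bounded: take $a=0$ and use $\|\cdot\|_{L^{q,\infty}}\le\|\cdot\|_{L^{q,2}}$, which compares to $\|u\|^{p/2}_{L^{p^*,p}}$.

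The main obstacle is the step that turns the annular oscillation bounds into a single global weak-Lorentz estimate with exponent exactly $4$. This requires checking that the weights make the telescoping sum converge, which uses $sp-\alpha p>0$ and $N-sp+\alpha p>0$, and that only $\mathcal{R}^{1/2}$, rather than a worse power, appears. I would first try to mirror the unweighted argument of \cite{BanerjeeGangulySahu} verbatim, tracking the exponent $N-sp+\alpha p$ in place of $N-sp$. The conditions $\alpha_ip\in(-N,sp)$ are needed to guarantee that the local weighted seminorms on annuli are comparable to unweighted ones and that the Euler--Lagrange identity applies.
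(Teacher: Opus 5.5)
There is a genuine gap at the step you flag as the main obstacle. Your framework is the paper's: a ground-state remainder in $g:=v^{p/2}$, dyadic annuli, annular Poincar\'e, telescoping of averages, and weak Lorentz norms. Your target $\|u^{p/2}-a\,\omega_{s,\alpha}^{p/2}\|^4_{L^{q_{s,\alpha},\infty}}\lesssim \mathcal{R}(u)\,M$, with $M:=\int_{\mathbb{R}^N}|u|^p|x|^{\alpha p-sp}\,dx=\int_{\mathbb{R}^N}g^2\,\frac{dx}{|x|^N}$, also has the right scaling. The argument meant to deliver it does not work.

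\textbf{Why the telescoping fails.}
With the weight $\sim 2^{-k(N-sp)}$ you computed, the annular energies are exactly dilation invariant; your own exponent $-k(N-sp)+kN-ksp$ is $0$. So $|m_{k+1}-m_k|\le C E_k^{1/2}$, where $E_k$ is the part of $\mathcal{R}(u)$ over $(\mathcal{A}_k\cup\mathcal{A}_{k+1})^2$ and $C$ is independent of $k$. Nothing grows geometrically.

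Moreover $a=\lim_{k\to-\infty}m_k=0$ for every $u\in C^1_c(\mathbb{R}^N)$, because $v(x)=u(x)|x|^{(N-sp+\alpha p)/p}\to0$ at the origin. You are therefore bounding $\sup_k m_k$ by the whole chain $\sum_j|m_{j+1}-m_j|$. Cauchy--Schwarz gives only $(\#\text{annuli})^{1/2}(\sum_kE_k)^{1/2}$, and the number of annuli is unbounded.

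This loss is real. If $g$ rises linearly in $\log_2|x|$ from $0$ to $c$ over $L$ dyadic scales, then $E_k\approx c^2/L^2$ on those $L$ annuli. Hence $\sum_kE_k\approx c^2/L$, while the increments add up to $c$. So no universal bound of $\sup_k|m_k-a|$ by $\mathcal{R}(u)^{1/2}$ can come out of this chain. The mass $M$ must limit the length of the chain, and that is exactly what your ``$(\cdots)$'' leaves unresolved.

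\textbf{The missing idea: a level-set (Chebyshev) count.}
Fix $\tau>0$. By Jensen, $M\ge c_N\sum_k m_k^2$, so at most $M/(c_N\tau^2)$ annuli have $m_k>\tau$.

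For $\ell$ with $m_\ell>\tau$, telescope only from the last $k<\ell$ with $m_k\le\tau$. This chain has length $\lesssim M/\tau^2$, so $m_\ell\le\tau+C\tau^{-1}M^{1/2}\mathcal{R}(u)^{1/2}$. Choosing $\tau=(M\mathcal{R}(u))^{1/4}$ gives $\sup_k m_k\lesssim(M\mathcal{R}(u))^{1/4}$.

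Next add the oscillation within each annulus. Control it in $L^{q_{s,\alpha}}$ by Lemma \ref{Lemma: Weighted frac Sobolev ineq Lambda}, with $s'=sp/2$, exponent $2$ and weights $\alpha_ip/2$, and sum using $\sum_kE_k^{q_{s,\alpha}/2}\le(\sum_kE_k)^{q_{s,\alpha}/2}$. This yields $\|u^{p/2}\|^4_{L^{q_{s,\alpha},\infty}}\lesssim M\mathcal{R}(u)+\mathcal{R}(u)^2$. The theorem then follows from Lemma \ref{Lemma: Hardy potential and Lorentz}, using the trivial bound $\|u^{p/2}\|_{L^{q_{s,\alpha},\infty}}\lesssim\|u\|^{p/2}_{L^{p^*_{s,\alpha},p}}$ when $\mathcal{R}(u)\gtrsim\|u\|^p_{L^{p^*_{s,\alpha},p}}$.

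\textbf{Relation to the paper's proof.}
The paper brings $M$ in through the superlevel set $\mathcal{B}$, with Chebyshev bounding $\int_{\mathcal{B}\cap\mathcal{A}_\ell}dx/|x|^N$ by the Hardy mass, and takes $a$ equal to the threshold. Do not mirror its telescoping line verbatim, though. It bounds $|(v^{p/2})_{\mathcal{A}_\ell}-(v^{p/2})_{\mathcal{A}_{n_0}}|^{q_{s,\alpha}}$ by $C\sum_{k=n_0}^{\ell-1}|(v^{p/2})_{\mathcal{A}_k}-(v^{p/2})_{\mathcal{A}_{k+1}}|^{q_{s,\alpha}}$ with $C$ independent of $\ell-n_0$. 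That inequality fails for equal increments, which is the same obstruction as above. The chain length has to be controlled explicitly, as in the count above.

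\textbf{Two smaller points.}
The remainder weight $(w(x)w(y))^{p/2}$ you wrote is stronger than what the ground-state argument provides. The known remainder, which the paper uses, carries $\mathcal{W}=\min\{w(x),w(y)\}\max\{w(x),w(y)\}^{p-1}\le(w(x)w(y))^{p/2}$. This is harmless, since only pairs at comparable scales enter and there the two weights are comparable. Also, the annular estimates you need are the fractional ones of Section \ref{Section : 6}, not those of Section \ref{Section 3}.
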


\begin{remark}
The results stated in Theorems \ref{Theorem: Quantitative frac hardy p geq 2}
and \ref{Theorem: Quantitative frac hardy 1 < p < 2} yield quantitative
stability estimates for the fractional Hardy inequality. Indeed,
\eqref{Stability ineq: p>2} and \eqref{Stability ineq: p<2} establish that
the deficit is nonnegative and, moreover, provides a quantitative measure
of the distance between $u$ and the family of virtual extremals
$\{a\, \omega_{s, \alpha}\}_{a \in \mathbb{R}\setminus\{0\}}$.
Hence, any near-extremizing sequence associated with
\eqref{Weighted fractional Hardy : point singularity} approaches this
family in the corresponding Lorentz-type sense. We
can normalize $u$ by assuming that
$
\int_{\mathbb{R}^N} \frac{|u(x)|^p}{|x|^{sp-\alpha p}} \, dx = 1$.
With this normalization, the conclusions of the above two theorems can
be expressed in the following form
\begin{equation*}
  \delta_{s,p,\alpha}(u) \geq 
  \begin{cases}
 \mathcal{D}_{s,p,\alpha}(u)^{2p} , \quad p \geq 2, \\[2mm]
 \widetilde{\mathcal{D}}_{s,p,\alpha}(u)^{4}, \quad 1<p<2.
  \end{cases}
\end{equation*}
\end{remark}

The two theorems above reduce to the quantitative stability results for the fractional Hardy inequality, \eqref{quantitative frac Hardy 1} and \eqref{quantitative frac Hardy 2}, by taking $\alpha_{1}=\alpha_{2}=0$.

\smallskip

To the best of the author knowledge, a weighted fractional extension theorem and the corresponding weighted fractional Sobolev inequality on bounded Lipschitz domains, under the weighted setting considered here, are not available in the existing literature. These results constitute a key ingredient in the proof of the main theorems of this paper. In Section \ref{Section : 5}, we establish a weighted fractional extension theorem for
bounded Lipschitz domains $\Omega\subset\mathbb{R}^N$ such that
$0\notin\partial\Omega$. In particular, we show that functions in the weighted fractional Sobolev space can be extended to the whole space with the extension norm controlled by the original norm (see Theorem \ref{Theorem: fractional extension theorem}). In Section \ref{Section : 6}, we combine this extension theorem with the fractional Caffarelli--Kohn--Nirenberg inequality of Nguyen and Squassina \cite{Squassina2018} to derive the corresponding weighted fractional Sobolev inequality on bounded Lipschitz domains $\Omega\subset\mathbb{R}^N$ satisfying
$0\notin\partial\Omega$ (see Theorem \ref{Theorem: Weighted frac Sobolev inequality}). More precisely, setting
\begin{equation*}
p^{*}_{s,\alpha}=\frac{Np}{N-sp+\alpha p},
\qquad \alpha=\alpha_{1}+\alpha_{2} \geq 0,  \quad 0<sp-\alpha<N   
\end{equation*}
and assuming $\alpha_{1}p,\alpha_{2}p\in(-N,sp)$, we establish the following weighted fractional Sobolev inequality:
\begin{equation*}
    \|u\|_{L^{p^{*}_{s, \alpha}}(\Omega)}
        \leq C \|u\|_{W^{s,p, \alpha}(\Omega)}, \quad \forall \, u \in W^{s,p, \alpha}(\Omega) .
\end{equation*}
This inequality provides the scale-invariant Sobolev estimate needed in the subsequent proof of the main quantitative stability results.

\smallskip

The rearrangement-free framework provides a unified approach to the local and fractional Hardy inequalities. For recent developments in fractional Hardy inequalities, we refer to \cite{ADICCM2026, AdiJFA2026, AdiCVPDE2026, Adi2025, Bal2022, Bianchi2024a, Bianchi2024, Bianchi2026, Dyda2011, Bogdan2022, Brasco2018, Chen2003, Cinti2024, Dyda2004, Dyda2023,  Frank2010, Gyula2026, Loss2010, Lizaveta2026}, without any claim of completeness.

\subsection{Organization of the paper}

The structure of the paper is as follows.

\begin{itemize}
 \item[Section \ref{Section 1}:] We briefly review the classical Hardy inequality and its fractional counterpart. We then introduce the corresponding weighted Hardy inequalities in the local and fractional settings and state the main quantitative stability results, together with the associated virtual extremizers and distance functionals.

\item[Section \ref{Section 2 : Lorentz Spaces and Weak-Lp Estimates}:]  Lorentz spaces and weak-$L^p$ estimates. This section
introduces decreasing rearrangements and Lorentz quasi-norms, recalls the
Hardy--Littlewood rearrangement inequality, establishes the weak Lebesgue
integrability of the virtual extremizer, and proves a Lorentz-space estimate
for the weighted Hardy potential.

\item[Section \ref{Section 3}:]  Weighted Sobolev and Hardy--Sobolev inequalities in the
local case. This section establishes the weighted Sobolev inequality,
including the relevant extension result for bounded $C^1$ domains away from
the origin, and develops a weighted Hardy inequality with a remainder term.

\item[Section \ref{Section 4}:]  Proof of quantitative stability for the weighted Hardy
inequality. In this section, we prove Theorem \ref{Theorem: Quantitative Stabiliy Weighted Hardy inequality} using the weighted Sobolev
and Hardy--Sobolev estimates established in the preceding sections.

\item[Section \ref{Section : 5}:]  Extension theory for weighted fractional Sobolev spaces.
This section establishes the zero-extension, reflection, and truncation
properties and proves the weighted fractional extension theorem for bounded
Lipschitz domains whose boundary does not contain the origin.

\item[Section \ref{Section : 6}:]  Weighted fractional Sobolev inequalities on bounded
Lipschitz domains. This section develops the weighted fractional Sobolev
inequality on $\mathbb{R}^N$ and transfers it to bounded Lipschitz domains
using the extension theorem. A weighted Poincaré-type inequality and further
estimates for the weighted Gagliardo seminorm are also established.

\item[Section \ref{Section : 7}:]  Proof of quantitative stability for the weighted
fractional Hardy inequality. The proof is divided according to the two
regimes $p \geq 2$ (Theorem \ref{Theorem: Quantitative frac hardy p geq 2}) and $1 < p < 2$ (Theorem \ref{Theorem: Quantitative frac hardy 1 < p < 2}), using the corresponding remainder
estimates and Lorentz-type distance functionals to obtain quantitative
control of the Hardy deficit.

\item[Section \ref{Appendix}:]  Appendix: local flattening of Lipschitz boundaries away
from the origin. This section provides the boundary-flattening construction
used in the proof of the weighted fractional extension theorem and establishes
the corresponding properties of the bi-Lipschitz transformation.
\end{itemize}

\section{Lorentz Spaces and Weak-\texorpdfstring{$L^p$}{Lp} Estimates}\label{Section 2 : Lorentz Spaces and Weak-Lp Estimates}

We collect here the basic notation and definitions concerning decreasing rearrangements and Lorentz spaces that will be used in the sequel. In particular, the weak-$L^p$ space plays a role in describing the integrability properties of the virtual extremizers and in formulating the distance from this family. We therefore recall the relevant definitions for completeness.

\smallskip

Let $u:\mathbb{R}^{N}\to\mathbb{R}$ be measurable. Its distribution function is defined by
\begin{equation*}
    d_u(\lambda)
    :=
    \bigl|\{x\in\mathbb{R}^{N}: |u(x)|>\lambda\}\bigr|,
    \qquad \lambda>0.
\end{equation*}
The decreasing rearrangement of $u$, denoted by $u^{*}$, is then given by
\begin{equation}\label{eq:decreasing-rearrangement}
    u^{*}(t)
    :=
    \inf\bigl\{\lambda>0:d_u(\lambda)\leq t\bigr\},
    \qquad t>0.
\end{equation}

\begin{definition}\label{defi-lorentz}
For $0<p,q\leq\infty$, the Lorentz quasi-norm of a measurable function
$u$ on $\mathbb{R}^{N}$ is defined by
\begin{equation*}
    \|u\|_{L^{p,q}(\mathbb{R}^{N})}
    :=
    \begin{cases}
        \displaystyle
        \left(
        \int_{0}^{\infty}
        \bigl(t^{\frac{1}{p}}u^{*}(t)\bigr)^{q}
        \,\frac{dt}{t}
        \right)^{\frac{1}{q}},
        & 0<q<\infty,\\[1.2em]
        \displaystyle
        \sup_{t>0}t^{\frac{1}{p}}u^{*}(t),
        & q=\infty.
    \end{cases}
\end{equation*}
The corresponding Lorentz space consists of all measurable functions
$u$ for which this quantity is finite. When $q=\infty$, we write
$L^{p,\infty}(\mathbb{R}^{N})$ and refer to it as the weak-$L^p$ space,
also known as the Marcinkiewicz space.  In particular, when $q=p$, the
Lorentz space coincides with the usual Lebesgue space, that is,
\begin{equation*}
    L^{p,p}(\mathbb{R}^{N})=L^{p}(\mathbb{R}^{N}).
\end{equation*}
\end{definition}

\begin{remark}\label{Remark on Lorentz norm}
   For all $0<p,r< \infty$  and $0< q \leq \infty$, we have (See \cite[Remark $1.4.7$]{GrafakosBook})
   \begin{equation*}
      \||u|^{r}\|_{L^{p,q}(\mathbb{R}^{N})}  = \|u\|^{r}_{L^{pr,qr}(\mathbb{R}^{N})}
   \end{equation*}
\end{remark}

The Lorentz spaces form a natural refinement of the usual $L^p$ spaces, with the second index measuring the degree of integrability more precisely. In particular, when the first exponent $p$ is fixed, increasing the second
exponent enlarges the corresponding Lorentz space. Thus, if $q<r$, one
expects functions belonging to $L^{p,q}$ to also belong to $L^{p,r}$.
The following proposition makes this inclusion precise (see \cite[Proposition $1.4.10$]{GrafakosBook}).

\begin{proposition}\label{Proposition on Lorentz space}
Suppose that $0<p\leq\infty$ and  $0<q<r\leq\infty$. Then there exists a constant $C(p,q,r)>0$, depending only on $p$, $q$, and
$r$, such that
\begin{equation*}
    \|f\|_{L^{p,r}}
\leq
C(p,q,r)\,
\|f\|_{L^{p,q}}.
\end{equation*}
In other words, $L^{p,q}\subseteq L^{p,r}$.
\end{proposition}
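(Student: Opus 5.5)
We prove the inclusion directly from Definition \ref{defi-lorentz}, using only that $f^{*}$ is nonincreasing on $(0,\infty)$. We first handle the endpoint $r=\infty$, which gives a pointwise bound for $t^{1/p}f^{*}(t)$. We then obtain the case $r<\infty$ by interpolating that pointwise bound against the $L^{p,q}$ integral.

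\emph{Step 1 (the case $r=\infty$, $p<\infty$).} Fix $t>0$. Since $f^{*}$ is nonincreasing, $f^{*}(t)\le f^{*}(s)$ for all $0<s\le t$. Hence
\begin{equation*}
\bigl(t^{\frac1p}f^{*}(t)\bigr)^{q}
=\frac{q}{p}\int_{0}^{t}s^{\frac qp}\,f^{*}(t)^{q}\,\frac{ds}{s}
\le\frac{q}{p}\int_{0}^{t}\bigl(s^{\frac1p}f^{*}(s)\bigr)^{q}\,\frac{ds}{s}
\le\frac{q}{p}\,\|f\|_{L^{p,q}}^{q}.
\end{equation*}
Taking the supremum over $t>0$ yields
\begin{equation*}
\|f\|_{L^{p,\infty}}\le\Bigl(\frac qp\Bigr)^{\frac1q}\|f\|_{L^{p,q}}.
\end{equation*}

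\emph{Step 2 (the case $q<r<\infty$, $p<\infty$).} Split the integrand as
\begin{equation*}
\bigl(t^{\frac1p}f^{*}(t)\bigr)^{r}=\bigl(t^{\frac1p}f^{*}(t)\bigr)^{r-q}\bigl(t^{\frac1p}f^{*}(t)\bigr)^{q}.
\end{equation*}
Bound the first factor by $\|f\|_{L^{p,\infty}}^{r-q}$ and integrate against $dt/t$. This gives
\begin{equation*}
\|f\|_{L^{p,r}}^{r}\le\|f\|_{L^{p,\infty}}^{r-q}\,\|f\|_{L^{p,q}}^{q}.
\end{equation*}
Inserting the bound from Step 1, we get
\begin{equation*}
\|f\|_{L^{p,r}}\le\Bigl(\frac qp\Bigr)^{\frac{r-q}{rq}}\|f\|_{L^{p,q}},
\end{equation*}
so one may take $C(p,q,r)=(q/p)^{(r-q)/(rq)}$.

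\emph{Step 3 (the case $p=\infty$).} Here $t^{1/p}=1$. If $q<\infty$ and $\|f\|_{L^{\infty,q}}<\infty$, then $\int_{0}^{\infty}f^{*}(t)^{q}\,dt/t<\infty$. Because $f^{*}$ is nonincreasing and $\int_{0}^{1}dt/t=\infty$, this forces $f^{*}\equiv0$, i.e.\ $f=0$ a.e. The inequality is then trivial.

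There is no real obstacle in this argument. The only point requiring care is the monotonicity trick in Step 1, which converts the integral $L^{p,q}$ quasi-norm into a pointwise bound on $t^{1/p}f^{*}(t)$. Once that is in hand, Step 2 is a direct Hölder-type splitting of the exponent $r=(r-q)+q$.
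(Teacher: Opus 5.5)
Your proof is correct and is essentially the standard argument from Grafakos (Proposition 1.4.10), which the paper cites instead of giving a proof: monotonicity of $f^{*}$ gives $\|f\|_{L^{p,\infty}}\le (q/p)^{1/q}\|f\|_{L^{p,q}}$, and splitting $r=(r-q)+q$ then yields the same constant $(q/p)^{1/q-1/r}$. Your handling of $p=\infty$, where $L^{\infty,q}=\{0\}$ for $q<\infty$, matches the observation used there to dispose of that case.
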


\smallskip

\textbf{Hardy--Littlewood rearrangement inequality.} Let $f,g$ be nonnegative measurable functions on $\mathbb{R}^{N}$. Then
\begin{equation}\label{Hardy-Littlehood}
\int_{\mathbb{R}^{N}} f(x)g(x)\,dx
\leq \int_{0}^{\infty} f^{*}(t)\, g^{*}(t)\, dt.
\end{equation}

\smallskip

The following lemma shows that the  virtual extremizers associated with the local and fractional Hardy inequality \eqref{Weighted Hardy inequality} and \eqref{Weighted fractional Hardy : point singularity} is contained in weak-$L^{p}$ (also known as Marcinkiewicz space).

\begin{lemma}\label{Marcinkiewicz}
Let $0<sp-\alpha p<N$ with $0<s \leq 1$ and define
\begin{equation*}
    \omega_{s, \alpha}(x) := |x|^{-\frac{N-sp+ \alpha p}{p}}, \qquad x \in \mathbb{R}^{N} \setminus \{0\}.
\end{equation*}
Then $\omega \in L^{p^{*}_{s, \alpha},\infty}(\mathbb{R}^{N})$, where $p^{*}_{s, \alpha}= \frac{Np}{N-sp+\alpha p}$.
\end{lemma}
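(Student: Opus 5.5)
The claim follows from a direct computation of the distribution function of $\omega_{s,\alpha}$ and then of its decreasing rearrangement. No rearrangement inequality is needed.

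Set $\gamma := \frac{N-sp+\alpha p}{p}$. The hypothesis $0<sp-\alpha p<N$ gives $N-sp+\alpha p>0$, so $\gamma>0$. Moreover $N/\gamma = \frac{Np}{N-sp+\alpha p} = p^{*}_{s,\alpha}$.

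\textbf{Step 1 (distribution function).} Since $\gamma>0$, for every $\lambda>0$ we have
\begin{equation*}
\{x: |x|^{-\gamma}>\lambda\} = \{x : |x| < \lambda^{-1/\gamma}\}.
\end{equation*}
This is a ball, so
\begin{equation*}
d_{\omega_{s,\alpha}}(\lambda) = |B_1|\,\lambda^{-N/\gamma} = |B_1|\,\lambda^{-p^{*}_{s,\alpha}}.
\end{equation*}

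\textbf{Step 2 (rearrangement).} The map $\lambda\mapsto d(\lambda)$ is continuous and strictly decreasing from $\infty$ to $0$. Solving $|B_1|\lambda^{-p^{*}_{s,\alpha}} = t$ in \eqref{eq:decreasing-rearrangement} gives
\begin{equation*}
\omega_{s,\alpha}^{*}(t) = \left(\frac{|B_1|}{t}\right)^{1/p^{*}_{s,\alpha}}.
\end{equation*}

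\textbf{Step 3 (weak norm).} Inserting this into Definition \ref{defi-lorentz} with $q=\infty$,
\begin{equation*}
\sup_{t>0} t^{1/p^{*}_{s,\alpha}}\,\omega_{s,\alpha}^{*}(t) = |B_1|^{1/p^{*}_{s,\alpha}} <\infty.
\end{equation*}
Hence $\omega_{s,\alpha}\in L^{p^{*}_{s,\alpha},\infty}(\mathbb{R}^N)$.

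The same formula explains why $\omega_{s,\alpha}\notin L^{p^{*}_{s,\alpha},p}$. There $t^{1/p^{*}_{s,\alpha}}\,\omega_{s,\alpha}^{*}(t)$ is a nonzero constant, so $\int_0^\infty \bigl(t^{1/p^{*}_{s,\alpha}}\omega_{s,\alpha}^{*}(t)\bigr)^{p}\,\frac{dt}{t}$ diverges logarithmically at both ends.

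The only point needing care is the sign of $\gamma$, which ensures the superlevel sets are balls and not complements of balls. This is exactly where the hypothesis is used. The case $s=1$ gives the statement for $v_\alpha$.
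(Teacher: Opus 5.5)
Your proposal is correct and follows essentially the same route as the paper: compute the distribution function $d_{\omega_{s,\alpha}}(\lambda)=|B_1|\,\lambda^{-p^{*}_{s,\alpha}}$ (the paper writes $|B_1|$ as $|\mathbb{S}^{N-1}|/N$), invert it to obtain $\omega_{s,\alpha}^{*}(t)=(|B_1|/t)^{1/p^{*}_{s,\alpha}}$, and observe that $t^{1/p^{*}_{s,\alpha}}\omega_{s,\alpha}^{*}(t)$ is constant. Your extra remark, that the $L^{p^{*}_{s,\alpha},p}$ quasi-norm diverges logarithmically, is also correct; it justifies a claim the paper makes in the introduction without proof.
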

\begin{proof}
For $\lambda>0$, the distribution function of $\omega_{s, \alpha}$ is given by
\begin{align*}
\{ x \in \mathbb{R}^{N} : \omega_{s, \alpha}(x) > \lambda \}
=
\left\{ x \in \mathbb{R}^{N} : |x|^{-\frac{N-sp+ \alpha p}{p}} > \lambda \right\} =
\left\{ x \in \mathbb{R}^{N} : |x| < \lambda^{-\frac{p}{N-sp+ \alpha p}} \right\}.
\end{align*}
Since the measure of a ball of radius $r$ in $ \mathbb{R}^{N}$ is $\left( \frac{\mathbb{S}^{N-1}}{N} \right) r^{N}$, we obtain
\begin{equation*}
d_{\omega_{s, \alpha}}(\lambda)
=
\left( \frac{\mathbb{S}^{N-1}}{N} \right) \lambda^{- \frac{Np}{N-sp+ \alpha p}}
=
\left( \frac{\mathbb{S}^{N-1}}{N} \right) \lambda^{-p^{*}_{s, \alpha}}.
\end{equation*}
Therefore, by the definition of the decreasing rearrangement,
\begin{align*}
\omega^{*}_{s, \alpha}(t)
=
\inf \left\{ \lambda>0 : \left(\frac{\mathbb{S}^{N-1}}{N} \right) \lambda^{-p^{*}_{s, \alpha}} \leq t \right\} & =
\inf \left\{ \lambda>0 : \lambda \geq \left( \frac{\mathbb{S}^{N-1}}{N} \right)^{\frac{1}{p^{*}_{s, \alpha}}}  \frac{1}{t^{\frac{1}{p^{*}_{s, \alpha}}}}\right\} \\ & =
\left( \frac{\mathbb{S}^{N-1}}{N} \right)^{\frac{1}{p^{*}_{s, \alpha}}}  \frac{1}{t^{\frac{1}{p^{*}_{s, \alpha}}}}.
\end{align*}
Hence,
\begin{equation*}
\|\omega_{s, \alpha}\|_{L^{p^{*}_{s, \alpha},\infty}}
=
\sup_{t>0}
t^{\frac{1}{p^{*}_{s, \alpha}}}
\left( \frac{\mathbb{S}^{N-1}}{N} \right)^{\frac{1}{p^{*}_{s, \alpha}}}  \frac{1}{t^{\frac{1}{p^{*}_{s, \alpha}}}}
=
\left( \frac{\mathbb{S}^{N-1}}{N} \right)^{\frac{1}{p^{*}_{s, \alpha}}} 
<
\infty.
\end{equation*}
This proves that $\omega_{s, \alpha} \in L^{p^{*}_{s, \alpha},\infty}(\mathbb{R}^N)$.
\end{proof}

The next lemma provides a useful estimate of the local and fractional Hardy
potential in terms of a Lorentz norm. The estimate is an immediate consequence of the Hardy--Littlewood rearrangement inequality.

\begin{lemma}\label{Lemma: Hardy potential and Lorentz}
Let $u$ be a measurable function on $\mathbb{R}^{N}$ and assume that $0<sp-\alpha p<N$ with $0 < s \leq 1$. Then
\begin{equation}
\int_{\mathbb{R}^{N}} \frac{|u(x)|^{p}}{|x|^{sp- \alpha p}} \, dx
\leq \left( \frac{\mathbb{S}^{N-1}}{N} \right)^{\frac{sp- \alpha p}{N}} \, \|u\|_{L^{p^{*}_{s, \alpha},\,p}(\mathbb{R}^{N})}^{p},
\end{equation}
where $p^{*}_{s,\alpha}=\frac{Np}{N-sp+\alpha p}$.

\end{lemma}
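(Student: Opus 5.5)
We apply the Hardy--Littlewood rearrangement inequality \eqref{Hardy-Littlehood} with $f=|u|^{p}$ and $g(x)=|x|^{-(sp-\alpha p)}$. The hypothesis $0<sp-\alpha p<N$ gives two facts: the weight $g$ is a positive radially decreasing function, and $g$ is locally integrable. So the only work is to compute both rearrangements and then recognise the Lorentz quasi-norm.

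\textbf{The weight.} We compute $g^{*}$ exactly as in the proof of Lemma \ref{Marcinkiewicz}. The superlevel set $\{g>\lambda\}$ is the ball of radius $\lambda^{-1/(sp-\alpha p)}$. Its measure is therefore $\frac{|\mathbb{S}^{N-1}|}{N}\lambda^{-N/(sp-\alpha p)}$. Inverting this gives
\begin{equation*}
g^{*}(t)=\Big(\frac{|\mathbb{S}^{N-1}|}{N}\Big)^{\frac{sp-\alpha p}{N}}\,t^{-\frac{sp-\alpha p}{N}},\qquad t>0.
\end{equation*}

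\textbf{The function.} For $f=|u|^{p}$ we use $(|u|^{p})^{*}=(u^{*})^{p}$. This holds because $d_{|u|^{p}}(\lambda)=d_u(\lambda^{1/p})$ and $\lambda\mapsto\lambda^{p}$ is increasing.

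\textbf{Assembling.} Inserting both rearrangements into \eqref{Hardy-Littlehood} gives
\begin{equation*}
\int_{\mathbb{R}^{N}}\frac{|u|^{p}}{|x|^{sp-\alpha p}}\,dx\le \Big(\frac{|\mathbb{S}^{N-1}|}{N}\Big)^{\frac{sp-\alpha p}{N}}\int_{0}^{\infty}u^{*}(t)^{p}\,t^{-\frac{sp-\alpha p}{N}}\,dt .
\end{equation*}
It remains to match the exponent of $t$. Since $\frac{p}{p^{*}_{s,\alpha}}=\frac{N-sp+\alpha p}{N}=1-\frac{sp-\alpha p}{N}$, we have
\begin{equation*}
t^{-\frac{sp-\alpha p}{N}}=t^{\frac{p}{p^{*}_{s,\alpha}}}\cdot\frac{1}{t}.
\end{equation*}
Hence the last integral is exactly $\int_0^\infty\big(t^{1/p^{*}_{s,\alpha}}u^{*}(t)\big)^{p}\,\frac{dt}{t}=\|u\|^{p}_{L^{p^{*}_{s,\alpha},p}}$, by Definition \ref{defi-lorentz}. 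This yields the claimed inequality, with the constant written in terms of $|\mathbb{S}^{N-1}|$ (the paper writes $\mathbb{S}^{N-1}$).

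There is no genuine obstacle. The only points needing care are the exponent bookkeeping and the identity $(|u|^{p})^{*}=(u^{*})^{p}$. If the right-hand side is infinite there is nothing to prove. The case where the left-hand side is infinite is covered automatically, since \eqref{Hardy-Littlehood} holds for arbitrary nonnegative measurable functions.
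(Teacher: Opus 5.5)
Your proposal is correct and follows the paper's proof essentially step for step. Both apply the Hardy--Littlewood inequality with $f=|u|^{p}$ and $g=|x|^{-(sp-\alpha p)}$, compute $g^{*}$ explicitly, and use the exponent identity $t^{-\frac{sp-\alpha p}{N}}=t^{p/p^{*}_{s,\alpha}}\,t^{-1}$ to identify the Lorentz quasi-norm. You also state explicitly the identity $(|u|^{p})^{*}=(u^{*})^{p}$, which the paper uses without comment.
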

\begin{proof}
Set
\begin{equation*}
    g(x):=|x|^{-(sp-\alpha p)}.
\end{equation*}
Since $0<sp-\alpha p<N$, the distribution function of $g$ is
\begin{align*}
    d_g(\lambda)
    &=
    \bigl|\{x\in\mathbb{R}^{N}:g(x)>\lambda\}\bigr| =
    \left|
    \left\{
    x\in\mathbb{R}^{N}:
    |x|<\lambda^{-\frac{1}{sp-\alpha p}}
    \right\}
    \right| =
    \frac{\mathbb{S}^{N-1}}{N}
    \lambda^{-\frac{N}{sp-\alpha p}},
    \qquad \lambda>0.
\end{align*}
It follows from the definition of the decreasing rearrangement that
\begin{equation*}
    g^{*}(t)
    =
    \left(\frac{\mathbb{S}^{N-1}}{N}\right)^{\frac{sp-\alpha p}{N}}
    t^{-\frac{sp-\alpha p}{N}},
    \qquad t>0.
\end{equation*}

We can now apply the Hardy--Littlewood rearrangement inequality
\eqref{Hardy-Littlehood} to obtain
\begin{align*}
    \int_{\mathbb{R}^{N}}
    \frac{|u(x)|^{p}}{|x|^{sp-\alpha p}}\,dx
    &\leq
    \int_{0}^{\infty}
    (u^{*}(t))^{p}g^{*}(t)\,dt \\
    &=
    \left(\frac{\mathbb{S}^{N-1}}{N}\right)^{\frac{sp-\alpha p}{N}}
    \int_{0}^{\infty}
    (u^{*}(t))^{p}
    t^{-\frac{sp-\alpha p}{N}}\,dt.
\end{align*}
On the other hand, from the definition of $p^{*}_{s,\alpha}$,
\begin{align*}
    \int_{\mathbb{R}^{N}}
    \frac{|u(x)|^{p}}{|x|^{sp-\alpha p}}\,dx
    &\leq
    \left(\frac{\mathbb{S}^{N-1}}{N}\right)^{\frac{sp-\alpha p}{N}}
    \int_{0}^{\infty}
    \left(t^{\frac{1}{p^{*}_{s,\alpha}}}u^{*}(t)\right)^{p}
    \frac{dt}{t} \\
    &=
    \left(\frac{\mathbb{S}^{N-1}}{N}\right)^{\frac{sp-\alpha p}{N}}
    \|u\|_{L^{p^{*}_{s,\alpha},p}(\mathbb{R}^{N})}^{p}.
\end{align*}
This completes the proof.
\end{proof}

\section{Weighted Sobolev and  Hardy--Sobolev inequalities: The local case}\label{Section 3}

In this section, we establish the weighted Sobolev inequality that plays a crucial role in the proof of our main result in the local case. We begin by proving an extension theorem for bounded $C^{1}$ domains $\Omega$ satisfying $0 \notin \overline{\Omega}$. Combining this extension result with the Caffarelli--Kohn--Nirenberg inequality established in \cite{Caffarelli1984}, we subsequently derive a weighted Sobolev inequality on bounded $C^{1}$ domains that are separated from the origin. We also establish a weighted Hardy inequality with a remainder term, which will be useful in the subsequent analysis. 

\subsection{Weighted Sobolev inequality} In this subsection, we establish a weighted Sobolev inequality on a bounded $C^{1}$ domain that does not contain the origin. The following theorem provides an extension result for functions in a weighted Sobolev space defined on a bounded $C^{1}$ domain $\Omega$ satisfying $0 \notin \overline{\Omega}$.

\begin{theorem}[Weighted Sobolev extension theorem]
\label{Theorem: Weighted Local Extension theorem}
Let $p\geq 1$ and $\alpha \geq 0$. Let $\Omega$ be a bounded $C^{1}$ domain such that $ S_{1} := \inf_{x \in \Omega} |x| >0$ and $S_{2}:= \sup_{x \in \Omega } |x| < \infty$. Then, for every $u\in W^{1,p,\alpha}(\Omega)$, there exists $\widetilde{u}\in W^{1,p,\alpha}(\mathbb{R}^{N})$ such that $\widetilde{u}=u$ a.e. in $\Omega$ and
\begin{equation*} \|\widetilde{u}\|_{W^{1,p,\alpha}(\mathbb{R}^{N})}
\leq C\|u\|_{W^{1,p,\alpha}(\Omega)},
\end{equation*}
where $C>0$ depends only on $N$, $p$, $\alpha$, and $\Omega$.
\end{theorem}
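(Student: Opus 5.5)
Since $\Omega$ is bounded and stays away from the origin, the weight $|x|^{\alpha p}$ is comparable to a constant on $\Omega$. The plan is therefore to reduce the statement to the classical (unweighted) Sobolev extension theorem for bounded $C^{1}$ domains, and then put the weight back in at the end using a cutoff. Concretely, on $\Omega$ we have
\begin{equation*}
S_{1}^{\alpha p}\le |x|^{\alpha p}\le S_{2}^{\alpha p},\qquad x\in\Omega,
\end{equation*}
because $\alpha\ge 0$ and $0<S_{1}\le |x|\le S_{2}$. It follows that $W^{1,p,\alpha}(\Omega)=W^{1,p}(\Omega)$ as sets, with
\begin{equation*}
S_{1}^{\alpha}\|u\|_{W^{1,p}(\Omega)}\le \|u\|_{W^{1,p,\alpha}(\Omega)}\le S_{2}^{\alpha}\|u\|_{W^{1,p}(\Omega)}.
\end{equation*}
Here $W^{1,p,\alpha}(\Omega)$ is understood with the norm defined as in the whole-space case, with integrals restricted to $\Omega$.

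Next I apply the classical extension theorem for bounded $C^{1}$ domains, valid for all $1\le p<\infty$ (Stein, or Evans via flattening and reflection). It gives an extension $Eu\in W^{1,p}(\mathbb{R}^{N})$ with $Eu=u$ a.e. in $\Omega$ and $\|Eu\|_{W^{1,p}(\mathbb{R}^{N})}\le C_{0}\|u\|_{W^{1,p}(\Omega)}$, where $C_{0}=C_{0}(N,p,\Omega)$. The extension need not have compact support, and the weight is unbounded at infinity. So I fix $\eta\in C_{c}^{\infty}(\mathbb{R}^{N})$ with $0\le\eta\le 1$, $\eta\equiv 1$ on $B_{S_{2}+1}$, $\operatorname{supp}\eta\subset B_{S_{2}+2}$ and $|\nabla\eta|\le 2$, and set $\widetilde u:=\eta\,Eu$. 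Then $\widetilde u=u$ a.e. in $\Omega$, and $\widetilde u$ is supported in $B_{S_{2}+2}$, where $|x|^{\alpha p}\le (S_{2}+2)^{\alpha p}$ (again using $\alpha\ge 0$). The product rule gives
\begin{equation*}
\|\widetilde u\|_{W^{1,p,\alpha}(\mathbb{R}^{N})}\le (S_{2}+2)^{\alpha}\,\|\eta Eu\|_{W^{1,p}(\mathbb{R}^{N})}\le 3(S_{2}+2)^{\alpha}C_{0}\|Eu\|_{W^{1,p}(\Omega)}\cdots
\end{equation*}
More carefully, $\|\eta Eu\|_{W^{1,p}}\le 3\|Eu\|_{W^{1,p}(\mathbb{R}^{N})}\le 3C_{0}\|u\|_{W^{1,p}(\Omega)}\le 3C_{0}S_{1}^{-\alpha}\|u\|_{W^{1,p,\alpha}(\Omega)}$. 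This is the desired estimate with $C=3C_{0}(S_{2}+2)^{\alpha}S_{1}^{-\alpha}$.

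The one point needing care is membership of $\widetilde u$ in $W^{1,p,\alpha}(\mathbb{R}^{N})$, which the paper defines as the completion of $C^{1}_{c}(\mathbb{R}^{N})$, not merely through finiteness of the norm. To handle this, I mollify: $\widetilde u_{\varepsilon}:=\rho_{\varepsilon}*\widetilde u\in C_{c}^{\infty}(B_{S_{2}+3})$ converges to $\widetilde u$ in $W^{1,p}(\mathbb{R}^{N})$. Since all supports lie in the fixed ball $B_{S_{2}+3}$, on which the weight is bounded, the convergence also holds in the weighted norm. I expect no real obstacle: the only substantive input is the classical $C^{1}$ extension theorem, and the hypotheses $0\notin\overline{\Omega}$ and $\alpha\ge0$ are exactly what make the weight harmless on $\Omega$ and on the compact support of $\widetilde u$.
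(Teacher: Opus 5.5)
Your proposal is correct and follows the same route as the paper: the bounds $S_1^{\alpha p}\le |x|^{\alpha p}\le S_2^{\alpha p}$ make the weighted and unweighted norms equivalent on $\Omega$, the classical $C^1$ extension theorem is then applied, and the weight is bounded above on a bounded set containing the support of the extension. The differences are minor. You force compact support with an explicit cutoff $\eta$, and you correctly note that $\alpha\ge 0$ means only an upper bound on the weight is needed, so the support may contain the origin; the paper instead places the support in a set $V$ avoiding $0$. Your mollification step also makes explicit that $\widetilde u$ belongs to the completion of $C^1_c(\mathbb{R}^N)$, which the paper leaves implicit.
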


\begin{proof}
Since $\Omega$ is a bounded $C^{1}$ domain  and $ \inf_{x \in \Omega} |x| >0$ and $\sup_{x \in \Omega } |x| < \infty$, the weight $|x|^{\alpha p}$ is bounded above and below by positive
constants on $\Omega$. More precisely, we have 
\begin{equation*}
    0<S^{\alpha p}_{1} \leq |x|^{\alpha p} \leq S^{\alpha p}_{2}, \quad \text{for all } x\in\Omega.
\end{equation*}
Consequently, the weighted and unweighted Sobolev norms are
equivalent on $\Omega$. In particular,
\begin{equation*}
    W^{1,p,\alpha}(\Omega)=W^{1,p}(\Omega)
\end{equation*}
with equivalent norms, and
\begin{equation*}
    \|u\|_{W^{1,p}(\Omega)}
\leq S^{-\alpha p}_{1}\|u\|_{W^{1,p,\alpha}(\Omega)}.
\end{equation*}

By the standard Sobolev extension theorem for bounded $C^{1}$
domains (see \cite[Theorem 1, Section 5]{EvansBook}), there exists an
extension
\begin{equation*}
    \widetilde{u}\in W^{1,p}(\mathbb{R}^{N})
\end{equation*}
such that $\widetilde{u}=u$ a.e. in $\Omega$ and
\begin{equation*} \|\widetilde{u}\|_{W^{1,p}(\mathbb{R}^{N})}
\leq C\|u\|_{W^{1,p}(\Omega)}.
\end{equation*}
Moreover, since $\overline{\Omega}$ is compact and
$0\notin\overline{\Omega}$, the extension can be chosen to have
compact support in a bounded open set $V$ satisfying
\begin{equation*}
\overline{\Omega}\subset V \subset \subset \mathbb{R}^{N}\setminus\{0\}.
\end{equation*}
In particular,
\begin{equation*}
\operatorname{supp}\widetilde{u}\subset V.
\end{equation*}
For example, one may establish the extension in such a way that $V= \Omega \cup \left( \cup_{x \in \partial \Omega} B_{S_{1}/2}(x) \right)$.  Since $\overline{V}$ is compact and $0\notin\overline{V}$, there
exist constants $0<c_1<c_2<\infty$ such that
\begin{equation*}
    c_1\leq |x|^{\alpha p}\leq c_2
\qquad\text{for all }x\in V.
\end{equation*}
Hence $|x|^{\alpha p}$ is bounded above and below by positive constants
on $V$. Therefore,
\begin{equation*}
\|\widetilde{u}\|_{W^{1,p,\alpha}(\mathbb{R}^{N})}
=\|\widetilde{u}\|_{W^{1,p,\alpha}(V)} \leq C\|\widetilde{u}\|_{W^{1,p}(V)} = C\|\widetilde{u}\|_{W^{1,p}(\mathbb{R}^{N})}.
\end{equation*}
Combining the above estimates with the equivalence of the weighted
and unweighted norms on $\Omega$, we obtain
\begin{equation*}
\|\widetilde{u}\|_{W^{1,p,\alpha}(\mathbb{R}^{N})}
\leq C\|u\|_{W^{1,p,\alpha}(\Omega)}.
\end{equation*}
Thus, $\widetilde{u}$ is the desired extension, and the proof is
complete.
\end{proof}

Caffarelli, Kohn, and Nirenberg in \cite{Caffarelli1984} studied first-order interpolation inequalities in Sobolev spaces with weights, now known as the Caffarelli--Kohn--Nirenberg inequality, and proved the following theorem. Let $N \geq 1$, $p\geq 1$, $q \geq 1$, $\tau >0$, $a \in [0,1]$, and $\alpha, \beta, \gamma \in \mathbb{R}$ satisfy
\begin{equation*}
    \frac{1}{\tau}+ \frac{\gamma}{N} = a \left( \frac{1}{p} + \frac{\alpha-1}{N} \right) + (1-a) \left( \frac{1}{q} + \frac{\beta}{N} \right)  \ .
\end{equation*}
If $a>0$, assume also that, with $\gamma = a \sigma + (1-a) \beta$,
\begin{equation*}
    0 \leq \alpha - \sigma
\end{equation*}
and 
\begin{equation*}
    \alpha - \sigma \leq 1 \hspace{.5cm} \text{if} \hspace{.5cm} \frac{1}{\tau}+ \frac{\gamma}{N} = \frac{1}{p}+ \frac{\alpha-1}{N}  .
\end{equation*}
If $\frac{1}{\tau} + \frac{\gamma}{N} >0$, then we have
\begin{equation}\label{CKN ineq}
  \| |x|^{\gamma}u \|_{L^{\tau}(\mathbb{R}^N)} \leq C \||x|^{\alpha} |\nabla u|\|^{a}_{L^{p}(\mathbb{R}^N)} \||x|^{\beta} u \|^{(1-a)}_{L^{q} (\mathbb{R}^N)},  \hspace{.3cm}  \forall \ u  \in C^{\infty}_{c}(\mathbb{R}^N)  .
      \end{equation}
\smallskip

Now, if we take $\gamma=0$ and $a=1$, the Caffarelli--Kohn--Nirenberg inequality \eqref{CKN ineq} reduces to  
\begin{equation}  \label{sobolev inequality}
\left( \int_{\mathbb{R}^{N}} |u(x)|^{\tau} \, dx \right)^{\!\frac{1}{\tau}}
 \leq C \left( \int_{\mathbb{R}^{N}} |\nabla u(x)|^{p} |x|^{\alpha p} \, dx \right)^{\!\frac{1}{p}},
\end{equation}
under the conditions
\begin{equation*}
 0 \leq \alpha < 1, \qquad p- \alpha p < N, \qquad \text{and} \qquad 
   \tau = \frac{Np}{N-p+ \alpha p}.  
\end{equation*}
The inequality \eqref{sobolev inequality} is nothing but the Sobolev inequality in weighted spaces, with the critical exponent
\begin{equation*}
    p^{*}_{\alpha} = \tau = \frac{Np}{N-p+ \alpha p}, \qquad \text{if} \quad 0< p- \alpha p < N.
\end{equation*}
We can therefore state the following fundamental result.

\begin{lemma}[Weighted Sobolev inequality]\label{Lemma: Weighted Sobolev inequality}
    Let $p\geq 1$,  $\alpha \in [0, 1) $ be such that $0<p-\alpha p < N$
    and $p^{*}_{\alpha}= \frac{Np}{N-p+\alpha p}$. Then for all $u \in W^{1,p, \alpha}(\mathbb{R}^{N})$, there exists a constant $C=C(N,p,\alpha)>0$ such that
    \begin{align}
        \left( \int_{\mathbb{R}^{N}} |u(x)|^{p^{*}_{\alpha}} \, dx \right)^{\!\frac{1}{p^{*}_{\alpha}}}
        \leq C \left( \int_{\mathbb{R}^{N}} |\nabla u(x)|^{p} |x|^{\alpha p}  \, dx \right)^{\!\frac{1}{p}}   .
    \end{align}
\end{lemma}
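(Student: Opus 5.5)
The plan is to read the lemma as the special case $\gamma=0$, $a=1$ of the Caffarelli--Kohn--Nirenberg inequality \eqref{CKN ineq}, verify its hypotheses, and then pass from $C_c^\infty$ functions to all of $W^{1,p,\alpha}(\mathbb{R}^N)$ by density.

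\textbf{Checking the CKN conditions.} With $a=1$, the balance condition becomes
\[
\frac1\tau=\frac1p+\frac{\alpha-1}{N}=\frac{N-p+\alpha p}{Np}.
\]
This gives exactly $\tau=p^{*}_{\alpha}$. The parameters $q$ and $\beta$ do not enter, because $1-a=0$.
\begin{itemize}
\item Since $\gamma=a\sigma+(1-a)\beta$, we get $\sigma=0$.
\item The constraint $0\le\alpha-\sigma$ is then just $\alpha\ge0$.
\item We are in the equality case $\frac1\tau+\frac\gamma N=\frac1p+\frac{\alpha-1}N$, so the extra constraint $\alpha-\sigma\le1$ is also needed. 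It holds because $\alpha<1$.
\item Finally, $\frac1\tau+\frac\gamma N=\frac{N-p+\alpha p}{Np}>0$ holds precisely because $p-\alpha p<N$.
\end{itemize}
So \eqref{sobolev inequality} holds for every $u\in C_c^\infty(\mathbb{R}^N)$ with a constant $C=C(N,p,\alpha)$.

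\textbf{Passing to $C_c^1$.} Take $u\in C_c^1$ and mollify to get $u_\varepsilon=u*\rho_\varepsilon\in C_c^\infty$. The supports stay in a fixed ball, $\nabla u_\varepsilon\to\nabla u$ uniformly, and $|x|^{\alpha p}$ is bounded on that ball. Hence both sides converge, and the inequality holds on $C_c^1$.

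\textbf{Extending to the completion.} Let $u\in W^{1,p,\alpha}(\mathbb{R}^N)$ and choose $u_k\in C_c^1$ with $u_k\to u$ in $\|\cdot\|_{W^{1,p,\alpha}}$.
\begin{itemize}
\item The inequality on $C_c^1$, applied to $u_k-u_j$, shows that $(u_k)$ is Cauchy in $L^{p^*_\alpha}$. So $u_k\to v$ in $L^{p^*_\alpha}$ for some $v$.
\item Also $u_k\to u$ in $L^{p,\alpha}$. Passing to subsequences converging almost everywhere, both limits coincide, so $v=u$ a.e.
\item The weighted gradients $\nabla u_k$ converge in $L^p(|x|^{\alpha p}dx)$ to the weak gradient of $u$.
\end{itemize}
Letting $k\to\infty$ in $\|u_k\|_{L^{p^*_\alpha}}\le C\|\nabla u_k\|_{L^{p,\alpha}}$ gives the claim.

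The main obstacle is the bookkeeping in the CKN hypotheses, in particular the borderline equality case where $\alpha-\sigma\le1$ is required. This is exactly where the restriction $\alpha<1$ is used.
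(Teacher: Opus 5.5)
Your proposal is correct and follows the paper's route: the paper also obtains the lemma by specializing the Caffarelli--Kohn--Nirenberg inequality to $\gamma=0$, $a=1$, which yields the conditions $0\le\alpha<1$, $p-\alpha p<N$, and $\tau=p^{*}_{\alpha}$. Your explicit check of the borderline constraint $\alpha-\sigma\le 1$ and your density passage from $C_c^\infty$ to $C_c^1$ and then to the completion $W^{1,p,\alpha}(\mathbb{R}^N)$ supply details the paper leaves implicit; the one point to phrase more carefully is that, for an element of the completion, $\nabla u$ is by definition the $L^{p,\alpha}$-limit of $\nabla u_k$ (identifying it with a weak gradient on all of $\mathbb{R}^N$ needs care near the origin), and that limit is all the inequality requires.
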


The next theorem establishes the weighted Sobolev inequality on bounded $C^{1}$ domains with $0 \notin \overline{\Omega}$. It follows by combining the weighted Sobolev extension theorem with the weighted Sobolev inequality on $\mathbb{R}^{N}$.

\begin{theorem}\label{Theorem: Weighted Sobolev inequality on bounded domain}
    Let $p\geq 1$,  $\alpha \in [0, 1) $ be such that $0<p-\alpha p < N$
    and $p^{*}_{\alpha}= \frac{Np}{N-p+\alpha p}$. Let $\Omega$ be a bounded  $C^{1}$ domain and assume that $0 \notin \overline{\Omega}$. Then there exists a constant $C=C(N,p,\alpha, \Omega)>0$ such that
    \begin{equation}
  \|u\|_{L^{p^{*}_{\alpha}}(\Omega)}
        \leq C \|u\|_{W^{1,p, \alpha}(\Omega)}, \quad \forall \, u \in W^{1,p, \alpha}(\Omega) .
    \end{equation}
\end{theorem}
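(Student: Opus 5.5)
The plan is to combine the weighted Sobolev extension theorem (Theorem \ref{Theorem: Weighted Local Extension theorem}) with the weighted Sobolev inequality on $\mathbb{R}^N$ (Lemma \ref{Lemma: Weighted Sobolev inequality}). Fix $u\in W^{1,p,\alpha}(\Omega)$. Since $0\notin\overline{\Omega}$ and $\Omega$ is bounded, we have $S_1=\inf_{\Omega}|x|>0$ and $S_2=\sup_\Omega|x|<\infty$, so the hypotheses of Theorem \ref{Theorem: Weighted Local Extension theorem} hold. That theorem gives an extension $\widetilde{u}\in W^{1,p,\alpha}(\mathbb{R}^N)$ with $\widetilde u=u$ a.e. in $\Omega$ and $\|\widetilde u\|_{W^{1,p,\alpha}(\mathbb{R}^N)}\le C\|u\|_{W^{1,p,\alpha}(\Omega)}$.

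Next, apply Lemma \ref{Lemma: Weighted Sobolev inequality} to $\widetilde u$. Because $\widetilde u$ agrees with $u$ on $\Omega$,
\begin{equation*}
\|u\|_{L^{p^*_\alpha}(\Omega)}\le \|\widetilde u\|_{L^{p^*_\alpha}(\mathbb{R}^N)}\le C\Big(\int_{\mathbb{R}^N}|\nabla\widetilde u|^p|x|^{\alpha p}\,dx\Big)^{1/p}\le C\|\widetilde u\|_{W^{1,p,\alpha}(\mathbb{R}^N)}\le C'\|u\|_{W^{1,p,\alpha}(\Omega)}.
\end{equation*}
The constant $C'$ depends on $N,p,\alpha$ (through the Caffarelli--Kohn--Nirenberg constant) and on $\Omega$ (through the extension constant and the bounds $S_1,S_2$).

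The only step needing care is applying Lemma \ref{Lemma: Weighted Sobolev inequality} to $\widetilde u$, since the lemma is derived from \eqref{CKN ineq}, which is stated for $C_c^\infty$ functions. I would argue by density. $\widetilde u$ has compact support in a set $V\subset\subset\mathbb{R}^N\setminus\{0\}$, and on a neighbourhood of $\overline V$ the weight $|x|^{\alpha p}$ is bounded above and below. So mollifications $\widetilde u_\varepsilon\in C_c^\infty$ converge to $\widetilde u$ in $W^{1,p}$ and hence in the weighted norm. Applying the inequality to $\widetilde u_\varepsilon$ shows they form a Cauchy sequence in $L^{p^*_\alpha}$; a subsequence converges a.e. to $\widetilde u$, and Fatou's lemma, or identification of the $L^{p^*_\alpha}$ limit, passes the inequality to $\widetilde u$. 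Alternatively, one can note that $W^{1,p,\alpha}(\mathbb{R}^N)$ is defined as a completion of $C_c^1$, so the lemma extends by continuity.
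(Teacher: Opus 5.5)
Your proposal is correct and is essentially the paper's proof: extend $u$ by the weighted extension theorem, apply the weighted (Caffarelli--Kohn--Nirenberg) Sobolev inequality on $\mathbb{R}^N$ to $\widetilde u$, bound the weighted gradient seminorm by the full $W^{1,p,\alpha}$ norm, and restrict back to $\Omega$. Your density argument for applying the $\mathbb{R}^N$ inequality to $\widetilde u$ is sound; it uses that $\widetilde u$ is supported away from the origin, where the weight is comparable to a constant, and it justifies a step the paper takes for granted.
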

\begin{proof}
Let $\Omega\subset\mathbb{R}^N$ be a bounded $C^{1}$ domain with $0 \notin \overline{\Omega}$, and let $u\in W^{1,p,\alpha}(\Omega)$. By Theorem \ref{Theorem: Weighted Local Extension theorem}, there exists an extension $\widetilde{u}\in W^{1,p,\alpha}(\mathbb{R}^N)$ such that 
\begin{equation*}
        \widetilde{u}=u \qquad\text{a.e. in }\Omega,
    \end{equation*}
and
\begin{equation*}
        \| \widetilde{u} \|_{W^{1,p, \alpha}(\mathbb{R}^{N})} \leq C  \| u \|_{W^{1,p, \alpha}(\Omega)}, 
\end{equation*}
where $C=C(N,p,\alpha, \Omega)>0$.  On the other hand, by Lemma \ref{Lemma: Weighted Sobolev inequality}, there exists a constant  $C=C(N,p, \alpha)>0$ such that
    \begin{equation*}
\|\widetilde{u}\|_{L^{p^{*}_{\alpha}}(\mathbb{R}^{N})}  \leq  C  \| \nabla \widetilde{u} \|_{L^{p,\alpha}(\mathbb{R}^{N})},
    \end{equation*}
    where $p^{*}_{\alpha} = \frac{Np}{N-p+\alpha p}$. Since  $\| \nabla \widetilde{u}\|_{L^{p,\alpha}(\mathbb{R}^N)} \leq \| \widetilde{u}\|_{W^{1,p,\alpha}(\mathbb{R}^N)}$, we have 
    \begin{equation*} \|\widetilde{u}\|_{L^{p^{*}_{\alpha}}(\mathbb{R}^N)} \leq C \|\widetilde{u}\|_{W^{1,p,\alpha}(\mathbb{R}^N)}. \end{equation*}
Combining this with the extension estimate yields 
\begin{equation*} \|\widetilde{u}\|_{L^{p^{*}_{\alpha}}(\mathbb{R}^N)} \leq C \|u\|_{W^{1,p,\alpha}(\Omega)}. \end{equation*} 
Finally, using $\|u\|_{L^{p^{*}_{\alpha}}(\Omega)} \leq \|\widetilde{u}\|_{L^{p^{*}_{\alpha}}(\mathbb{R}^N)}$, we have
    \begin{equation*}
\|u\|_{L^{p^{*}_{\alpha}}(\Omega)}  \leq  C \| u \|_{W^{1,p, \alpha}(\Omega)}.
    \end{equation*}
 This completes the proof.
\end{proof}

The next lemma proves the weighted Sobolev Poincar\'e inequality, which bounds the weighted $L^{p, \alpha}$-norm of $u-(u)_{\Omega}$ by the weighted gradient seminorm of $u$. Here, $(u)_{\Omega}$ denotes the average of $u$ over $\Omega$, i.e.,
\begin{equation}\label{Defn: average of u}
    (u)_{\Omega} := \frac{1}{|\Omega|} \int_{\Omega} u(x) \, dx = \fint_{\Omega} u(x) \, dx,
\end{equation}
where $|\Omega|$ is the Lebesgue measure of $\Omega$.

\begin{lemma}[Weighted Sobolev Poincar\'e inequality]\label{Lemma: Weighted Sobolev Poincare}
Let $p\geq 1$, $ \alpha \geq 0$, and  let $\Omega$ be a bounded $C^{1}$ domain with $S_{1} = \inf_{x \in \Omega} |x|>0$ and $S_{2} = \sup_{x \in \Omega} |x|<\infty$. Then for all $u \in W^{1,p, \alpha}(\Omega)$, there exists a constant $C=C(N,p,  \Omega)>0$ such that
 \begin{equation}
       \int_{\Omega}  |u(x) - (u)_{\Omega}|^{p} |x|^{\alpha p}  \, dx
        \leq C \left(  \frac{S^{\alpha p}_{2}}{S^{\alpha p}_{1}} \right)  \int_{\Omega} |\nabla u(x)|^{p} |x|^{\alpha p}
 \,dx.
    \end{equation}
\end{lemma}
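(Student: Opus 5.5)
The plan is to reduce the estimate to the classical unweighted Poincaré inequality on $\Omega$, using that the weight $|x|^{\alpha p}$ is comparable to a constant on $\Omega$. Since $\alpha\ge 0$ and $0<S_1\le |x|\le S_2<\infty$ for $x\in\Omega$, we have the two-sided bound
\begin{equation*}
S_1^{\alpha p}\le |x|^{\alpha p}\le S_2^{\alpha p}, \qquad x\in\Omega .
\end{equation*}
As in the proof of Theorem \ref{Theorem: Weighted Local Extension theorem}, this gives $W^{1,p,\alpha}(\Omega)=W^{1,p}(\Omega)$ with equivalent norms. In particular, $u\in L^p(\Omega)\subset L^1(\Omega)$, so $(u)_\Omega$ is well defined.

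The key steps, in order, are as follows. First, bound the weighted left-hand side by the unweighted one using the upper bound on the weight:
\begin{equation*}
\int_\Omega |u-(u)_\Omega|^p|x|^{\alpha p}\,dx\le S_2^{\alpha p}\int_\Omega |u-(u)_\Omega|^p\,dx .
\end{equation*}
Second, invoke the classical Poincaré--Wirtinger inequality on the bounded connected $C^1$ (hence Lipschitz, extension) domain $\Omega$:
\begin{equation*}
\int_\Omega |u-(u)_\Omega|^p\,dx\le C(N,p,\Omega)\int_\Omega|\nabla u|^p\,dx,
\end{equation*}
valid for all $p\ge1$ (see e.g.\ \cite{EvansBook}). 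Third, reinsert the weight using the lower bound:
\begin{equation*}
\int_\Omega|\nabla u|^p\,dx\le S_1^{-\alpha p}\int_\Omega|\nabla u|^p|x|^{\alpha p}\,dx .
\end{equation*}
Chaining the three inequalities yields the claimed estimate with the explicit factor $S_2^{\alpha p}/S_1^{\alpha p}$, and with $C$ depending only on $N,p,\Omega$.

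There is no real obstacle here; the only point needing care is the second step. Its standard proof is by compactness, via the Rellich--Kondrachov theorem together with connectedness of $\Omega$, which is why $\Omega$ must be a domain, i.e.\ connected. That is the step I would cite rather than reprove. One should also remark that elements of $W^{1,p,\alpha}(\Omega)$ are genuinely in $W^{1,p}(\Omega)$, so that the classical inequality applies. This follows from the norm equivalence above: the constant $S_1^{-\alpha p}$ controls the unweighted norm by the weighted one.
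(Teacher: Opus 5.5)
Your proposal is correct and follows the paper's proof essentially step for step. The paper also bounds the weight above by $S_2^{\alpha p}$, applies the classical Poincaré--Wirtinger inequality from \cite{EvansBook} on the bounded connected $C^1$ domain, and then reinserts the weight using $S_1^{-\alpha p}$.
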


\begin{proof}
    Since $\Omega$ is a bounded $C^{1}$ domain  and $ \inf_{x \in \Omega} |x| >0$ and $\sup_{x \in \Omega } |x| < \infty$. More precisely, 
    \begin{equation}\label{ineq10}
       0< S^{\alpha p}_{1}<|x|^{\alpha p }< S^{\alpha p}_{2}, \quad \text{for all } x\in\Omega .
    \end{equation}
    Consequently, the weighted and unweighted Sobolev norms are
equivalent on $\Omega$. In particular,
\begin{equation*}
    W^{1,p,\alpha}(\Omega)=W^{1,p}(\Omega).
\end{equation*}
By \cite[Theorem $1$, Section $5.8.1$]{EvansBook}, we have
\begin{equation*}
     \int_{\Omega}  |u(x) - (u)_{\Omega}|^{p}  \, dx  \leq C\int_{\Omega} |\nabla u(x)|^{p} \,dx, \quad \text{for all } u \in W^{1,p}(\Omega).
\end{equation*}
Using the estimate \eqref{ineq10} and the above Poincar\'e inequality, we obtain
\begin{align*}
    \int_{\Omega}  |u(x) - (u)_{\Omega}|^{p} |x|^{\alpha p}  \, dx & \leq S^{\alpha p}_{2} \int_{\Omega}  |u(x) - (u)_{\Omega}|^{p}  \, dx \\ & \leq  C  S^{\alpha p}_{2} \int_{\Omega} |\nabla u(x)|^{p} \,dx \leq C \left(  \frac{S^{\alpha p}_{2}}{S^{\alpha p}_{1}} \right) \int_{\Omega} |\nabla u(x)|^{p} |x|^{\alpha p} \,dx.
\end{align*}
This completes the proof of the lemma.
\end{proof}

The next lemma establishes a scale-invariant weighted Poincar\'e--Sobolev inequality on annuli. In particular, it gives a uniform estimate on $\Omega_\lambda$ for every $\lambda>0$ by exploiting the scaling properties of the weighted gradient seminorm.

\begin{lemma}\label{Lemma: Weighted Sobolev ineq Lambda}
Let $p\geq1$, $\alpha \in [0,1)$ be such that $0<p-\alpha p<N$. Let $\lambda>0$ and set $\Omega_{\lambda}:=\{x\in\mathbb{R}^N:\lambda<|x|<n \lambda\}$, where $n \in \mathbb{N}$ and $n>1$, 
and define $p^*_{\alpha}:=\frac{Np}{N-p+\alpha p}$.
Then there exists a constant $C=C(N,p,s,\alpha, n)>0$ such that, for every $u\in W^{1,p,\alpha}(\Omega_\lambda)$,
\begin{align*}
 \left( \frac{1}{|\Omega_{\lambda}|} \int_{\Omega_{\lambda}}  |u(x)-(u)_{\Omega_\lambda}|^{p^*_{\alpha}} \, dx \right)^{\frac{1}{p^{*}_{\alpha}}} 
\leq
C\left(
\lambda^{p-\alpha p-N}
\int_{\Omega_\lambda} |\nabla u(x)|^{p} |x|^{\alpha p} \, dx
\right)^{\frac{1}{p}}.  
\end{align*}
\end{lemma}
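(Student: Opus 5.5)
The plan is to reduce to the fixed annulus $\Omega_1=\{1<|x|<n\}$ by scaling. On $\Omega_1$ we can apply Theorem \ref{Theorem: Weighted Sobolev inequality on bounded domain} together with Lemma \ref{Lemma: Weighted Sobolev Poincare}. The constant on $\Omega_1$ depends only on $N,p,\alpha,n$, and the power $\lambda^{p-\alpha p-N}$ should come out exactly from the change of variables.

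\textbf{Step 1 (the case $\lambda=1$).} $\Omega_1$ is a bounded smooth domain with $0\notin\overline{\Omega_1}$, and $1\le |x|\le n$ there. Let $v\in W^{1,p,\alpha}(\Omega_1)$ and put $w:=v-(v)_{\Omega_1}$, so $w\in W^{1,p,\alpha}(\Omega_1)$ and $\nabla w=\nabla v$. Theorem \ref{Theorem: Weighted Sobolev inequality on bounded domain} gives
\begin{equation*}
\|w\|_{L^{p^*_\alpha}(\Omega_1)}^p\le C\Big(\int_{\Omega_1}|w|^p|x|^{\alpha p}\,dx+\int_{\Omega_1}|\nabla v|^p|x|^{\alpha p}\,dx\Big).
\end{equation*}
Lemma \ref{Lemma: Weighted Sobolev Poincare}, with $S_1=1$ and $S_2=n$, bounds the first term by $Cn^{\alpha p}\int_{\Omega_1}|\nabla v|^p|x|^{\alpha p}\,dx$. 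Dividing by $|\Omega_1|^{1/p^*_\alpha}$, a constant depending on $N$ and $n$, we get
\begin{equation*}
\Big(\fint_{\Omega_1}|v-(v)_{\Omega_1}|^{p^*_\alpha}\,dx\Big)^{1/p^*_\alpha}\le C\Big(\int_{\Omega_1}|\nabla v|^p|x|^{\alpha p}\,dx\Big)^{1/p}.
\end{equation*}

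\textbf{Step 2 (scaling).} For $u\in W^{1,p,\alpha}(\Omega_\lambda)$, set $v(y):=u(\lambda y)$ for $y\in\Omega_1$. Then $(v)_{\Omega_1}=(u)_{\Omega_\lambda}$. Averages are invariant under dilation, so the left-hand side for $v$ equals the left-hand side for $u$ on $\Omega_\lambda$. For the gradient term, $\nabla v(y)=\lambda\nabla u(\lambda y)$, and substituting $x=\lambda y$ (so $dy=\lambda^{-N}dx$, $|y|^{\alpha p}=\lambda^{-\alpha p}|x|^{\alpha p}$) gives
\begin{equation*}
\int_{\Omega_1}|\nabla v(y)|^p|y|^{\alpha p}\,dy=\lambda^{p-\alpha p-N}\int_{\Omega_\lambda}|\nabla u(x)|^p|x|^{\alpha p}\,dx.
\end{equation*}
Inserting this into Step 1 yields the claimed inequality with a constant independent of $\lambda$.

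\textbf{Expected obstacle.} The only delicate point is in Step 1: applying the earlier results requires the inhomogeneous $W^{1,p,\alpha}$ norm, so the mean must be subtracted first and the lower-order term absorbed via Poincar\'e. One also needs $v\in W^{1,p,\alpha}(\Omega_1)$ whenever $u\in W^{1,p,\alpha}(\Omega_\lambda)$, which holds because the weight is comparable to a constant on each annulus.
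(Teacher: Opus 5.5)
Your proposal is correct and matches the paper's proof essentially step for step. For $\lambda=1$ you apply the bounded-domain weighted Sobolev inequality to $u-(u)_{\Omega_1}$ and absorb the lower-order term with the weighted Poincar\'e lemma ($S_1=1$, $S_2=n$); the dilation $v(y)=u(\lambda y)$ then produces exactly the factor $\lambda^{p-\alpha p-N}$.

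One caveat applies equally to your argument and the paper's: the Poincar\'e step needs $\Omega_1$ to be connected, i.e.\ $N\ge 2$. For $N=1$ the set $\{1<|x|<n\}$ is two intervals, and a function equal to $\pm1$ on them violates the inequality.
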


\begin{proof}
  First assume $\lambda=1$. By Theorem \ref{Theorem: Weighted Sobolev inequality on bounded domain}, we have
  \begin{equation*}
       \|u\|_{L^{p^{*}_{\alpha}}(\Omega_{1})}
        \leq C \|u\|_{W^{1,p, \alpha}(\Omega_{1})},
  \end{equation*}
  where $C=C(N,p, \alpha, n)>0$. Applying the above inequality with $u-(u)_{\Omega_{1}}$ and using Lemma \ref{Lemma: Weighted Sobolev Poincare} with $\Omega=\Omega_{1}$, $S_{1}=1$ and $S_{2}=n$, we obtain
  \begin{equation*}
    \left(  \int_{\Omega_{1}} |u(x)-(u)_{\Omega_{1}}|^{p^{*}_{\alpha}} \, dx \right)^{\frac{1}{p^{*}_{\alpha}}} \leq C n^{\alpha p} \left( \int_{\Omega_{1}} |\nabla u(x)|^{p} |x|^{\alpha p} \, dx \right)^{\frac{1}{p}}. 
  \end{equation*}
Dividing both sides by $|\Omega_1|^{1/p^*_{\alpha}}$ and absorbing
the fixed factor into the constant, we obtain
 \begin{equation*}
    \left( \frac{1}{|\Omega_{1}|}  \int_{\Omega_{1}} |u(x)-(u)_{\Omega_{1}}|^{p^{*}_{\alpha}} \, dx \right)^{\frac{1}{p^{*}_{\alpha}}} \leq C \left( \int_{\Omega_{1}} |\nabla  u( x)|^{p} |x|^{\alpha p} \, dx \right)^{\frac{1}{p}}. 
  \end{equation*}
Now, let $\lambda>0$ and apply the above inequality to
$v(x)=u(\lambda x)$. Since
\begin{equation*}
    \frac{1}{|\Omega_1|}
\int_{\Omega_1}u(\lambda y)\,dy
=
\frac{1}{|\Omega_\lambda|}
\int_{\Omega_\lambda}u(y)\,dy
=
(u)_{\Omega_\lambda},
\end{equation*}
we obtain
\begin{equation*}
    \left( \frac{1}{|\Omega_{1}|}  \int_{\Omega_{1}} |u(\lambda x)-(u)_{\Omega_{\lambda}}|^{p^{*}_{\alpha}} \, dx \right)^{\frac{1}{p^{*}_{\alpha}}} \leq C \left(  \lambda^{p} \int_{\Omega_{1}} |\nabla u(\lambda x)|^{p} |x|^{\alpha p} \, dx \right)^{\frac{1}{p}}. 
  \end{equation*}
Using the changes of variable $X=\lambda x$ on both sides, we have
  \begin{align*}
 \left( \frac{1}{|\Omega_{\lambda}|} \int_{\Omega_{\lambda}}  |u(x)-(u)_{\Omega_\lambda}|^{p^*_{\alpha}} \, dx \right)^{\frac{1}{p^{*}_{\alpha}}}
\leq
C\left(
\lambda^{p-\alpha p-N}
\int_{\Omega_\lambda} |\nabla u( x)|^{p} |x|^{\alpha p} \, dx
\right)^{\frac{1}{p}}.  
\end{align*}
This completes
the proof.
\end{proof}

The next lemma provides an estimate for the difference between the averages of a function over two disjoint sets in terms of its mean oscillation over their union. For the proof, we refer to \cite[Lemma $3.3$]{BanerjeeGangulySahu}

\begin{lemma}\label{Lemma: on two disjoint set}
    Let $E$ and $F$ be two disjoint sets in $\mathbb{R}^{N}$ and $q \geq 1$. Then 
    \begin{equation}
        |(u)_{E} - (u)_{F}|^{q} \leq 2^{q} \frac{|E \cup F|}{\min \{ |E|, |F| \} }  \fint_{E \cup F} |u(x)-(u)_{E \cup F}|^{q} \, dx  . 
    \end{equation}
\end{lemma}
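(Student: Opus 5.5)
The plan is to write the difference of averages as an average of differences and then apply Jensen's inequality. Write $G:=E\cup F$ and $m:=(u)_{G}$. Since $E$ and $F$ are disjoint, the triangle inequality gives
\begin{equation*}
|(u)_{E}-(u)_{F}| \leq |(u)_{E}-m| + |(u)_{F}-m|.
\end{equation*}
Convexity of $t\mapsto t^{q}$ for $q\geq 1$ then yields
\begin{equation*}
|(u)_{E}-(u)_{F}|^{q} \leq 2^{q-1}\bigl(|(u)_{E}-m|^{q}+|(u)_{F}-m|^{q}\bigr).
\end{equation*}

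Each of the two terms is handled by Jensen's (or H\"older's) inequality on the smaller set, and then the domain of integration is enlarged to $G$. Since $(u)_{E}-m=\fint_{E}(u-m)\,dx$, we get
\begin{equation*}
|(u)_{E}-m|^{q} \leq \fint_{E}|u-m|^{q}\,dx \leq \frac{1}{|E|}\int_{G}|u-m|^{q}\,dx = \frac{|G|}{|E|}\fint_{G}|u-m|^{q}\,dx.
\end{equation*}
The same bound holds for $F$. Bounding both $\frac{|G|}{|E|}$ and $\frac{|G|}{|F|}$ by $\frac{|G|}{\min\{|E|,|F|\}}$ and summing the two terms gives the constant $2^{q-1}\cdot 2=2^{q}$, which is exactly the claimed estimate.

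There is no genuine obstacle here; the lemma is elementary. The only care needed is to assume implicitly that $0<|E|,|F|<\infty$ and $u\in L^{q}(E\cup F)$, so that all the averages are finite.
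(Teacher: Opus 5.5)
Your proof is correct. The paper does not prove this lemma itself but cites \cite[Lemma 3.3]{BanerjeeGangulySahu}; your route is the standard argument and reproduces exactly the stated constant $2^{q}$: you pass through $(u)_{E\cup F}$, apply $(a+b)^{q}\le 2^{q-1}(a^{q}+b^{q})$, then use Jensen on $E$ and on $F$ and enlarge each integral to $E\cup F$.

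A minor remark: your argument never actually uses disjointness. It would be needed only to sharpen the constant to $2^{q-1}$, via $\frac{1}{|E|}\int_{E}|u-m|^{q}+\frac{1}{|F|}\int_{F}|u-m|^{q}\le \frac{1}{\min\{|E|,|F|\}}\int_{E\cup F}|u-m|^{q}$.
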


\subsection{Weighted Hardy inequalities with a remainder term} 
We now establish a weighted Hardy inequality with a remainder term. This remainder term will be useful in the proof of quantitative stability, as it allows us to reduce the problem to estimating the remainder term.

\smallskip

We first recall the following convexity estimate, valid for all $X,Y\in\mathbb{R}^{N}$ and $p\geq2$ (see \cite[Inequality $(2.13)$]{frank2008}):
\begin{equation}\label{Convex Estimate}
|X+Y|^{p}
\geq
|X|^{p}
+
p|X|^{p-2}X\cdot Y
+
c_{p}|Y|^{p},
\end{equation}
where
\begin{equation*}
    c_{p}:= \min_{0 < \tau<1/2} \left( (1-\tau)^{p}  - \tau^{p}  + p \tau^{p-1} \right) .
\end{equation*}
Applying this convexity estimate yields the following remainder estimate for $p\geq2$.

\begin{lemma}\label{Lemma: weighted Hardy remainder p geq 2}
Let $p \geq 2$ and $\alpha \in [0,1)]$ be such that $0<p-\alpha p<N$. Then for any $u \in W^{1,p, \alpha}(\mathbb{R}^{N})$,
\begin{align} \int_{\mathbb{R}^{N}}  |\nabla u(x)|^{p} |x|^{\alpha p} \, dx -  \left( \frac{N-p+\alpha p}{p} \right)^{p} \int_{\mathbb{R}^N} \frac{|u(x)|^{p}}{|x|^{p- \alpha p}} \, dx  \geq c_{p} \int_{\mathbb{R}^{N}} \frac{|\nabla v(x)|^{p}}{|x|^{N-p}}  \, dx,
\end{align}
where $v(x) = u(x) v^{-1}_{\alpha}(x)$, and $v_{\alpha}(x) = |x|^{-\frac{N-p+\alpha p}{p}}$. \end{lemma}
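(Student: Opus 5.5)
By density it suffices to take $u\in C^{1}_{c}(\mathbb{R}^{N})$; the general case follows by approximation in $W^{1,p,\alpha}(\mathbb{R}^N)$ and Fatou's lemma on the right-hand side. The plan is a ground-state substitution: write $u=v\,v_{\alpha}$, so that on $\mathbb{R}^N\setminus\{0\}$
\begin{equation*}
\nabla u = v\,\nabla v_{\alpha} + v_{\alpha}\nabla v .
\end{equation*}
Apply the convexity estimate \eqref{Convex Estimate} pointwise with $X=v\nabla v_\alpha$ and $Y=v_\alpha\nabla v$, then multiply by $|x|^{\alpha p}$. This gives
\begin{equation*}
|\nabla u|^p|x|^{\alpha p}\ge |v|^p|\nabla v_\alpha|^p|x|^{\alpha p} + p\,|v|^{p-2}v\,v_\alpha|\nabla v_\alpha|^{p-2}\nabla v_\alpha\cdot\nabla v\,|x|^{\alpha p} + c_p\, v_\alpha^p|\nabla v|^p|x|^{\alpha p}.
\end{equation*}
Here we use that $v_\alpha>0$ and $|X|^{p-2}X=|v|^{p-2}v|\nabla v_\alpha|^{p-2}\nabla v_\alpha$.

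The last term is exactly the claimed remainder. Since $v_\alpha^p|x|^{\alpha p}=|x|^{-(N-p+\alpha p)+\alpha p}=|x|^{p-N}$, it equals $c_p|\nabla v|^p|x|^{-(N-p)}$.

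The middle term must cancel the excess of the first term over the Hardy term. Note $p|v|^{p-2}v\nabla v=\nabla(|v|^p)$, so the middle term is $v_\alpha|\nabla v_\alpha|^{p-2}\nabla v_\alpha\,|x|^{\alpha p}\cdot\nabla(|v|^p)$. Integrating over $\{|x|>\varepsilon\}$ and integrating by parts, we use the Euler--Lagrange equation \eqref{Euler-Lagrange weighted Hardy} together with
\begin{equation*}
\operatorname{div}\bigl(v_\alpha|x|^{\alpha p}|\nabla v_\alpha|^{p-2}\nabla v_\alpha\bigr)=|x|^{\alpha p}|\nabla v_\alpha|^p + v_\alpha\operatorname{div}\bigl(|x|^{\alpha p}|\nabla v_\alpha|^{p-2}\nabla v_\alpha\bigr).
\end{equation*}
The integrated middle term then becomes
\begin{equation*}
\int |v|^p\Bigl(-|x|^{\alpha p}|\nabla v_\alpha|^p+\Bigl(\tfrac{N-p+\alpha p}{p}\Bigr)^p\frac{v_\alpha^p}{|x|^{p-\alpha p}}\Bigr)dx
\end{equation*}
plus a boundary term on $|x|=\varepsilon$. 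Since $|v|^p v_\alpha^p=|u|^p$, adding the first term cancels $|v|^p|\nabla v_\alpha|^p|x|^{\alpha p}$ and leaves precisely $\bigl(\tfrac{N-p+\alpha p}{p}\bigr)^p\int|u|^p|x|^{\alpha p-p}$. No boundary term arises at infinity because $u$ has compact support.

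The main obstacle is controlling the boundary term at the origin. The vector field is radial with modulus
\begin{equation*}
v_\alpha|\nabla v_\alpha|^{p-1}|x|^{\alpha p}\sim |x|^{-(N-p+\alpha p)/p\cdot p+\alpha p-(p-1)}=|x|^{1-N},
\end{equation*}
and $|v|^p=|u|^p|x|^{N-p+\alpha p}$. The boundary integral over the sphere $|x|=\varepsilon$, whose area is of order $\varepsilon^{N-1}$, is therefore bounded by $C\|u\|_\infty^p\,\varepsilon^{N-p+\alpha p}\to0$, because $N-p+\alpha p>0$.

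Two further technical points need care as $\varepsilon\to0$. First, $v$ is only $C^1$ away from $0$, which suffices on $\{|x|>\varepsilon\}$. Second, $|v|^p$ is $C^1$ since $p\ge2>1$. The integrals on $\{|x|>\varepsilon\}$ converge to the full ones: $|\nabla u|^p|x|^{\alpha p}$ and $|u|^p|x|^{\alpha p-p}$ are integrable because $p-\alpha p<N$, and the remainder term passes to the limit by monotone convergence.
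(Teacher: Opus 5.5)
Your proposal is correct and follows the paper's proof. Both use the ground-state substitution $u=v\,v_\alpha$, apply the convexity estimate \eqref{Convex Estimate} with $X=v\nabla v_\alpha$ and $Y=v_\alpha\nabla v$, and integrate the cross term by parts using the Euler--Lagrange equation \eqref{Euler-Lagrange weighted Hardy}. Your excision of $\{|x|<\varepsilon\}$, with the boundary term bounded by $C\|u\|_\infty^p\varepsilon^{N-p+\alpha p}\to 0$, together with your Fatou-based density step, makes rigorous what the paper leaves implicit.
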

\begin{proof}
By density, it is sufficient to prove the result for $u\in C_{c}^{\infty}(\mathbb{R}^{N})$. Let $u=v v_{\alpha}$, where $v_{\alpha}$ is defined in
\eqref{omega alpha}. Then
    \begin{equation*}
        \nabla u = v \nabla v_{\alpha} + v_{\alpha} \nabla v. 
    \end{equation*}
Applying the convexity estimate \eqref{Convex Estimate} with $X=v\nabla v_{\alpha}$ and $Y=v_{\alpha}\nabla v$, we obtain
    \begin{align*}
   \int_{\mathbb{R}^{N}} & |\nabla u(x)|^{p} |x|^{\alpha p} \, dx   =  \int_{\mathbb{R}^{N}} \left|X+Y \right|^{p} |x|^{\alpha p} \, dx \\ &\geq \left( \frac{N-p+\alpha p}{p} \right)^{p} \int_{\mathbb{R}^N} \frac{|u(x)|^{p}}{|x|^{p- \alpha p}} \, dx + p \int_{\mathbb{R}^{N}} |\nabla v_{\alpha}|^{p-2} |x|^{\alpha p} v_{\alpha} v |v|^{p-2} \nabla v \cdot \nabla v_{\alpha} \, dx \\ & \quad \quad +  c_{p} \int_{\mathbb{R}^{N}} \frac{|\nabla v(x)|^{p}}{|x|^{N-p}}  \, dx .
\end{align*}
Observe that
\begin{equation*}
p |\nabla v_{\alpha}|^{p-2}|x|^{\alpha p} v_{\alpha}v|v|^{p-2}
\nabla v\cdot\nabla v_{\alpha}
=
v_{\alpha}|x|^{\alpha p} |\nabla v_{\alpha}|^{p-2}
\nabla v_{\alpha}\cdot\nabla(|v|^{p}).
\end{equation*}
Hence, integrating by parts, we obtain 
\begin{align*}
   p \int_{\mathbb{R}^{N}} |\nabla v_{\alpha}|^{p-2} |x|^{\alpha p} v_{\alpha} v |v|^{p-2} \nabla v \cdot \nabla v_{\alpha} \, dx & =  \int_{\mathbb{R}^{N}} v_{\alpha}|x|^{\alpha p} |\nabla v_{\alpha}|^{p-2}
\nabla v_{\alpha}\cdot\nabla(|v|^{p}) \, dx \\ & = - \int_{\mathbb{R}^{N}} |\nabla v_{\alpha}|^{p} |v|^{p} |x|^{\alpha p} \, dx \\ &  \quad  - \int_{\mathbb{R}^{N}} v_{\alpha} \operatorname{div} \left(|x|^{\alpha p} |\nabla v_{\alpha}|^{p-2} \nabla v_{\alpha} \right) |v|^{p} \, dx  . 
\end{align*}
Using the Euler--Lagrange equation \eqref{Euler-Lagrange weighted Hardy}, we have $-v_{\alpha} \operatorname{div} \left(|x|^{\alpha p} |\nabla v_{\alpha}|^{p-2} \nabla v_{\alpha} \right) = |\nabla v_{\alpha}|^{p} |x|^{\alpha p}$. Therefore, the above term vanishes. Hence,
\begin{align*}
\int_{\mathbb{R}^{N}}  |\nabla u(x)|^{p} |x|^{\alpha p} \, dx -  \left( \frac{N-p+\alpha p}{p} \right)^{p} \int_{\mathbb{R}^N} \frac{|u(x)|^{p}}{|x|^{p- \alpha p}} \, dx  \geq c_{p} \int_{\mathbb{R}^{N}} \frac{|\nabla v(x)|^{p}}{|x|^{N-p}}  \, dx.
\end{align*}
This completes the proof of lemma.
\end{proof}

The following lemma provides a quantitative convexity estimate for the
function $X \mapsto |X|^{p}$ in the singular range $1<p<2$. It can be
viewed as a second-order Taylor-type lower bound with a remainder term that remains effective even when the Hessian of $|X|^{p}$ is not uniformly positive. The result is also available in \cite{BanerjeeGangulySahu}. However, for the sake of completeness, we include here a different proof.

\begin{lemma}\label{Lemma: Estimate 1<p<2}
Let $1<p<2$, and let $X,Y \in \mathbb{R}^{N}\setminus\{0\}$. Then
\begin{equation}\label{eq:convexity-estimate}
|X+Y|^{p}
\geq
|X|^{p}
+
p|X|^{p-2}X\cdot Y
+
\frac{p(p-1)}{2}
(|X|+|Y|)^{p-2}|Y|^{2}.
\end{equation}
\end{lemma}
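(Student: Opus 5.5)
Our plan is to reduce the inequality to a one-variable statement by Taylor's formula with integral remainder along the segment from $X$ to $X+Y$. Set $\phi(t):=|X+tY|^{p}$ for $t\in[0,1]$. Then $\phi(0)=|X|^{p}$ and $\phi'(0)=p|X|^{p-2}X\cdot Y$. If the segment $\{X+tY\}$ avoids the origin, $\phi$ is $C^{2}$ and
\begin{equation*}
\phi''(t)=p|X+tY|^{p-2}|Y|^{2}+p(p-2)|X+tY|^{p-4}\bigl((X+tY)\cdot Y\bigr)^{2}\geq p(p-1)|X+tY|^{p-2}|Y|^{2},
\end{equation*}
by Cauchy--Schwarz, since $p-2<0$. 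The key observation is that $p-2<0$ and $|X+tY|\leq |X|+t|Y|\leq |X|+|Y|$. Hence $|X+tY|^{p-2}\geq(|X|+|Y|)^{p-2}$, and therefore $\phi''(t)\geq p(p-1)(|X|+|Y|)^{p-2}|Y|^{2}$.

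Taylor's formula $\phi(1)=\phi(0)+\phi'(0)+\int_0^1(1-t)\phi''(t)\,dt$ then gives the claim directly, since $\int_0^1(1-t)\,dt=\tfrac12$.

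The only delicate point is the case where the segment passes through the origin. This happens at most at one value $t_0\in(0,1]$, namely when $Y$ is a negative multiple of $X$. Near $t_0$ the function $\phi$ is not $C^{2}$, although $\phi''$ is still integrable because $p-2>-1$.

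We would handle this case by approximation. Perturb $Y$ to $Y_\varepsilon$ with $Y_\varepsilon\to Y$ such that the segment from $X$ to $X+Y_\varepsilon$ misses $0$; for $N\geq2$ a small transverse perturbation suffices. Apply the inequality to $Y_\varepsilon$ and let $\varepsilon\to0$, using that both sides are continuous in $Y$ when $X\neq0$ and $Y\neq0$.

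Alternatively, one can note that $\phi$ is $C^{1}$ with $\phi'$ absolutely continuous on $[0,1]$. Then $\phi'(1)-\phi'(0)=\int\phi''$ holds, and the Taylor formula remains valid directly.

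For $N=1$ no transverse perturbation exists. We would use this absolute-continuity argument there, noting that $\phi(t)=|x+ty|^{p}$ has $\phi'(t)=p|x+ty|^{p-2}(x+ty)y$, which is continuous and absolutely continuous since $|\,\cdot\,|^{p-2}$ is locally integrable.

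We expect this justification at the singular point to be the only real obstacle. The pointwise Hessian bound and the monotonicity of $r\mapsto r^{p-2}$ are routine.
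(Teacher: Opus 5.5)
Your proof is correct. The nondegenerate case is exactly the paper's argument: Taylor's formula with integral remainder for $\phi(t)=|X+tY|^{p}$, the Cauchy--Schwarz bound $\phi''\ge p(p-1)|X+tY|^{p-2}|Y|^{2}$, and the monotonicity bound $|X+tY|^{p-2}\ge(|X|+|Y|)^{p-2}$.

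You differ only in the degenerate case $X=-tY$, $t\in(0,1]$. The paper does not try to rescue the Taylor formula there. It substitutes $|X|=t|Y|$, $|X+Y|=(1-t)|Y|$, $X\cdot Y=-t|Y|^{2}$ and proves the scalar inequality
\[
F(t)=(1-t)^{p}+t^{p-1}(p-t)-\tfrac{p(p-1)}{2}(1+t)^{p-2}\ \ge\ (p-1)\bigl(1-\tfrac p2\bigr)>0 .
\]
This uses $(1-t)^{p}\ge(1-t)^{2}$, $t^{p-1}\ge t$ and $(1+t)^{p-2}\le1$. Its final display has a sign slip: the $(1+t)^{p-2}$ term should carry a plus sign.

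Of your two options, the absolute-continuity argument is the more economical. On that line $\phi(t)=|t-t_{0}|^{p}|Y|^{p}$, so $\phi'$ is absolutely continuous and $\phi''=p(p-1)|t-t_{0}|^{p-2}|Y|^{p}\in L^{1}(0,1)$. The integral-remainder formula therefore holds verbatim, and the same a.e. lower bound on $\phi''$ closes the argument in every dimension. The transverse perturbation, which needs $N\ge2$, is then unnecessary.

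Your route shows the degenerate configuration is not genuinely special, giving one uniform proof. The paper's route is entirely elementary and yields an explicit positive margin $(p-1)(1-\frac p2)|Y|^{p}$ in that case.

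A minor imprecision: the segment meets the origin exactly when $Y=-cX$ with $c\ge1$, not for every negative multiple. This affects nothing, since the smooth argument covers $0<c<1$.
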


\begin{proof}
We first assume that $X+tY\neq 0$ for all $t\in[0,1]$. Define
\begin{equation*}
   \phi(t):=|X+tY|^{p},
\quad \forall \, t\in[0,1]. 
\end{equation*}
Since $X+tY\neq0$ on $[0,1]$, the function $\phi$ belongs to
$C^{2}([0,1])$. Differentiating, we obtain
\begin{equation*}
    \phi'(t)
=
p|X+tY|^{p-2}(X+tY)\cdot Y,
\end{equation*}
and
\begin{equation*}
  \phi''(t)
=
p|X+tY|^{p-2}|Y|^{2}
+
p(p-2)|X+tY|^{p-4}
\big((X+tY)\cdot Y\big)^{2}.  
\end{equation*}
By the Cauchy--Schwarz inequality, $\big((X+tY)\cdot Y\big)^{2} \leq |X+tY|^{2}|Y|^{2}$. Since $p-2<0$, it follows that
\begin{align*}
\phi''(t)
&\geq
p|X+tY|^{p-2}|Y|^{2}
+
p(p-2)|X+tY|^{p-2}|Y|^{2}
\\
&=
p(p-1)|X+tY|^{p-2}|Y|^{2}.
\end{align*}
Moreover, using the triangle inequality and the fact that $p-2<0$, $|X+tY| \leq |X|+|Y|$, and hence
\begin{equation*}
    |X+tY|^{p-2}
\geq
(|X|+|Y|)^{p-2}.
\end{equation*}
Therefore,
\begin{equation*}
   \phi''(t)
\geq
p(p-1)(|X|+|Y|)^{p-2}|Y|^{2}. 
\end{equation*}
Applying Taylor's formula with integral remainder yields
\begin{equation*}
  \phi(1)
=
\phi(0)+\phi'(0)
+
\int_{0}^{1}(1-t)\phi''(t)\,dt.  
\end{equation*}
Since $\phi(0)=|X|^{p}$ and $\phi'(0)=p|X|^{p-2}X\cdot Y$, we obtain
\begin{equation*}
    |X+Y|^{p}
=
|X|^{p}
+
p|X|^{p-2}X\cdot Y
+
\int_{0}^{1}(1-t)\phi''(t)\,dt.
\end{equation*}
Using the lower bound for $\phi''$, we find
\begin{align*}
\int_{0}^{1}(1-t)\phi''(t)\,dt
&\geq
p(p-1)(|X|+|Y|)^{p-2}|Y|^{2}
\int_{0}^{1}(1-t)\,dt
\\
&=
\frac{p(p-1)}{2}
(|X|+|Y|)^{p-2}|Y|^{2}.
\end{align*}
Consequently,
\begin{equation*}
  |X+Y|^{p}
\geq
|X|^{p}
+
p|X|^{p-2}X\cdot Y
+
\frac{p(p-1)}{2}
(|X|+|Y|)^{p-2}|Y|^{2}.  
\end{equation*}
This proves the claim whenever $X+tY\neq0$ for all $t\in[0,1]$.

It remains to consider the case in which $X+tY=0$ for some $t\in[0,1]$. In this situation $X$ and $Y$ are collinear and
there exists $t\in[0,1]$ such that $X=-tY$. Hence $|X|=t|Y|$, $|X+Y|=(1-t)|Y|$,
and $X\cdot Y=-t|Y|^{2}$. Define
\[
F(t)
:=
(1-t)^{p}
+t^{p-1}(p-t)
-\frac{p(p-1)}{2}(1+t)^{p-2},
\qquad t\in[0,1].
\]
Since $1-t \in [0,1]$ and $1<p<2$, we have $(1-t)^{p} \geq (1-t)^{2}$. Also using $0<p-1<1$, we obtain $t^{p-1}(p-t) > t (p-t)$. Therefore, using these estimates and $(1+t)^{p-2} \leq 1$, we arrive at
    \begin{align*}
        F(t) \geq (1-t)^{2} + t(p-t) - \frac{p(p-1)}{2} = 1+ (p-2) t - \frac{p(p-1)}{2}.
    \end{align*}
    Since $p-2<0$, we have $(p-2)t \geq (p-2)$, for all $t \in [0,1]$. Therefore,
    \begin{equation*}
       F(t) \geq 1 + p-2 - \frac{p(p-1)}{2} = (p-1) \left( 1- \frac{p}{2} \right) >0. 
    \end{equation*}
    Therefore, using $F(t) >0$ and the definition of $F$, we obtain
    \begin{align*}
        (1-t)^{p} \geq t^{p}- p t^{p-1}- \frac{p(p-1)}{2}(1+t)^{p-2}, \quad \forall \, t \in [0,1]. 
    \end{align*}
    Multiplying the above inequality with $|Y|^{p}$ and using the relations above, we conclude that
    \begin{align*}
        |X+Y|^{p}  \geq  |X|^{p} + p|X|^{p-2} X\cdot Y 
+ \frac{p(p-1)}{2} (|X|+|Y|)^{p-2} |Y|^{2}.
    \end{align*}
This completes the proof.
\end{proof}

The following lemma establishes a weighted Hardy inequality with a remainder term in the case $1<p<2$. By combining the quantitative convexity estimate obtained above with suitable assumptions, we derive the following result.

\begin{lemma}\label{Lemma: Local Hardy Remainder 1<p<2}
   Let $N \geq 2$, $1 < p < 2$ and $\alpha \in [0,1)$ be such that $0<p-\alpha p<N$. Suppose that $u \in C_c^1(\mathbb{R}^N)$ satisfies $\delta_{p, \alpha}(u) \leq 1$ and $\|u\|_{L^{p^{*}_{\alpha},p}(\mathbb{R}^N)} = 1$. Then there exists a constant $C = C(N,p, \alpha) > 0$ such that
    \begin{equation}
       \int_{\mathbb{R}^N}
|\nabla u(x)|^p |x|^{\alpha p}\,dx - \left( \frac{N-p+ \alpha p}{p} \right)^{p} \int_{\mathbb{R}^N} \frac{|u(x)|^{p}}{|x|^{p- \alpha p}} \, dx  \geq C  \left(\int_{\mathbb{R}^{N}} \frac{|\nabla v(x)|^{p}}{|x|^{N-p}} \, dx \right)^{\frac{2}{p}},
    \end{equation}
    where $v(x) = u(x) v^{-1}_{\alpha}(x)$, and $v_{\alpha}(x) = |x|^{-\frac{N-p+\alpha p}{p}}$.
\end{lemma}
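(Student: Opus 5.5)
We follow the proof of Lemma \ref{Lemma: weighted Hardy remainder p geq 2}, replacing the convexity estimate \eqref{Convex Estimate} by Lemma \ref{Lemma: Estimate 1<p<2}. Write $u=v\,v_\alpha$, so that $\nabla u=X+Y$ with $X=v\nabla v_\alpha$ and $Y=v_\alpha\nabla v$.

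\textbf{Step 1 (pointwise expansion).} Apply \eqref{eq:convexity-estimate} pointwise, multiply by $|x|^{\alpha p}$ and integrate. Where $X$ or $Y$ vanishes, the inequality is trivial or follows by continuity. The first-order term is treated exactly as in Lemma \ref{Lemma: weighted Hardy remainder p geq 2}. It equals $\int v_\alpha|x|^{\alpha p}|\nabla v_\alpha|^{p-2}\nabla v_\alpha\cdot\nabla(|v|^p)\,dx$, and after integrating by parts and using \eqref{Euler-Lagrange weighted Hardy} it cancels against the excess of $\int|X|^p|x|^{\alpha p}$ over the Hardy term. Some care is needed near the origin, where $v=u|x|^{(N-p+\alpha p)/p}$ vanishes to suitable order, so the boundary term on small spheres tends to zero. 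The result is
\begin{equation*}
\delta_{p,\alpha}(u)\geq \frac{p(p-1)}{2}\int_{\mathbb{R}^N}(|X|+|Y|)^{p-2}|Y|^2|x|^{\alpha p}\,dx=:\frac{p(p-1)}{2}\,I.
\end{equation*}

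\textbf{Step 2 (reverse H\"older).} We compare $J:=\int|Y|^p|x|^{\alpha p}\,dx$ with $I$. Since $|v_\alpha|^p|x|^{\alpha p}=|x|^{p-N}$, we have $J=\int|\nabla v|^p|x|^{p-N}\,dx$, which is exactly the target integral. Write
\begin{equation*}
|Y|^p=\big[(|X|+|Y|)^{p-2}|Y|^2\big]^{p/2}(|X|+|Y|)^{p(2-p)/2}.
\end{equation*}
H\"older's inequality with exponents $2/p$ and $2/(2-p)$, applied with respect to the measure $|x|^{\alpha p}dx$, then gives
\begin{equation*}
J\leq I^{p/2}\Big(\int(|X|+|Y|)^p|x|^{\alpha p}\,dx\Big)^{(2-p)/2}.
\end{equation*}

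\textbf{Step 3 (bounding the remaining factor).} This is the main obstacle, and it is where the hypotheses $\delta_{p,\alpha}(u)\le1$ and $\|u\|_{L^{p^*_\alpha,p}}=1$ enter. We estimate
\begin{equation*}
(|X|+|Y|)^p\le C(|X|^p+|Y|^p)\le C\big(|X|^p+|X+Y|^p\big),
\end{equation*}
using $|Y|\le|X+Y|+|X|$. For the $X$-part, $\int|X|^p|x|^{\alpha p}\,dx=\big(\tfrac{N-p+\alpha p}{p}\big)^p\int|u|^p|x|^{\alpha p-p}\,dx$, and Lemma \ref{Lemma: Hardy potential and Lorentz} with $s=1$ bounds this by $C\|u\|^p_{L^{p^*_\alpha,p}}=C$. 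For the remaining part,
\begin{equation*}
\int|\nabla u|^p|x|^{\alpha p}\,dx=\delta_{p,\alpha}(u)+\Big(\tfrac{N-p+\alpha p}{p}\Big)^p\int\frac{|u|^p}{|x|^{p-\alpha p}}\,dx\le 1+C.
\end{equation*}
Hence $\int(|X|+|Y|)^p|x|^{\alpha p}\,dx\le C(N,p,\alpha)$.

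\textbf{Conclusion.} Combining the three steps gives $J\le C\,I^{p/2}$, that is, $I\ge cJ^{2/p}$. Step 1 then yields
\begin{equation*}
\delta_{p,\alpha}(u)\ge C\Big(\int|\nabla v|^p|x|^{p-N}\,dx\Big)^{2/p},
\end{equation*}
which is the claimed inequality.
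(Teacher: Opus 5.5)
Your proposal is correct and follows essentially the same route as the paper. The paper's proof also applies the convexity estimate of Lemma \ref{Lemma: Estimate 1<p<2} with $X=v\nabla v_\alpha$ and $Y=v_\alpha\nabla v$, removes the first-order term via the Euler--Lagrange equation, applies H\"older with exponents $2/p$ and $2/(2-p)$, and bounds $\int(|X|+|Y|)^p|x|^{\alpha p}\,dx$ using $|Y|\le|\nabla u|+|X|$, Lemma \ref{Lemma: Hardy potential and Lorentz}, and the hypotheses $\delta_{p,\alpha}(u)\le1$ and $\|u\|_{L^{p^{*}_{\alpha},p}}=1$. Your remarks on the degenerate cases $X=0$ or $Y=0$, and on the boundary flux at the origin (which is $O(\varepsilon^{N-p+\alpha p})$), only fill in details the paper leaves implicit.
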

\begin{proof}
   Let $u \in C^{1}_{c}(\mathbb{R}^{N})$ be a nonzero function.  Given that $\delta_{p}(u) \leq 1$ and $\|u\|_{L^{p^{*}_{\alpha},p}(\mathbb{R}^N)} = 1$, applying Lemma \ref{Lemma: Hardy potential and Lorentz} with $s = 1$, we obtain
\begin{align}\label{ineq7654}
   \int_{\mathbb{R}^N}
|\nabla u(x)|^p |x|^{\alpha p}\,dx  & \leq 1 +  \left( \frac{N-p+ \alpha p}{p} \right)^{p} \int_{\mathbb{R}^N} \frac{|u(x)|^{p}}{|x|^{p- \alpha p}} \, dx \nonumber \\ & \leq 1 + \left( \frac{N-p+ \alpha p}{p} \right)^{p} \left( \frac{|\mathbb{S}^{N-1}|}{N} \right)^{\frac{p- \alpha p}{N}}.
\end{align}
Since $u = v v_{\alpha}$, we have
\begin{equation*}
    \nabla u = v \nabla v_{\alpha} + v_{\alpha} \nabla v.
\end{equation*}
Applying Lemma \ref{Lemma: Estimate 1<p<2} with $X = v \nabla v_{\alpha}$, and $Y = v_{\alpha} \nabla v,$ we obtain
\begin{align*}
  &  \int_{\mathbb{R}^{N}} |\nabla u(x)|^{p}|x|^{\alpha p} \, dx   =  \int_{\mathbb{R}^{N}} |X+Y|^{p} |x|^{\alpha p} \, dx  \\  &  \quad \geq \left( \frac{N-p+\alpha p}{p} \right)^{p} \int_{\mathbb{R}^N} \frac{|u(x)|^{p}}{|x|^{p- \alpha p}} \, dx + p \int_{\mathbb{R}^{N}} |\nabla v_{\alpha}|^{p-2} |x|^{\alpha p} v_{\alpha} v |v|^{p-2} \nabla v \cdot \nabla v_{\alpha} \, dx \\ & \quad + \frac{p(p-1)}{2} \int_{\mathbb{R}^{N}} \frac{|\nabla v(x)|^{2}}{|x|^{\left(\frac{N-p+ \alpha p}{p}\right)2}} |x|^{\alpha p} \left( |v(x) \nabla v_{\alpha}(x)| + |v_{\alpha}(x) \nabla v(x)| \right)^{p-2} \, dx .
\end{align*}
Following the proof of Lemma \ref{Lemma: weighted Hardy remainder p geq 2}, the second term on the right-hand side term vanishes. Therefore, we obtain
\begin{align*}
    \int_{\mathbb{R}^{N}} &\frac{|\nabla v(x)|^{2}}{|x|^{\left(\frac{N-p+ \alpha p}{p} \right) 2}}  |x|^{\alpha p} \left( |v(x) \nabla v_{\alpha}(x)| + |v_{\alpha}(x) \nabla v(x)| \right)^{p-2} \, dx \\ & \leq \frac{2}{p(p-1)} \left(   \int_{\mathbb{R}^N}
|\nabla u(x)|^p |x|^{\alpha p}\,dx - \left( \frac{N-p+ \alpha p}{p} \right)^{p} \int_{\mathbb{R}^N} \frac{|u(x)|^{p}}{|x|^{p- \alpha p}} \, dx \right) \\ &  =\left( \frac{2}{p(p-1)} \right) \delta_{p, \alpha}(u).
\end{align*}
Applying Hölder's inequality with exponents $\gamma =  \frac{2}{p}$ and $\eta = \frac{2}{2-p}$, and writing
\begin{equation*}
    |x|^{\alpha p} = |x|^{\alpha p\left( \frac{1}{\gamma} + \frac{1}{\eta} \right)} = |x|^{\frac{\alpha p}{\gamma}} |x|^{\frac{\alpha p}{\eta}},
\end{equation*}
and using the above estimate, we obtain
\begin{align*}
    \int_{\mathbb{R}^{N}} \frac{|\nabla v(x)|^{p}}{|x|^{N-p}} \, dx & = \int_{\mathbb{R}^{N}} \frac{|\nabla v(x)|^{p}}{|x|^{N-p+ \alpha p}} |x|^{\alpha p}  \frac{\left( |v(x) \nabla v_{\alpha}(x)| + |v_{\alpha}(x) \nabla v(x)| \right)^{(p-2)\frac{p}{2}}}{\left( |v(x) \nabla v_{\alpha}(x)| + |v_{\alpha}(x) \nabla v(x)| \right)^{(p-2)\frac{p}{2}}} \, dx \\ & \leq \left( \int_{\mathbb{R}^{N}} \frac{|\nabla v(x)|^{2}}{|x|^{\left(\frac{N-p+ \alpha p}{p} \right) 2}}  |x|^{\alpha p} \left( |v(x) \nabla v_{\alpha}(x)| + |v_{\alpha}(x) \nabla v(x)| \right)^{p-2} \, dx \right)^{\frac{p}{2}} \\ & \quad \times \left( \int_{\mathbb{R}^{N}} |x|^{\alpha p} \left(  |v \nabla v_{\alpha}| + |v_{\alpha} \nabla v|  \right)^{p} \, dx \right)^{\frac{2-p}{2}} 
\\ & \leq C (\delta_{p, \alpha}(u))^{\frac{p}{2}} \left( \int_{\mathbb{R}^{N}} |x|^{\alpha p} \left(  |v \nabla v_{\alpha}| + |v_{\alpha} \nabla v|  \right)^{p} \, dx \right)^{\frac{2-p}{2}},
\end{align*}
where $C=C(p)>0$. Using $u= v v_{\alpha}$, we have $|v_{\alpha} \nabla v| \leq |v_{\alpha} \nabla v + v \nabla v_{\alpha}| + |v \nabla v_{\alpha}| = |\nabla u|+ |v \nabla v_{\alpha}|$. Therefore, the above inequality reduces to 
\begin{equation*}
    \int_{\mathbb{R}^{N}} \frac{|\nabla v(x)|^{p}}{|x|^{N-p}} \, dx \leq C (\delta_{p, \alpha}(u))^{\frac{p}{2}} \left( \int_{\mathbb{R}^{N}} |x|^{\alpha p} \left( 2 |v \nabla v_{\alpha}| + |\nabla u|  \right)^{p} \, dx \right)^{\frac{2-p}{2}}. 
\end{equation*}
Also, using Lemma \ref{Lemma: Hardy potential and Lorentz} with $s=1$ and the assumption $\|u\|_{L^{p^{*}_{\alpha},p}(\mathbb{R}^{N})} =1$, we have
\begin{equation*}
    \int_{\mathbb{R}^{N}} |v \nabla v_{\alpha}|^{p} |x|^{\alpha p} \, dx  =  \left( \frac{N-p+ \alpha p}{p} \right)^{p} \int_{\mathbb{R}^{N}} \frac{|u(x)|^{p}}{|x|^{p-\alpha p}} \, dx \leq \left( \frac{N-p+ \alpha p}{p} \right)^{p} \left( \frac{|\mathbb{S}^{N-1}|}{N} \right)^{\frac{p-\alpha p}{N}}.
\end{equation*}
Therefore, using the above estimate and the inequality \eqref{ineq7654}, we arrive at 
\begin{equation*}
    \int_{\mathbb{R}^{N}} \frac{|\nabla v(x)|^{p}}{|x|^{N-p}} \, dx \leq C (\delta_{p, \alpha}(u))^{\frac{p}{2}}. 
\end{equation*}
This finishes the proof of Lemma.
\end{proof}

\section{Proof of Quantitative stablity for weighted Hardy inequality}\label{Section 4}

In this section, we prove Theorem \ref{Theorem: Quantitative Stabiliy Weighted Hardy inequality} and establish the quantitative stability of the weighted Hardy inequality \eqref{Weighted Hardy inequality}. The weighted Hardy inequality with a remainder term, established in the previous section, reduces the proof to estimating this remainder term.

\subsection{Proof of Theorem \ref{Theorem: Quantitative Stabiliy Weighted Hardy inequality}}
We first consider the case of nonnegative functions. Let $u \in C^{1}_{c}(\mathbb{R}^{N})$ with $u \geq 0$. Let $p>1$, $\alpha \in [0,1)$ be such that $0<p-\alpha p<N$, and let $\beta := \max \{4, 2p \} $. We also assume that 
\begin{equation*}
    \| u \|_{L^{p^{*}_{\alpha},p}(\mathbb{R}^{N})} = 1,
\end{equation*}    
where $p^{*}_{\alpha} = \frac{Np}{N-p+\alpha p}$. First assume that $\delta_{p, \alpha}(u)>1$. Since  $L^{p^{*}_{\alpha},p}(\mathbb{R}^{N}) \hookrightarrow L^{p^{*}_{\alpha},\infty}(\mathbb{R}^{N})$ (see Proposition \ref{Proposition on Lorentz space}), the normalization above yields
\begin{equation*}
   \delta_{p, \alpha}(u)^{\frac{1}{\beta}} > 1 =  \| u\|_{L^{p^{*}_{\alpha},p}(\mathbb{R}^{N})} \geq C \| u\|_{L^{p^{*}_{\alpha},\infty}(\mathbb{R}^{N})} \geq C \inf_{a \geq 0} \| u- a \, v_{\alpha}\|_{L^{p^{*}_{\alpha},\infty}(\mathbb{R}^{N})},
\end{equation*}
where $v_{\alpha} =|x|^{-\frac{N-p+\alpha p}{p}}$. Consequently,
\begin{equation*}
    \inf_{a \geq 0} \| u- a \, v_{\alpha}\|^{\beta}_{L^{p^{*}_{\alpha},\infty}(\mathbb{R}^{N})} \leq \delta_{p, \alpha}(u). 
\end{equation*}
It remains to consider the case $\delta_{p, \alpha}(u) \leq 1$. Let $v(x) = u(x) |x|^{\frac{N-p+ \alpha p}{p}}=  u(x) v^{-1}_{\alpha}(x)$. For $a>0$, to be choosen later, and define the set 
\begin{equation*}
    \mathcal{F}= \{ x \in \mathbb{R}^{N} : v(x)> \delta_{p,\alpha}(u)^{a} \}.
\end{equation*}
Since $u \in C^{1}_{c}(\mathbb{R}^{N})$, the set $\mathcal{F}$ is open and bounded. 

For $\ell\in\mathbb{Z}$, introduce the dyadic annuli
\begin{equation}\label{Defn: A_{ell}}
    \mathcal{A}_{\ell} := \{ x \in \mathbb{R}^{N} : 2^{\ell} < |x| \leq 2^{\ell+1} \}.
\end{equation}
Moreover, $\lim_{|x| \to 0} v(x) = 0$. Hence, there exists smallest $r_{0} \in \mathbb{Z}$ such that 
\begin{equation*}
    v(x) \leq \delta_{p, \alpha}(u)^{a}, \quad \text{for every } x \in \mathcal{A}_{r_{0}} ,
\end{equation*}
while $\mathcal{A} \cap \mathcal{A}_{r_{0}+1} \neq \phi$. In particular, $(v)_{\mathcal{A}_{r_{0}}} \leq \delta_{p, \alpha}(u)^{a}$, where $(v)_{\mathcal{A}_{r_{0}}}$ denotes the average of $v$ over $\mathcal{A}_{r_{0}}$ (see \eqref{Defn: average of u}).
Therefore, for every $x \in \mathcal{F}$,
\begin{equation*}
    0< v(x) - \delta_{p, \alpha}(u)^{a} \leq v(x) - (v)_{\mathcal{A}_{r_{0}}}.
\end{equation*} 

\begin{figure}[H]
    \centering
    \includegraphics[width=0.8\textwidth]{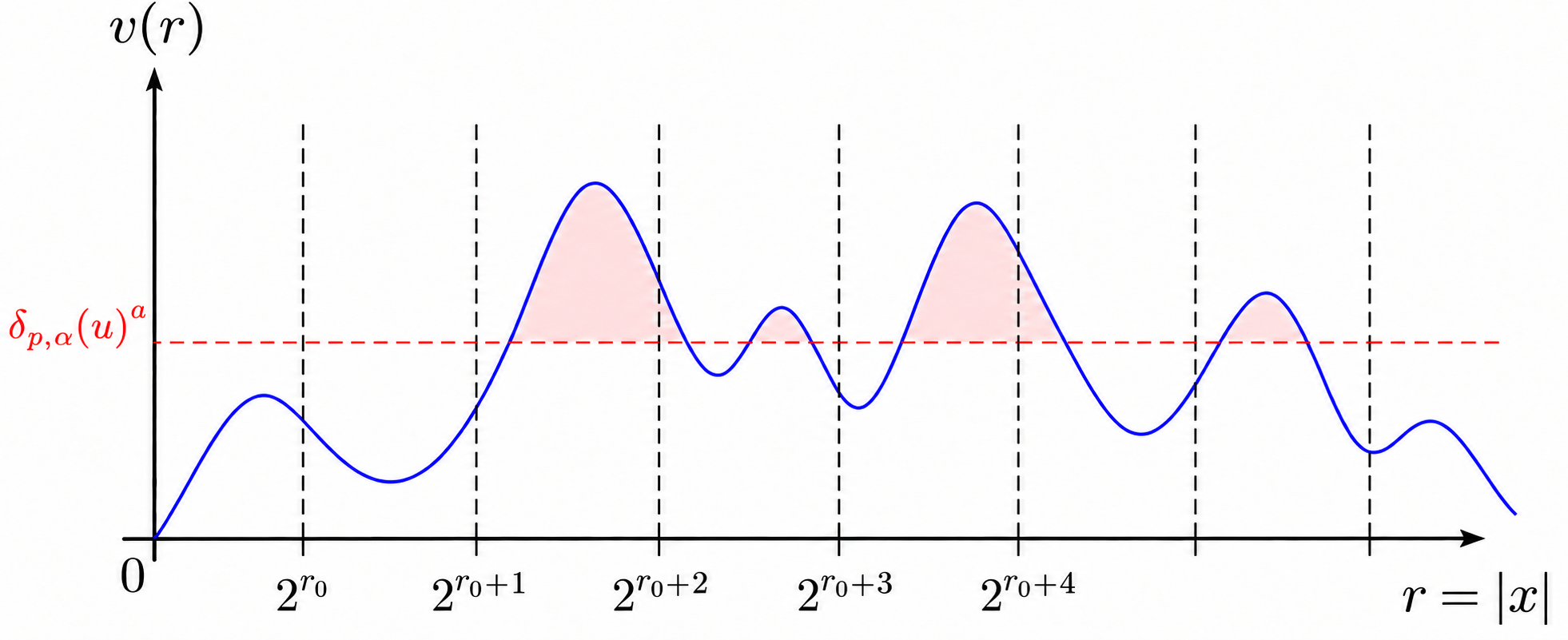}
    \caption{A possible radial profile of $v(x)$.}
    \label{fig:radial-profile}
\end{figure}

It follows that
\begin{align}\label{ineq111}
    \int_{\mathcal{F}} |v(x) - \delta_{p, \alpha}(u)^{a}|^{p^{*}_{\alpha}} \, \frac{dx}{|x|^{N}} & \leq \int_{\mathcal{F}} |v(x) - (v)_{\mathcal{A}_{r_{0}}}|^{p^{*}_{\alpha}} \, \frac{dx}{|x|^{N}} \nonumber \\ 
    & = \sum_{\substack{\ell \in \mathbb{Z} \\ \mathcal{F} \cap \mathcal{A}_{\ell} \neq \phi}} \int_{\mathcal{F} \cap \mathcal{A}_{\ell}} |v(x) - (v)_{\mathcal{A}_{r_{0}}}|^{p^{*}_{\alpha}} \, \frac{dx}{|x|^{N}}. 
\end{align}
For each $\ell \in \mathbb{Z}$ satisfying $\mathcal{F} \cap \mathcal{A}_{\ell} \neq \phi$, we derive
\begin{align}\label{ineq9877}
  \int_{\mathcal{F} \cap \mathcal{A}_{\ell}} |v(x) - (v)_{\mathcal{A}_{r_{0}}}|^{p^{*}_{\alpha}} \, \frac{dx}{|x|^{N}} & \leq 2^{p^{*}_{\alpha}-1} \int_{\mathcal{F} \cap \mathcal{A}_{\ell}} |v(x) - (v)_{\mathcal{A}_{\ell}}|^{p^{*}_{\alpha}} \, \frac{dx}{|x|^{N}} \nonumber \\ & \quad + 2^{p^{*}_{\alpha}-1} |(v)_{\mathcal{A}_{\ell}} - (v)_{\mathcal{A}_{r_{0}}}|^{p^{*}_{\alpha}} \int_{\mathcal{F} \cap \mathcal{A}_{\ell}}  \, \frac{dx}{|x|^{N}} \nonumber \\ & \leq 2^{p^{*}_{\alpha}-1} \int_{\mathcal{A}_{\ell}} |v(x) - (v)_{\mathcal{A}_{\ell}}|^{p^{*}_{\alpha}} \, \frac{dx}{|x|^{N}} \nonumber \\ & \quad + 2^{p^{*}_{\alpha}-1} |(v)_{\mathcal{A}_{\ell}} - (v)_{\mathcal{A}_{r_{0}}}|^{p^{*}_{\alpha}} \int_{\mathcal{F} \cap \mathcal{A}_{\ell}}  \, \frac{dx}{|x|^{N}}  .
\end{align}
We estimate the first time on the right-hand side by means of Lemma \ref{Lemma: Weighted Sobolev ineq Lambda}. Taking $n=2$ and $\lambda=2^{\ell}$, we obtain
\begin{align*}
    \int_{\mathcal{A}_{\ell}} |v(x) - (v)_{\mathcal{A}_{\ell}}|^{p^{*}_{\alpha}} \, \frac{dx}{|x|^{N}}  & \leq C \frac{1}{|\mathcal{A}_{\ell}|} \int_{\mathcal{A}_{\ell}} |v(x) - (v)_{\mathcal{A}_{\ell}}|^{p^{*}_{\alpha}} \, dx \\ &  \leq C \left( 2^{\ell(p-\alpha p-N)} \int_{\mathcal{A}_{\ell}} |\nabla v(x)|^{p} |x|^{\alpha p} \, dx  \right)^{\frac{p^{*}_{\alpha}}{p}} .
\end{align*}
The constant $C=C(N,p,\alpha)$ is independent of $\ell$. For all $x \in \mathcal{A}_{\ell}$, we have $|x| \leq 2^{\ell+1}$, and hence  $|x|^{(N-p+ \alpha p)} \leq 2^{(\ell+1)(N-p+ \alpha p)}$. Therefore,
\begin{equation*}
\int_{\mathcal{A}_{\ell}} |v(x) - (v)_{\mathcal{A}_{\ell}}|^{p^{*}_{\alpha}} \, \frac{dx}{|x|^{N}} \leq C \left( \int_{\mathcal{A}_{\ell}}  \, \frac{|\nabla v(x)|^{p}}{|x|^{N-p}}  \, dx \right)^{\frac{p^{*}_{\alpha}}{p}},
\end{equation*}
where $C=C(N,p, \alpha)>0$. Therefore, the estimate \eqref{ineq9877}, reduces to 
\begin{align}\label{ineq87651}
 \int_{\mathcal{F} \cap \mathcal{A}_{\ell}} |v(x) - (v)_{\mathcal{A}_{r_{0}}}|^{p^{*}_{\alpha}} \, \frac{dx}{|x|^{N}} & \leq C \left( \int_{\mathcal{A}_{\ell}}   \frac{|\nabla v(x)|^{p}}{|x|^{N-p}}  \, dx\right)^{\frac{p^{*}_{\alpha}}{p}} \nonumber \\ & \quad + 2^{p^{*}_{\alpha}-1} |(v)_{\mathcal{A}_{\ell}} - (v)_{\mathcal{A}_{r_{0}}}|^{p^{*}_{\alpha}} \int_{\mathcal{F} \cap \mathcal{A}_{\ell}}  \, \frac{dx}{|x|^{N}}.
\end{align}
We next control the difference of the average appearing in the above inequality. Applying Lemma \ref{Lemma: on two disjoint set} with $E = \mathcal{A}_{k}$ and $F = \mathcal{A}_{k+1}$, for $r_{0} \leq k \leq \ell-1$, we obtain
\begin{align*}
  |(v)_{\mathcal{A}_{\ell}} - (v)_{\mathcal{A}_{r_{0}}}|^{p^{*}_{\alpha}} & \leq 2^{p^{*}_{\alpha}-1} \sum_{k = r_{0}}^{\ell-1}  |(v)_{\mathcal{A}_{k}} - (v)_{\mathcal{A}_{k+1}}|^{p^{*}_{\alpha}} \\ & \leq C \sum_{k = r_{0}}^{\ell-1} \fint_{\mathcal{A}_{k} \cup \mathcal{A}_{k+1}}  |v(x) - (v)_{\mathcal{A}_{k} \cup \mathcal{A}_{k+1}}|^{p^{*}_{\alpha}} \, dx, 
\end{align*}
where $C=C(N,p, \alpha)>0$. We apply Lemma \ref{Lemma: Weighted Sobolev ineq Lambda} with $n=4$ and $\lambda = 2^{k}$ and using $|x|^{(N-p+ \alpha p)} \leq 2^{(k+2)(N-p+ \alpha p)}$ for all $x\in A_{k} \cup A_{k+1}$, we obtain
\begin{align*}
    |(v)_{\mathcal{A}_{\ell}} - (v)_{\mathcal{A}_{r_{0}}}|^{p^{*}_{\alpha}} & \leq C \sum_{k = r_{0}}^{\ell-1} \left( 2^{k(p-\alpha p-N)}  \int_{A_{k} \cup A_{k+1}} |\nabla v(x)|^{p} |x|^{\alpha p} \, dx \right)^{\frac{p^{*}_{\alpha}}{p}} \\ & \leq C \sum_{k = r_{0}}^{\ell-1} \left(  \int_{A_{k} \cup A_{k+1}}   \frac{|\nabla v(x)|^{p}}{|x|^{N-p}} \, dx \right)^{\frac{p^{*}_{\alpha}}{p}}  \leq C \left( \int_{\mathbb{R}^{N}}  \, \frac{|\nabla v(x)|^{p}}{|x|^{N-p}} \, dx \right)^{\frac{p^{*}_{\alpha}}{p}},
\end{align*}
where $C = C(N,p, \alpha)>0$.
On the other hand, if $x \in \mathcal{F} \cap \mathcal{A}_{\ell}$, we have $v(x) > \delta_{p, \alpha}(u)^a$. Hence,
\begin{equation*}
    \int_{\mathcal{F} \cap \mathcal{A}_{\ell}}  \, \frac{dx}{|x|^{N}} \leq \frac{1}{\delta_{p, \alpha}(u)^{a p}} \int_{\mathcal{F} \cap \mathcal{A}_{\ell}} \frac{|u(x)|^{p}}{|x|^{p- \alpha p}} \, dx.
\end{equation*}
Therefore, combining the above two estimates, and using Lemma \ref{Lemma: weighted Hardy remainder p geq 2} and Lemma \ref{Lemma: Local Hardy Remainder 1<p<2} under the assumptions $\delta_{p, \alpha}(u) \leq 1$ and $\|u\|_{L^{p^{*}_{\alpha},p}(\mathbb{R}^{N})} = 1$, we obtain
\begin{align*}
    |(v)_{\mathcal{A}_{\ell}}  -  (v)_{\mathcal{A}_{r_{0}}}|^{p^{*}_{\alpha}}  \int_{\mathcal{F} \cap \mathcal{A}_{\ell}}  \, \frac{dx}{|x|^{N}} & \leq C \delta_{p, \alpha}(u)^{\frac{p^{*}_{\alpha}}{p} b}  \, \frac{1}{\delta_{p, \alpha}(u)^{a p}} \int_{\mathcal{F} \cap \mathcal{A}_{\ell}} \frac{|u(x)|^{p}}{|x|^{p-\alpha p}} \, dx \\ & = C \delta_{p, \alpha}(u)^{\frac{p^{*}_{\alpha}}{p} b - a p} \int_{\mathcal{F} \cap \mathcal{A}_{\ell}} \frac{|u(x)|^{p}}{|x|^{p-\alpha p}} \, dx,
\end{align*}
where $b:=\min \{ 1, \frac{p}{2} \}$. Substituting the above estimate into \eqref{ineq87651}, we obtain
\begin{equation*}
    \int_{\mathcal{F} \cap \mathcal{A}_{\ell}} |v(x) - (v)_{\mathcal{A}_{r_{0}}}|^{p^{*}_{\alpha}} \, \frac{dx}{|x|^{N}}  \leq C \left( \int_{\mathcal{A}_{\ell}}  \frac{|\nabla v(x)|^{p}}{|x|^{N-p}}  \, dx \right)^{\frac{p^{*}_{\alpha}}{p}}+ C \delta_{p, \alpha}(u)^{\frac{p^{*}_{\alpha}}{p} b - a p} \int_{\mathcal{F} \cap \mathcal{A}_{\ell}} \frac{|u(x)|^{p}}{|x|^{p- \alpha p}} \, dx.
\end{equation*}
Therefore, combining \eqref{ineq111} with the estimates above, and using Lemma \ref{Lemma: weighted Hardy remainder p geq 2} and Lemma \ref{Lemma: Local Hardy Remainder 1<p<2} under the assumptions $\delta_{p, \alpha}(u) \leq 1$ and the normalization $\| u \|_{L^{p^{*}_{\alpha},p}(\mathbb{R}^{N})} = 1$, together with Lemma \ref{Lemma: Hardy potential and Lorentz} with $s = 1$, we obtain
\begin{align*}
    \int_{\mathcal{F}} |v(x) - \delta_{p, \alpha}(u)^{a}|^{p^{*}_{\alpha}} \, \frac{dx}{|x|^{N}} & \leq \sum_{\substack{\ell \in \mathbb{Z} \\ \mathcal{F} \cap \mathcal{A}_{\ell} \neq \phi}} \int_{\mathcal{F} \cap \mathcal{A}_{\ell}} |v(x) - (v)_{\mathcal{A}_{r_{0}}}|^{p^{*}_{\alpha}} \, \frac{dx}{|x|^{N}} \\ & \leq C \delta_{p, \alpha}(u)^{\frac{p^{*}_{\alpha}}{p}b} + C \delta_{p, \alpha}(u)^{\frac{p^{*}_{\alpha}}{p}b - a p} \int_{\mathbb{R}^{N}} \frac{|u(x)|^{p}}{|x|^{p- \alpha p}} \, dx \\ & \leq C \left(\delta_{p, \alpha}(u)^{\frac{p^{*}_{\alpha}}{p}b} + \delta_{p, \alpha}(u)^{\frac{p^{*}_{\alpha}}{p} b - a p} \right),
\end{align*}
where $C=C(N,p, \alpha)>0$. Choose $a = \frac{p^{*}_{\alpha}}{2 p^{2}} b$.  Then, using the assumption $\delta_{p, \alpha}(u) \leq 1$, we obtain
\begin{align*}
    \int_{\mathcal{F}} |v(x) - \delta_{p, \alpha}(u)^{a}|^{p^{*}_{\alpha}} \, \frac{dx}{|x|^{N}} & \leq C \left( \delta_{p, \alpha}(u)^{\frac{p^{*}_{\alpha}}{p} b} + \delta_{p, \alpha}(u)^{\frac{p^{*}_{\alpha}}{2p} b} \right) \\ & \leq C \left( \delta_{p, \alpha}(u)^{\frac{p^{*}_{\alpha}}{2p}b} + \delta_{p, \alpha}(u)^{\frac{p^{*}_{\alpha}}{2p}b }  \right) \leq C \delta_{p, \alpha}(u)^{\frac{p^{*}_{\alpha}}{2p}b} .
\end{align*}
Since $v_{\alpha}(x) = |x|^{- \frac{N-p+ \alpha p}{p}}$, the Lorentz embedding gives (see Proposition \ref{Proposition on Lorentz space} and note that $L^{p^{*}_{\alpha}, p^{*}_{\alpha}} = L^{p^{*}_{\alpha}}$) 
\begin{align*}
    \| u - \delta_{p, \alpha}(u)^{a} \, v_{\alpha} \|_{L^{p^{*}_{\alpha}, \infty}(\mathcal{F})} & \leq C  \left( \int_{\mathcal{F}} | u(x) - \delta_{p, \alpha}(u)^{a} \, v_{\alpha}(x) |^{p^{*}_{\alpha}} \, dx \right)^{\frac{1}{p^{*}_{\alpha}}} \\ &  =  C  \left( \int_{\mathcal{F}} |v(x) - \delta_{p, \alpha}(u)^{a}|^{p^{*}_{\alpha}} \, \frac{dx}{|x|^{N}} \right)^{\frac{1}{p^{*}_{\alpha}}}  \leq C \delta_{p, \alpha}(u)^{\frac{b}{2p}}.
\end{align*}
Consequently,
\begin{equation*}
    \| u - \delta_{p, \alpha}(u)^{a}  \,v_{\alpha}\|^{\beta}_{L^{p^{*}_{\alpha}, \infty}(\mathcal{F})} \leq C \delta_{p, \alpha}(u) ,
\end{equation*}
where $\beta= \max \{4, 2p \}$. It remains to estimate the same quantity on $\mathcal{F}^c$.
For $x\in\mathcal{F}^c$, $v(x)\leq
    \delta_{p, \alpha}(u)^{a}$, and hence
\begin{equation*}
 v^{-1}_{\alpha}(x) |u(x)- \delta_{p, \alpha}(u)^{a} \,  v_{\alpha}(x)| =  |v(x) - \delta_{p, \alpha}(u)^{a}| \leq 2 \delta_{p, \alpha}(u)^{a}, \quad \forall \ x \in \mathcal{F}^{c},
\end{equation*}
and therefore
\begin{equation*}
     |u(x)- \delta_{p, \alpha}(u)^{a} \, v_{\alpha}(x)| \leq 2 \delta_{p, \alpha}(u)^{a} \, v_{\alpha}(x), \quad \forall \ x \in \mathcal{F}^{c} .
\end{equation*}
Since $v_{\alpha} \in L^{p^{*}_{\alpha}, \infty}(\mathbb{R}^{N})$, we obtain
\begin{equation*}
    \|u- \delta_{p, \alpha}(u)^{a} \, v_{\alpha}(x) \|_{L^{p^{*}_{\alpha}, \infty}(\mathcal{F}^{c})} \leq C \delta_{p, \alpha}(u)^{a} \| v_{\alpha} \|_{L^{p^{*}_{\alpha}, \infty}(\mathcal{F}^{c})} \leq C \delta_{p, \alpha}(u)^{\frac{p^{*}_{\alpha}}{2p^{2}}b} \leq C \delta_{p, \alpha}(u)^{\frac{b}{2p}}. 
\end{equation*}
In the above inequality, we have used $\delta_{p, \alpha}(u) \leq 1$ and $\frac{p^{*}_{\alpha}}{p}>1$. Consequently,
\begin{equation*}
     \|u- \delta_{p, \alpha}(u)^{a} \,  v_{\alpha}\|^{\beta}_{L^{p^{*}_{\alpha}, \infty}(\mathcal{F}^{c})} \leq C \delta_{p, \alpha}(u).
\end{equation*}
Combining the estimates on $\mathcal{F}$ and $\mathcal{F}^c$, we obtain
\begin{align*}
    \|u- \delta_{p, \alpha}(u)^{a} \, v_{\alpha}\|_{L^{p^{*}_{\alpha}, \infty}(\mathbb{R}^{N})} & \leq C \Big( \|u- \delta_{p, \alpha}(u)^{a} \, v_{\alpha}\|_{L^{p^{*}_{\alpha}, \infty}(\mathcal{F})} \\ & \quad \quad  + \|u- \delta_{p, \alpha}(u)^{a} \, v_{\alpha}\|_{L^{p^{*}_{\alpha}, \infty}(\mathcal{F}^{c})} \Big) \\ & \leq C \delta_{p, \alpha}(u)^{\frac{b}{2p}}. 
\end{align*}
Therefore, combining the cases $\delta_{p, \alpha}(u) > 1$ and $\delta_{p, \alpha}(u) \leq 1$, and under the normalization $\| u \|_{L^{p^{*}_{\alpha},p}(\mathbb{R}^{N})} =1$, we obtain
\begin{equation*}
   \inf_{a \geq 0} \|u- a \, v_{\alpha}\|^{\beta}_{L^{p^{*}_{\alpha}, \infty}(\mathbb{R}^{N})} \leq \|u- \delta_{p, \alpha}(u)^{a} \, v_{\alpha}\|^{\beta}_{L^{p^{*}_{\alpha}, \infty}(\mathbb{R}^{N})} \leq C \delta_{p, \alpha}(u). 
\end{equation*}
We next remove the normalization. Set
\begin{equation*}
    \widetilde{u} = \frac{u}{\| u \|_{L^{p^{*}_{\alpha},p}(\mathbb{R}^{N})} }.
\end{equation*} 
Applying the normalized estimate to $\widetilde{u}$ and using the
homogeneity of the involved quantities, we obtain
\begin{align*}
  \inf_{a \geq 0} \frac{\| u -  a \, v_{\alpha}\|^{\beta}_{L^{p^{*}_{\alpha},\infty}(\mathbb{R}^{N})}}{ \| u \|^{\beta}_{L^{p^{*}_{\alpha},p}(\mathbb{R}^{N})} }  \leq C \frac{\delta_{p, \alpha}(u)}{ \| u \|^{p}_{L^{p^{*}_{\alpha},p}(\mathbb{R}^{N})}  }.   
\end{align*}
Hence, for every nonnegative function $u \in C^{1}_{c}(\mathbb{R}^{N})$,
\begin{equation}\label{ineq112}
    \inf_{a \geq 0} \| u - a v_{\alpha}\|^{\beta}_{L^{p^{*}_{\alpha},\infty}(\mathbb{R}^{N})} \leq C \, \delta_{p, \alpha}(u)\| u \|^{\beta - p}_{L^{p^{*}_{\alpha},p}(\mathbb{R}^{N})}  ,
\end{equation}
where $\beta= \max \{4, 2p \}$. We finally pass to arbitrary real-valued functions. Let
$u\in C^{1}_{c}(\mathbb{R}^N)$ and write $u_{+}(x) = \max \{ u(x),0 \} $ and $u_{-}(x) = \max \{ -u(x),0 \}$, the positive and the negative parts of $u$ respectively, so that $u=u_{+}- u_{-}$. For $b,c\geq0$, the triangle inequality gives
\begin{align*}
  \inf_{a \in \mathbb{R}}  \| u  - a\, v_{ \alpha}\|_{L^{p^{*}_{\alpha},\infty}(\mathbb{R}^{N})} & = \inf_{b,c \geq 0 }  \| u_{+} - u_{-} - (b-c)\, v_{\alpha}\|_{L^{p^{*}_{\alpha},\infty}(\mathbb{R}^{N})}  \\ & \leq \inf_{b \geq 0}  \| u_{+} -b \,  v_{\alpha}\|_{L^{p^{*}_{\alpha},\infty}(\mathbb{R}^{N})}  + \inf_{c \geq 0}  \| u_{-} - c\, v_{\alpha}\|_{L^{p^{*}_{\alpha},\infty}(\mathbb{R}^{N})}.
\end{align*}
Applying \eqref{ineq112} to $u_+$ and $u_-$, respectively, we obtain
\begin{align*}
    \sum_{\pm} \inf_{a \geq 0} \| u_{\pm} - a\, v_{\alpha} \|_{L^{p^{*}_{\alpha}, \infty}(\mathbb{R}^{N})} & \leq C \sum_{\pm}  \left( \delta_{p, \alpha}(u_{\pm}) \right)^{\frac{1}{\beta}} \| u_{\pm} \|^{\frac{\beta - p}{\beta}}_{L^{p^{*}_{\alpha},p}(\mathbb{R}^{N})}   \\ & \leq C \left( \delta_{p, \alpha}(u_{+})+ \delta_{p, \alpha}(u_{-}) \right)^{\frac{1}{\beta}}  \| u \|^{\frac{\beta - p}{\beta}}_{L^{p^{*}_{\alpha},p}(\mathbb{R}^{N})}  \\ & \leq C \left( \delta_{p, \alpha}(u)  \right)^{\frac{1}{\beta}}  \| u \|^{\frac{\beta - p}{\beta}}_{L^{p^{*}_{\alpha},p}(\mathbb{R}^{N})} .
\end{align*}
Combining the preceding estimates with Lemma
\ref{Lemma: Hardy potential and Lorentz} with $s=1$, we conclude that
\begin{align*}
\left( \frac{N}{\mathbb{S}^{N-1}} \right)^{\frac{p- \alpha p}{N}} & \inf_{a \in \mathbb{R}} \frac{ \| u -a\, v_{\alpha}\|^{\beta}_{L^{p^{*}_{\alpha},\infty}(\mathbb{R}^{N})}}{\| u \|^{\beta}_{L^{p^{*}_{\alpha},p}(\mathbb{R}^{N})}}  \int_{\mathbb{R}^{N}} \frac{|u(x)|^{p}}{|x|^{p-\alpha p}} \, dx \\ & \leq  \inf_{a \in \mathbb{R}} \frac{ \| u - a\, v_{\alpha}\|^{\beta}_{L^{p^{*}_{\alpha},\infty}(\mathbb{R}^{N})}}{\| u \|^{\beta}_{L^{p^{*}_{\alpha},p}(\mathbb{R}^{N})}}  \| u \|^{p}_{L^{p^{*}_{\alpha},p}(\mathbb{R}^{N})}  \leq C \, \delta_{p, \alpha}(u).
\end{align*}
Therefore, using the definition of $d_{p, \alpha}$, defined in \eqref{Defn: d_p, alpha}, we obtain
\begin{equation*}
    \delta_{p, \alpha} (u) \geq C \left( \int_{\mathbb{R}^{N}} \frac{|u(x)|^{p}}{|x|^{p-\alpha p}} \, dx  \right) d_{p, \alpha}(u)^{\beta},
\end{equation*}
where $\beta= \max \{ 4, 2p \}$. This proves Theorem \ref{Theorem: Quantitative Stabiliy Weighted Hardy inequality}.

\section{Extending a \texorpdfstring{$W^{s,p, \alpha}(\Omega)$}{Wsp} functions to the whole of \texorpdfstring{$\mathbb{R}^{N}$}{Rn}}\label{Section : 5}

In this section, we establish the extension theorem for the weighted fractional Sobolev space $W^{s,p,\alpha}$. In particular, we show that every open set $\Omega$ of class
$C^{0,1}$ with bounded boundary and $0 \notin \partial \Omega$ is an extension domain for $W^{s,p,\alpha}$. This result will be useful in establishing the weighted fractional Sobolev inequality on a bounded Lipschitz domain $\Omega$ with $0 \notin \partial \Omega$. Throughout this section, we assume that $\alpha=\alpha_{1}+\alpha_{2}\geq0$ and $\alpha_{1}p, \, \alpha_{2}p\in(-N,sp)$. These conditions ensure that the weighted Gagliardo seminorm \eqref{Weighted Gagliardo Seminorm} is finite for every $C_c^1$ function (see \cite[Lemma $2.1$]{Valdinoci2015}). Consequently, the weighted fractional Sobolev space $W^{s,p,\alpha}$ is well defined under these assumptions. For the unweighted fractional Sobolev space, this extension theorem was established in \cite[Section $5$]{HitchhikersGuide2012}.

\smallskip

The next lemma establishes an extension result for functions $u$ whose support is compactly contained in an open set $\Omega$.

\begin{lemma}\label{Lemma: Extension 1}
Let $p \geq 1$, $s\in(0,1)$, and let $\alpha_1,\alpha_2\in\mathbb{R}$ with $\alpha=\alpha_1+\alpha_2 \geq 0$ satisfy $\alpha_{1} p, \, \alpha_{2} p\in(-N,sp)$. Let $\Omega\subset\mathbb{R}^N$ be an open set and let $u\in W^{s,p,\alpha}(\Omega)$. Suppose that there exists a compact set $K\subset \subset \Omega$ such that $u\equiv 0$ in $\Omega\setminus K$. Define the zero extension of $u$ to $\mathbb{R}^N$ by
\begin{equation*}
    \widetilde u(x):=
\begin{cases}
u(x), & x\in\Omega,\\
0, & x\in\mathbb{R}^N\setminus\Omega.
\end{cases}
\end{equation*}
Then $\widetilde u\in W^{s,p,\alpha}(\mathbb{R}^N)$ and there exists a constant $C=C(N,p,s,\alpha_1,\alpha_2,K,\Omega)>0$ such that
\begin{equation*}
    \|\widetilde u\|_{W^{s,p,\alpha}(\mathbb{R}^N)}
\leq
C \|u\|_{W^{s,p,\alpha}(\Omega)} .
\end{equation*}
\end{lemma}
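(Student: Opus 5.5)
We must bound $\|\widetilde u\|_{W^{s,p,\alpha}(\mathbb{R}^N)}$. The weighted $\mathcal{L}^{p,\alpha}$ part is immediate, since $\widetilde u$ vanishes outside $\Omega$: $\|\widetilde u\|_{\mathcal{L}^{p,\alpha}(\mathbb{R}^N)}=\|u\|_{\mathcal{L}^{p,\alpha}(\Omega)}$.

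For the Gagliardo seminorm, we split $\mathbb{R}^N\times\mathbb{R}^N$ into $\Omega\times\Omega$, $\Omega\times\Omega^c$, $\Omega^c\times\Omega$ and $\Omega^c\times\Omega^c$. The last region contributes nothing, and the first reproduces $[u]^p_{W^{s,p,\alpha}(\Omega)}$. By the symmetric structure of the two cross terms, it suffices to estimate
\begin{equation*}
I:=\int_{K}|u(x)|^p|x|^{\alpha_1p}\left(\int_{\mathbb{R}^N\setminus\Omega}\frac{|y|^{\alpha_2p}}{|x-y|^{N+sp}}\,dy\right)dx ,
\end{equation*}
together with the analogous term in which the roles of $\alpha_1$ and $\alpha_2$ are exchanged. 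Here we have used that $u=0$ on $\Omega\setminus K$, so the $x$-integration is effectively over $K$.

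The key quantity is $d:=\operatorname{dist}(K,\partial\Omega)>0$. For $x\in K$ and $y\notin\Omega$ we have $|x-y|\geq d$. We then bound the inner integral uniformly in $x\in K$. On the bounded part $\{|y|\le R\}$, with $R=2\sup_K|x|+1$, we use $|x-y|^{-N-sp}\le d^{-N-sp}$ together with the local integrability of $|y|^{\alpha_2p}$, which holds because $\alpha_2p>-N$. On $\{|y|>R\}$ we have $|x-y|\ge |y|/2$, so the integrand is bounded by $2^{N+sp}|y|^{\alpha_2p-N-sp}$; this is integrable at infinity because $\alpha_2p<sp$. Hence the inner integral is at most a constant $M=M(N,s,p,\alpha_2,K,\Omega)$, and therefore $I\le M\int_K|u|^p|x|^{\alpha_1p}\,dx\le M\|u\|^p_{\mathcal{L}^{p,\alpha}(\Omega)}$. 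The symmetric term is handled in the same way with $\alpha_1$ in place of $\alpha_2$; this is exactly why the norm contains both weights $|x|^{\alpha_1p}+|x|^{\alpha_2p}$.

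The main obstacle I expect is the possibility that $0\in K$ or that $0$ lies near $K$, where $|x|^{\alpha_1p}$ could be singular. The bound above does not need to avoid this, however, since the weight in $x$ is simply absorbed into the $\mathcal{L}^{p,\alpha}$ norm. Summing the contributions then gives $\|\widetilde u\|^p_{W^{s,p,\alpha}(\mathbb{R}^N)}\le(1+2M')\|u\|^p_{W^{s,p,\alpha}(\Omega)}$, which is the claimed estimate.
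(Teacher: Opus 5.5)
Your proposal is correct and follows the paper's proof essentially step for step. You use the same splitting of the seminorm and reduce the cross terms to $x\in K$. You then bound the inner integral uniformly over a bounded region (using $|x-y|\ge d$ and $\alpha_j p>-N$) and its complement (using $|x-y|\ge |y|/2$ and $\alpha_j p<sp$). The only difference is that the paper takes $d=\operatorname{dist}(K,\mathbb{R}^N\setminus\Omega)$ directly. Your $\operatorname{dist}(K,\partial\Omega)$ gives the same bound, since a segment from $x\in K$ to $y\notin\Omega$ must cross $\partial\Omega$.
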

\begin{proof}
First, by the definition of the zero extension,
\begin{align}\label{ineq01}
   \|\widetilde u\|_{\mathcal{L}^{p,\alpha}(\mathbb{R}^N)}^p
&=
\int_{\mathbb{R}^N}
|\widetilde u(x)|^p \left( |x|^{\alpha_{1} p} + |x|^{\alpha_{2} p} \right)\,dx \nonumber \\
&=
\int_{\Omega}
| u(x)|^p \left( |x|^{\alpha_{1} p} + |x|^{\alpha_{2} p} \right)\,dx =
\|u\|_{\mathcal{L}^{p,\alpha}(\Omega)}^p. 
\end{align}

It remains to estimate the weighted Gagliardo seminorm. By the definition of $\widetilde u$, we have
\begin{align*}
[\widetilde u]_{W^{s,p,\alpha}(\mathbb{R}^N)}^p
={}&
\int_{\Omega}\int_{\Omega}
\frac{|u(x)-u(y)|^p}
{|x-y|^{N+sp}}
|x|^{\alpha_{1} p}|y|^{\alpha_{2} p}
\,dx\,dy
\\
&+
\int_{\Omega}\int_{\mathbb{R}^N\setminus\Omega}
\frac{|u(x)|^p}
{|x-y|^{N+sp}}
|x|^{\alpha_{1} p}|y|^{\alpha_{2} p}
\,dy\,dx \\ & + \int_{\mathbb{R}^N\setminus\Omega}\int_{\Omega}
\frac{|u(y)|^p}
{|x-y|^{N+sp}}
|x|^{\alpha_{1} p}|y|^{\alpha_{2} p}
\,dy\,dx.
\end{align*}
Therefore,
\begin{align*}
[\widetilde u]_{W^{s,p,\alpha}(\mathbb{R}^N)}^p
= &
[u]_{W^{s,p,\alpha}(\Omega)}^p
+
\int_{\Omega} |u(x)|^p |x|^{\alpha_{1} p} \left( \int_{\mathbb{R}^N\setminus\Omega}
\frac{|y|^{\alpha_{2} p}}
{|x-y|^{N+sp}}
\,dy \right) dx \\ & + \int_{\Omega} |u(y)|^p |y|^{\alpha_{2} p} \left( \int_{\mathbb{R}^N\setminus\Omega}  
\frac{|x|^{\alpha_{1} p}}
{|x-y|^{N+sp}}
\,dx \right) dy.
\end{align*}
Since $u\equiv0$ in $\Omega\setminus K$, the second and third integral reduces to
\begin{equation*}
I_{i,j}:=
\int_K |u(x)|^p|x|^{\alpha_ip}
\left(
\int_{\mathbb{R}^N\setminus\Omega}
\frac{|y|^{\alpha_jp}}
{|x-y|^{N+sp}}\,dy
\right)dx, \quad \text{for } i,j \in \{ 1,2 \} \quad \text{and} \quad i \neq j.   
\end{equation*}
Since $K\subset \subset\Omega$, there exists $d>0$ such that $d:=\operatorname{dist}(K,\mathbb{R}^N\setminus\Omega)>0$. Choose $R>0$ such that $K\subset B_R(0)$. We split the inner integral into two parts:
\begin{equation*}
\int_{\mathbb{R}^N\setminus\Omega}
\frac{|y|^{\alpha_jp}}
{|x-y|^{N+sp}}\,dy
=
\int_{(\mathbb{R}^N\setminus\Omega)\cap B_{2R}}
\frac{|y|^{\alpha_jp}}
{|x-y|^{N+sp}}\,dy
+
\int_{(\mathbb{R}^N\setminus\Omega)\setminus B_{2R}}
\frac{|y|^{\alpha_jp}}
{|x-y|^{N+sp}}\,dy.  
\end{equation*}

For $x\in K$ and $y\in\mathbb{R}^N\setminus\Omega$, we have $|x-y|\geq d$. Consequently,
\begin{align*}
\int_{(\mathbb{R}^N\setminus\Omega)\cap B_{2R}}
\frac{|y|^{\alpha_jp}}
{|x-y|^{N+sp}}\,dy
\leq
d^{-(N+sp)}
\int_{B_{2R}}|y|^{\alpha_jp}\,dy \leq C,
\end{align*}
where the last integral is finite since $\alpha_j p>-N$. 

On the other hand, if $x\in K\subset B_R(0)$ and $|y|>2R$,  then $|y|> 2 |x|$ and therefore,
\begin{equation*}
    |x-y|
\geq |y|-|x| = \frac{|y|}{2} + \frac{|y|}{2} - |x|
\geq \frac{|y|}{2}.
\end{equation*}
Therefore using $\alpha_j p<sp$ and the above inequality, we obtain,
\begin{equation*}
\int_{(\mathbb{R}^N\setminus\Omega)\setminus B_{2R}}
\frac{|y|^{\alpha_jp}}
{|x-y|^{N+sp}}\,dy
\leq
C\int_{|y|>2R}
|y|^{-N-sp+\alpha_j p}\,dy \leq C.
\end{equation*}
Therefore, we obtain
\begin{equation*}
    I_{i,j} \leq C \int_K |u(x)|^p|x|^{\alpha_ip} \, dx, \quad \text{for } i,j \in \{ 1,2 \} \quad \text{and} \quad i \neq j. 
\end{equation*}
Hence,
\begin{equation*}
    [\widetilde u]_{W^{s,p,\alpha}(\mathbb{R}^N)}^p
\leq 
[u]_{W^{s,p,\alpha}(\Omega)}^p
+
C \int_{\Omega} |u(x)|^p  \left( |x|^{\alpha_{1} p} + |x|^{\alpha_{2} p}  \right) dx,
\end{equation*}
where $C=C(N,p,s, \alpha_{1}, \alpha_{2},K, \Omega)>0$. Combining the above inequality with \eqref{ineq01} completes the proof.
\end{proof}

The next lemma establishes the weighted reflection property for the fractional Sobolev space $W^{s,p,\alpha}$. In particular, it shows that the even reflection of a function defined on $\Omega_+$ belongs to $W^{s,p,\alpha}(\Omega)$, with its norm controlled by the norm of the original function on $\Omega_+$.

\begin{lemma}\label{Lemma: Extension 2}
Let $p\geq 1$, $s\in(0,1)$, and let  $\alpha_1,\alpha_2\in\mathbb{R}$
with  $\alpha=\alpha_1+\alpha_2 \geq 0$ satisfy $\alpha_{1} p, \, \alpha_{2} p\in(-N,sp)$. Let $\Omega\subset\mathbb{R}^N$ be an open set which is symmetric
with respect to the hyperplane $\{ x= (x',x_{N}) \in \mathbb{R}^{N-1} \times \mathbb{R} : x_N=0\}$. Set
\begin{equation*}
  \Omega_+
:=
\{x=(x',x_N)\in\Omega:x_N>0\},
\qquad
\Omega_-
:=
\{x=(x',x_N)\in\Omega:x_N<0\}.  
\end{equation*}
Let $u\in W^{s,p,\alpha}(\Omega_+)$. Define the even reflection $\overline{u}$ of $u$ by
\begin{equation*}
    \overline{u}(x',x_N)
:=
\begin{cases}
u(x',x_N), & x_N>0,\\[1mm]
u(x',-x_N), & x_N<0.
\end{cases}
\end{equation*}
Then $\overline{u}\in W^{s,p,\alpha}(\Omega)$ and
\begin{equation*}
  \|\overline{u}\|_{W^{s,p,\alpha}(\Omega)}
\leq
4\|u \|_{W^{s,p,\alpha}(\Omega_+)}.
\end{equation*}
\end{lemma}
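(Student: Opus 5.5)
The plan is to follow the unweighted argument of \cite[Lemma 5.2]{HitchhikersGuide2012}. The key observation is that the reflection $R(x',x_N)=(x',-x_N)$ is an isometry preserving $|x|$, so the power weights $|x|^{\alpha_1 p}$ and $|x|^{\alpha_2 p}$ are invariant under $R$. The $\mathcal{L}^{p,\alpha}$ part is immediate. Since $\Omega=\Omega_+\cup\Omega_-\cup(\Omega\cap\{x_N=0\})$ and the hyperplane has measure zero, the change of variables $x\mapsto Rx$ on $\Omega_-$ gives
\begin{equation*}
\|\overline u\|^p_{\mathcal{L}^{p,\alpha}(\Omega)}=2\|u\|^p_{\mathcal{L}^{p,\alpha}(\Omega_+)}.
\end{equation*}

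For the weighted Gagliardo seminorm I will split $\Omega\times\Omega$ into the four blocks $\Omega_\pm\times\Omega_\pm$. The block $\Omega_+\times\Omega_+$ is exactly $[u]^p_{W^{s,p,\alpha}(\Omega_+)}$. On $\Omega_-\times\Omega_-$, substituting $x=R\tilde x$ and $y=R\tilde y$ preserves $|x-y|$, $|x|$ and $|y|$, so this block equals the same quantity. For the mixed block $\Omega_+\times\Omega_-$, I substitute $y=R\tilde y$ with $\tilde y\in\Omega_+$, which turns the integrand into
\begin{equation*}
\frac{|u(x)-u(\tilde y)|^p}{|x-R\tilde y|^{N+sp}}|x|^{\alpha_1p}|\tilde y|^{\alpha_2p}.
\end{equation*}
Here I use the elementary inequality $|x-R\tilde y|\ge|x-\tilde y|$ for $x_N,\tilde y_N>0$. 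This holds because $(x_N+\tilde y_N)^2\ge(x_N-\tilde y_N)^2$ while the $x'$-components agree. Since $N+sp>0$, the mixed block is therefore bounded by $[u]^p_{W^{s,p,\alpha}(\Omega_+)}$. The block $\Omega_-\times\Omega_+$ is handled symmetrically by substituting $x=R\tilde x$ instead, and the weights again transform correctly because $|R\tilde x|=|\tilde x|$.

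Summing the four blocks gives $[\overline u]^p_{W^{s,p,\alpha}(\Omega)}\le 4[u]^p_{W^{s,p,\alpha}(\Omega_+)}$. Hence
\begin{equation*}
\|\overline u\|^p_{W^{s,p,\alpha}(\Omega)}\le 4\|u\|^p_{W^{s,p,\alpha}(\Omega_+)},
\end{equation*}
and so $\|\overline u\|_{W^{s,p,\alpha}(\Omega)}\le 4^{1/p}\|u\|_{W^{s,p,\alpha}(\Omega_+)}\le 4\|u\|_{W^{s,p,\alpha}(\Omega_+)}$ since $p\ge1$.

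The only point needing care, and the one I expect to be the main obstacle, is the asymmetry of the weights $|x|^{\alpha_1 p}|y|^{\alpha_2 p}$ in the two mixed blocks. I must make sure the first variable keeps the $\alpha_1$ weight after the substitution, so that each mixed block is compared with the same seminorm $[u]_{W^{s,p,\alpha}(\Omega_+)}$ rather than with the version having $\alpha_1$ and $\alpha_2$ interchanged. Reflecting only the variable lying in $\Omega_-$ handles this, since that variable's modulus is unchanged and no swap of the variables is needed.
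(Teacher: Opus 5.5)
Your proposal is correct and follows essentially the same route as the paper. The shared steps are: invariance of $|x|$ under the reflection gives $\|\overline u\|^p_{\mathcal{L}^{p,\alpha}(\Omega)}=2\|u\|^p_{\mathcal{L}^{p,\alpha}(\Omega_+)}$; $\Omega\times\Omega$ is split into four blocks, and in the mixed blocks only the variable lying in $\Omega_-$ is reflected; and the inequality $|x-\tilde y|\le|x-R\tilde y|$ bounds those mixed blocks. Your closing step, $\|\overline u\|_{W^{s,p,\alpha}(\Omega)}\le 4^{1/p}\|u\|_{W^{s,p,\alpha}(\Omega_+)}\le 4\|u\|_{W^{s,p,\alpha}(\Omega_+)}$, makes explicit a combination the paper leaves implicit.
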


\begin{proof}
For $x=(x',x_N)\in\mathbb{R}^N$, denote by
\begin{equation*}
    x^*:=(x',-x_N)
\end{equation*}
its reflection with respect to the hyperplane $\{x_N=0\}$.
Notice that $|x^*|=|x|$. Consequently,
\begin{equation*}
   |x^*|^{\alpha_{1} p}=|x|^{\alpha_{1} p},
\qquad
|x^*|^{\alpha_{2} p}=|x|^{\alpha_{2} p}. 
\end{equation*}
Thus, the weights appearing in the definition of $W^{s,p,\alpha}$ are invariant under the reflection. 

We first consider the weighted $L^p$-norm. By symmetry of $\Omega$,
\begin{align*}
  \|\overline{u}\|_{\mathcal{L}^{p,\alpha}(\Omega)}^p &= \int_{\Omega} |\overline{u}(x)|^p \left(|x|^{\alpha_{1} p} + |x|^{\alpha_{2} p} \right) \,dx\\ &= \int_{\Omega_+} |u(x)|^p\left(|x|^{\alpha_{1} p} + |x|^{\alpha_{2} p} \right)\,dx + \int_{\Omega_-} |u(x^*)|^p\left(|x|^{\alpha_{1} p} + |x|^{\alpha_{2} p} \right)\,dx.  
\end{align*}
Making the change of variables  $x=(x',-z_N)=z^*$, where $z\in\Omega_+$, in the second integral, and using $|z^*|=|z|$, we obtain
\begin{align*}
    \int_{\Omega_-} |u(x^*)|^p\left(|x|^{\alpha_{1} p} + |x|^{\alpha_{2} p} \right)\,dx &= \int_{\Omega_+} |u(z)|^p\left(|z^{*}|^{\alpha_{1} p} + |z^{*}|^{\alpha_{2} p} \right)\,dz\\ &= \int_{\Omega_+} |u(z)|^p\left(|z|^{\alpha_{1} p} + |z|^{\alpha_{2} p} \right)\,dz.
\end{align*}
Therefore,
\begin{equation*}
    \|\overline{u}\|^{p}_{\mathcal{L}^{p,\alpha}(\Omega)} = 2 \|\overline{u}\|^{p}_{\mathcal{L}^{p,\alpha}(\Omega_{+})}
\end{equation*}

We now estimate the weighted Gagliardo seminorm. Splitting
$\Omega=\Omega_+\cup\Omega_-$, we obtain
\begin{align*}
[\overline{u}]_{W^{s,p,\alpha}(\Omega)}^p
={}&
\int_{\Omega_+}\int_{\Omega_+}
\frac{|u(x)-u(y)|^p}
{|x-y|^{N+sp}}
|x|^{\alpha_{1} p}|y|^{\alpha_{2} p}
\,dx\,dy
\\
&+
\int_{\Omega_+}\int_{\Omega_-}
\frac{|u(x)-u(y^*)|^p}
{|x-y|^{N+sp}}
|x|^{\alpha_{1} p}|y|^{\alpha_{2} p}
\,dy\,dx
\\
&+
\int_{\Omega_-}\int_{\Omega_+}
\frac{|u(x^*)-u(y)|^p}
{|x-y|^{N+sp}}
|x|^{\alpha_{1} p}|y|^{\alpha_{2} p}
\,dy\,dx
\\
&+
\int_{\Omega_-}\int_{\Omega_-}
\frac{|u(x^*)-u(y^*)|^p}
{|x-y|^{N+sp}}
|x|^{\alpha_{1} p}|y|^{\alpha_{2} p}
\,dx\,dy.
\end{align*}
The first term is exactly $[u]_{W^{s,p,\alpha}(\Omega_+)}^p$.

For the last term, making the change of variables
\begin{equation*}
    x=z^*,\qquad y=w^*,
\qquad z,w\in\Omega_+,
\end{equation*}
and using
\begin{equation*}
   |z^*-w^*|=|z-w|,
\qquad
|z^*|=|z|,
\qquad
|w^*|=|w|, 
\end{equation*}
we obtain
\begin{equation*}
    \int_{\Omega_-}\int_{\Omega_-}
\frac{|u(x^*)-u(y^*)|^p}
{|x-y|^{N+sp}}
|x|^{\alpha_{1} p}|y|^{\alpha_{2} p}
\,dx\,dy
=
\int_{\Omega_+}\int_{\Omega_+}
\frac{|u(z)-u(w)|^p}
{|z-w|^{N+sp}}
|z|^{\alpha_{1} p}|w|^{\alpha_{2} p}
\,dz\,dw.
\end{equation*}

It remains to estimate the two cross terms. Consider
\begin{equation*}
    I:=
\int_{\Omega_+}\int_{\Omega_-}
\frac{|u(x)-u(y^*)|^p}
{|x-y|^{N+sp}}
|x|^{\alpha_{1} p}|y|^{\alpha_{2} p}
\,dy\,dx.
\end{equation*}
Making the change of variables $y=z^*$, where $ z\in\Omega_+$, and using $|y|=|z|$, we obtain
\begin{equation*}
    I
=
\int_{\Omega_+}\int_{\Omega_+}
\frac{|u(x)-u(z)|^p}
{|x-z^*|^{N+sp}}
|x|^{\alpha_{1} p}|z|^{\alpha_{2} p}
\,dz\,dx.
\end{equation*}
For $x=(x',x_N),z=(z',z_N)\in\Omega_+$, we have
\begin{align*}
|x-z|^2
&=
|x'-z'|^2+(x_N-z_N)^2,\\
|x-z^*|^2
&=
|x'-z'|^2+(x_N+z_N)^2.
\end{align*}
Since $x_N,z_N>0$, we have $(x_N-z_N)^2\leq(x_N+z_N)^2$, and therefore
\begin{equation*}
  |x-z|\leq|x-z^*|.  
\end{equation*}
Hence
\begin{equation*}
   I
\leq
\int_{\Omega_+}\int_{\Omega_+}
\frac{|u(x)-u(z)|^p}
{|x-z|^{N+sp}}
|x|^{\alpha_{1} p}|z|^{\alpha_{2} p}
\,dz\,dx.
\end{equation*}
The other cross term is estimated in exactly the same way. Therefore,
\begin{equation*}[\overline{u}]_{W^{s,p,\alpha}(\Omega)}^p \leq 
4[u]_{W^{s,p,\alpha}(\Omega_+)}^p.
\end{equation*}
This completes the proof.
\end{proof}

The next lemma establishes the truncation property for the weighted fractional Sobolev space $W^{s,p,\alpha}$. In particular, it shows that the product of a function in $W^{s,p,\alpha}(\Omega)$ with a bounded Lipschitz function remains in $W^{s,p,\alpha}(\Omega)$, with its norm controlled by the norm of the original function. This result will be useful in the construction of the extension operator.

\begin{lemma}\label{Lemma: Extension 3}
Let $p \geq 1$, $s\in(0,1)$, and let $\alpha_1, \, \alpha_2\in\mathbb{R}$,
with $\alpha=\alpha_1+\alpha_2\geq 0$ satisfy $ \alpha_{1} p, \, \alpha_{2} p\in(-N,sp)$. Let $\Omega\subset\mathbb{R}^N$ be an bounded open set. Suppose that $u\in W^{s,p,\alpha}(\Omega)$ and $\psi\in C^{0,1}(\Omega)$,
where $0\leq \psi \leq 1$. Then $\psi u\in W^{s,p,\alpha}(\Omega)$ and 
\begin{equation*}
    \|\psi u\|_{W^{s,p,\alpha}(\Omega)}
\leq
C  \|u\|_{W^{s,p,\alpha}(\Omega)},
\end{equation*}
where $C>0$ depends on $N,p,s,\alpha_1,\alpha_2, \Omega$ and the
Lipschitz constant.
\end{lemma}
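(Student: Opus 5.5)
The plan is to follow the unweighted argument of \cite[Lemma 5.3]{HitchhikersGuide2012}, keeping track of the weights $|x|^{\alpha_1 p}|y|^{\alpha_2 p}$. The $\mathcal{L}^{p,\alpha}$ part is immediate: since $0\le\psi\le1$,
\begin{equation*}
\|\psi u\|_{\mathcal{L}^{p,\alpha}(\Omega)}\le\|u\|_{\mathcal{L}^{p,\alpha}(\Omega)}.
\end{equation*}
For the seminorm, start from the pointwise splitting
\begin{equation*}
\psi(x)u(x)-\psi(y)u(y)=\psi(x)\bigl(u(x)-u(y)\bigr)+u(y)\bigl(\psi(x)-\psi(y)\bigr).
\end{equation*}
This gives
\begin{equation*}
[\psi u]^p_{W^{s,p,\alpha}(\Omega)}\le 2^{p-1}[u]^p_{W^{s,p,\alpha}(\Omega)}+2^{p-1}\int_\Omega |u(y)|^p|y|^{\alpha_2p}\Bigl(\int_\Omega\frac{|\psi(x)-\psi(y)|^p}{|x-y|^{N+sp}}|x|^{\alpha_1p}\,dx\Bigr)dy,
\end{equation*}
where the first term uses $|\psi(x)|\le 1$.

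It then suffices to bound the inner integral $J(y)$ uniformly for $y\in\Omega$ by a constant depending on $N,p,s,\alpha_1,\Omega$ and the Lipschitz constant $L$ of $\psi$. Let $R:=\operatorname{diam}\Omega$; since $\Omega$ is bounded we may also take $R$ large enough that $\Omega\subset B_R(0)$. Use $|\psi(x)-\psi(y)|\le\min\{L|x-y|,1\}$ and split $\Omega$ into $\{|x-y|<1\}$ and $\{|x-y|\ge1\}$.

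On $\{|x-y|\ge1\}\cap\Omega$ the integrand is at most $|x|^{\alpha_1p}$. This is integrable over $B_R(0)$ because $\alpha_1p>-N$, giving a bound $C(N,\alpha_1,R)$.

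On $\{|x-y|<1\}$ the integrand is at most $L^p|x-y|^{p-N-sp}|x|^{\alpha_1p}$. This is the main obstacle: the weight singularity at $x=0$ and the kernel singularity at $x=y$ may interact, and the bound must be uniform in $y$.

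\emph{Case $\alpha_1\ge0$.} Here $|x|^{\alpha_1p}\le R^{\alpha_1p}$ on $\Omega$, and $\int_{B_1(y)}|x-y|^{p-N-sp}\,dx<\infty$ since $p-sp>0$.

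\emph{Case $\alpha_1<0$.} Split according to whether $|x|\ge|y|/2$ or $|x|<|y|/2$.
\begin{itemize}
\item On $\{|x|\ge|y|/2\}$ we have $|x|\ge|x-y|/3$, so $|x|^{\alpha_1p}\le C|x-y|^{\alpha_1p}$. The resulting kernel $|x-y|^{p-sp+\alpha_1p-N}$ is integrable near $y$ because $p-sp+\alpha_1p>-sp+\alpha_1 p+p>0$; this uses $\alpha_1 p>-N$ together with $p-sp+\alpha_1p>0$, and if the latter fails one falls back on the Hölder-type bound below.
\item On $\{|x|<|y|/2\}$ we have $|x-y|\ge|y|/2$. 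There the integrand is bounded by $C|y|^{p-N-sp}|x|^{\alpha_1p}$, and integrating $|x|^{\alpha_1 p}$ over $B_{|y|/2}$ gives $C|y|^{N+\alpha_1p}$. Altogether this contributes $C|y|^{p-sp+\alpha_1p}$, and the requirement is that this stays bounded for $y\in\Omega$.
\end{itemize}

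To handle the second bullet without that requirement, replace the bound $\min\{L|x-y|,1\}$ by $1$ on $\{|x|<|y|/2\}$. The contribution is then at most $|y|^{-N-sp}\cdot C|y|^{N+\alpha_1p}=C|y|^{\alpha_1p-sp}$. This is not bounded near $y=0$, but it is absorbed after multiplying by $|u(y)|^p|y|^{\alpha_2p}$: the resulting term is controlled by the weighted fractional Hardy inequality \eqref{Weighted fractional Hardy : point singularity}, or more simply by working in the regime used later, where $0\notin\overline{\Omega}$ and all weights are bounded above and below on $\Omega$. I would first try the direct estimate and, if the exponent bookkeeping fails, restrict to the case $\operatorname{dist}(0,\Omega)>0$, which is all the extension theorem needs.

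With $\sup_{y\in\Omega}J(y)\le C$ in hand, the second term is bounded by
\begin{equation*}
C\int_\Omega|u|^p|y|^{\alpha_2p}\,dy\le C\|u\|^p_{\mathcal{L}^{p,\alpha}(\Omega)},
\end{equation*}
which proves the lemma.
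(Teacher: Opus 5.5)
Your treatment of the $\mathcal{L}^{p,\alpha}$ part, the splitting, and Case $\alpha_1\ge0$ is correct and coincides with the paper's argument. The gap is Case $\alpha_1<0$, where the target $\sup_{y\in\Omega}J(y)<\infty$ is false in general.

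Suppose $0\in\overline{\Omega}$ and $p-sp+\alpha_1p<0$. The hypotheses allow this, e.g.\ $N\ge2$, $p=2$, $s=0.9$, $\alpha_1p=-1.5$, $\alpha_2p=1.6$. Take $\Omega=B_1(0)$ and $\psi(x)=\frac12(1+x_1)$. The region $\{|x-y|<|y|/2\}$ alone gives $J(y)\gtrsim|y|^{\alpha_1p}\int_{|z|<|y|/2}z_1^2|z|^{-N-sp}\,dz\sim|y|^{p-sp+\alpha_1p}\to\infty$ as $y\to0$.

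This breaks each part of your Case $\alpha_1<0$:
\begin{itemize}
\item Your first bullet's integrability claim is circular: it assumes $p-sp+\alpha_1p>0$. The promised ``H\"older-type bound below'' does not exist.
\item The cruder bound $C|y|^{\alpha_1p-sp}$ in the second bullet turns the error term into $\int_\Omega|u|^p|y|^{\alpha p-sp}\,dy$. This weight is more singular than both weights in $\|u\|_{\mathcal{L}^{p,\alpha}(\Omega)}$.
\item The weighted fractional Hardy inequality cannot absorb that term. It is an inequality on $\mathbb{R}^N$, and getting from $\Omega$ to $\mathbb{R}^N$ is exactly what the extension theorem built on this lemma does.
\item Retreating to $0\notin\overline{\Omega}$ changes the statement. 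It is also not all the extension theorem needs: there the lemma is applied to $\psi_0u$ on $\Omega$, which may contain the origin, since only $0\notin\partial\Omega$ is assumed.
\end{itemize}

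The missing idea is the symmetry of the seminorm. Since $\alpha_1+\alpha_2\ge0$, if $\alpha_1<0$ then $\alpha_2>0$. Interchanging $x$ and $y$ swaps the roles of $\alpha_1$ and $\alpha_2$; equivalently, split $\psi(x)u(x)-\psi(y)u(y)=\psi(y)\bigl(u(x)-u(y)\bigr)+u(x)\bigl(\psi(x)-\psi(y)\bigr)$. The inner integral then carries $|y|^{\alpha_2p}$ with $\alpha_2\ge0$, and is bounded uniformly exactly as in your Case $\alpha_1\ge0$. The outer integral is $\int_\Omega|u(x)|^p|x|^{\alpha_1p}\,dx\le\|u\|^p_{\mathcal{L}^{p,\alpha}(\Omega)}$, which is why that norm contains both weights. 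This is the paper's ``without loss of generality $\alpha_1\ge0$'', after which your Case $\alpha_1\ge0$ is its proof.

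Your direct route can also be repaired, but not via a uniform bound on $J$. Split into $\{|x-y|<|y|/2\}$, $\{|x|<|y|/2\}$ and the rest. This gives $J(y)\le C(1+|y|^{p-sp+\alpha_1p})$, with a logarithm in the borderline case. Then $|y|^{\alpha_2p}|y|^{p-sp+\alpha_1p}=|y|^{\alpha_1p}|y|^{\alpha_2p+p-sp}\le C|y|^{\alpha_1p}$ on bounded $\Omega$, because $\alpha_2p>0$.
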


\begin{proof}
Since the definition of $W^{s,p,\alpha}(\Omega)$ is symmetric with respect
to $\alpha_1$ and $\alpha_2$, and since $\alpha=\alpha_1+\alpha_2\geq0$,
we may assume without loss of generality that $\alpha_1\geq0$. Since $0 \leq \psi \leq1$, we immediately have
\begin{align*}
     \|\psi u\|_{\mathcal{L}^{p,\alpha}(\Omega)}^p &= \int_\Omega |\psi(x)u(x)|^p \left( |x|^{\alpha_{1} p} + |x|^{\alpha_{2} p} \right) \,dx\\ &\leq \int_\Omega |u(x)|^p\left( |x|^{\alpha_{1} p} + |x|^{\alpha_{2} p} \right)\,dx = \|u\|_{\mathcal{L}^{p,\alpha}(\Omega)}^p.
\end{align*}

It remains to estimate the weighted Gagliardo seminorm.
We write
\begin{equation*}
\psi(x)u(x)-\psi(y)u(y)
=
\psi(x)\bigl(u(x)-u(y)\bigr)
+
u(y)\bigl(\psi(x)-\psi(y)\bigr).
\end{equation*}
Using the elementary inequality $|a+b|^p\leq 2^{p-1}\bigl(|a|^p+|b|^p\bigr)$, we obtain
\begin{align*}
[\psi u]_{W^{s,p,\alpha}(\Omega)}^p
\leq{}&
2^{p-1}
\int_\Omega\int_\Omega
\frac{|\psi(x)|^p|u(x)-u(y)|^p}
{|x-y|^{N+sp}}
|x|^{\alpha_{1} p}|y|^{\alpha_{2} p}
\,dx\,dy
\\
&+
2^{p-1}
\int_\Omega\int_\Omega
\frac{|u(y)|^p|\psi(x)-\psi(y)|^p}
{|x-y|^{N+sp}}
|x|^{\alpha_{1} p}|y|^{\alpha_{2} p}
\,dx\,dy.
\end{align*}
Since $0\leq\psi\leq1$, the first integral is bounded by $2^{p-1}[u]_{W^{s,p,\alpha}(\Omega)}^p$. Hence
\begin{equation*}
   [\psi u]_{W^{s,p,\alpha}(\Omega)}^p
\leq
2^{p-1}[u]_{W^{s,p,\alpha}(\Omega)}^p
+
2^{p-1}I, 
\end{equation*}
where
\begin{equation*}
   I:=
\int_\Omega\int_\Omega
\frac{|u(y)|^p|\psi(x)-\psi(y)|^p}
{|x-y|^{N+sp}}
|x|^{\alpha_{1} p}|y|^{\alpha_{2} p}
\,dx\,dy. 
\end{equation*}

Let $\Lambda$ denote the Lipschitz constant of $\psi$. Then
\begin{equation*}
    |\psi(x)-\psi(y)|
\leq
\Lambda |x-y|.
\end{equation*}
Since $\alpha_{1} \geq 0$ and $\Omega$ is an bounded open set, we have $|x|^{\alpha_{1} p} \leq \sup_{x \in \Omega} |x|^{\alpha_{1} p}=: R>0$. Therefore,
\begin{align*}
    I & \leq R \int_\Omega\int_\Omega
\frac{|u(y)|^p|\psi(x)-\psi(y)|^p}
{|x-y|^{N+sp}}
|y|^{\alpha_{2} p}
\,dx\,dy \\ & \leq R \Lambda^{p} \int_\Omega |u(y)|^p|y|^{\alpha_{2} p}
\left(
\int_{\Omega\cap\{|x-y|\leq1\}}
\frac{|x-y|^{p}}
{|x-y|^{N+sp}}
\,dx
\right)dy \\ & \quad + 2 R \int_\Omega |u(y)|^p|y|^{\alpha_{2} p}
\left(
\int_{\Omega\cap\{|x-y|>1\}}
\frac{1}
{|x-y|^{N+sp}}\,dx
\right)dy \\ & \leq C \int_\Omega |u(y)|^p|y|^{\alpha_{2} p} \, dy . 
\end{align*}
Therefore, we obtain
\begin{equation*}
     [\psi u]_{W^{s,p,\alpha}(\Omega)}^p
\leq
2^{p-1}[u]_{W^{s,p,\alpha}(\Omega)}^p
+ C \int_\Omega |u(y)|^p|y|^{\alpha_{2} p} \, dy . 
\end{equation*}
This proves the lemma.
\end{proof}

Finally, we now establish the weighted fractional extension theorem. Using the preceding lemmas, together with a suitable partition of unity and the boundary flattening technique, we show that every open set $\Omega$ with  Lipschitz boundary and $0 \notin \partial \Omega$ is an extension domain for the weighted fractional Sobolev space $W^{s,p,\alpha}$.

\begin{theorem}[Weighted fractional extension theorem]\label{Theorem: fractional extension theorem}
Let $p \geq 1$, $s\in(0,1)$, and let $\alpha_1, \, \alpha_2\in\mathbb{R}$ with  $\alpha=\alpha_1+\alpha_2 \geq 0$ satisfy $ \alpha_{1} p, \, \alpha_{2} p\in(-N,sp)$. Let $\Omega\subset\mathbb{R}^N$ be an  open set of class
$C^{0,1}$ with bounded boundary and assume that $0 \notin \partial \Omega$. Then there exists a constant
$C=C(N,p,s,\alpha_1,\alpha_2,\Omega)>0$ such that, for every
$u\in W^{s,p,\alpha}(\Omega)$,
there exists an extension $\widetilde u\in W^{s,p,\alpha}(\mathbb{R}^N)$ satisfying $\widetilde u=u$
a.e. in $\Omega$ and 
\begin{equation*}
    \|\widetilde u\|_{W^{s,p,\alpha}(\mathbb{R}^N)} \leq C  \|u\|_{W^{s,p,\alpha}(\Omega)}.
\end{equation*}
\end{theorem}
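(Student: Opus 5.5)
The plan is to follow the classical construction of \cite[Section 5]{HitchhikersGuide2012}, using Lemmas \ref{Lemma: Extension 1}, \ref{Lemma: Extension 2} and \ref{Lemma: Extension 3} in place of their unweighted counterparts, together with the bi-Lipschitz flattening from the Appendix. Since $\partial\Omega$ is compact and $0\notin\partial\Omega$, we have $d_0:=\operatorname{dist}(0,\partial\Omega)>0$. We cover $\partial\Omega$ by finitely many balls $B_1,\dots,B_k$ of radius less than $d_0/2$, each admitting a bi-Lipschitz flattening map $T_j:Q\to B_j$ with $T_j(Q_+)=B_j\cap\Omega$ and $T_j(Q_0)=B_j\cap\partial\Omega$. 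Every $B_j$ then stays at distance at least $d_0/2$ from the origin and is contained in a bounded region, so on $\bigcup_j B_j$ the weights $|x|^{\alpha_1 p}$ and $|x|^{\alpha_2 p}$ are bounded above and below by positive constants. Next we choose an open set $B_0\subset\subset\Omega$ with $\Omega\subset\bigcup_{j=0}^k B_j$ and a smooth partition of unity $\psi_0,\dots,\psi_k$ subordinate to this cover, with $0\le\psi_j\le1$ and each $\psi_j$ Lipschitz. We then write $u=\sum_{j=0}^k\psi_j u$.

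For the interior piece, Lemma \ref{Lemma: Extension 3} gives $\psi_0u\in W^{s,p,\alpha}(\Omega)$ with norm controlled by $\|u\|$. Since $\psi_0u$ vanishes outside the compact set $\operatorname{supp}\psi_0\subset\subset\Omega$, Lemma \ref{Lemma: Extension 1} extends it by zero. For the boundary pieces, we set $v_j:=(\psi_ju)\circ T_j$ on $Q_+$. On $B_j\cap\Omega$ and on $Q_+$ the weights are comparable to constants, so the weighted norm is equivalent to the unweighted $W^{s,p}$ norm there, with constants depending on $d_0$ and $\operatorname{diam}$. Consequently the unweighted bi-Lipschitz change-of-variables estimate applies and gives $\|v_j\|_{W^{s,p,\alpha}(Q_+)}\le C\|\psi_ju\|_{W^{s,p,\alpha}(\Omega\cap B_j)}$. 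Here we use that $|T_j(x)-T_j(y)|\simeq|x-y|$ and that the Jacobians are bounded. A cleaner alternative, to be checked against the Appendix, is to work directly with the weighted seminorm and the bounds $|T_j(x)|\simeq|x|$ there. Lemma \ref{Lemma: Extension 2} reflects $v_j$ evenly to $\overline v_j\in W^{s,p,\alpha}(Q)$. We then set $w_j:=\overline v_j\circ T_j^{-1}$ on $B_j$, which again has controlled norm. Since $\operatorname{supp}\psi_j$ is compact in $B_j$, $w_j$ vanishes near $\partial B_j$ outside a compact set. We may therefore multiply by a cutoff equal to $1$ on $\operatorname{supp}\psi_j$ (Lemma \ref{Lemma: Extension 3}) and apply Lemma \ref{Lemma: Extension 1} on $B_j$ to extend $w_j$ by zero to $\widetilde w_j\in W^{s,p,\alpha}(\mathbb{R}^N)$. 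By construction $\widetilde w_j=\psi_ju$ on $\Omega$.

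Finally we define $\widetilde u:=\widetilde{\psi_0u}+\sum_{j=1}^k\widetilde w_j$. Then $\widetilde u=u$ a.e.\ in $\Omega$, and the triangle inequality gives $\|\widetilde u\|_{W^{s,p,\alpha}(\mathbb{R}^N)}\le C\sum_j\|\psi_ju\|_{W^{s,p,\alpha}(\Omega)}\le C\|u\|_{W^{s,p,\alpha}(\Omega)}$. One point must be handled when $\Omega$ is unbounded (bounded boundary but exterior domain): Lemma \ref{Lemma: Extension 3} is stated for bounded sets, but we only apply it to $\psi_j u$ restricted to the bounded sets $\Omega\cap B_j$, or to the bounded interior part. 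For an unbounded interior piece, Lemma \ref{Lemma: Extension 1} can be reproved verbatim, since its tail estimate only uses that $\operatorname{supp}$ is at positive distance from $\mathbb{R}^N\setminus\Omega$. If $\Omega$ itself contains the origin, nothing changes, because the weights never appear near $0$ in the boundary charts and the interior pieces are handled by zero extension.

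The main obstacle is the localization step. We must pass from $\|\psi_ju\|_{W^{s,p,\alpha}(\Omega)}$, a nonlocal quantity on all of $\Omega$, to norms on $\Omega\cap B_j$ and back, and in particular show that the zero extension from $B_j$ (or from $\Omega\cap B_j$) to $\Omega$ or $\mathbb{R}^N$ costs only a multiple of the $\mathcal L^{p,\alpha}$ norm. I would handle this exactly as in the proof of Lemma \ref{Lemma: Extension 1}: split the cross integrals into $|y|<2R$ and $|y|>2R$, using $\alpha_jp>-N$ for local integrability of the weight and $\alpha_jp<sp$ for decay at infinity, with the positive distance from $\operatorname{supp}\psi_j$ to $\partial B_j$ playing the role of $d$.
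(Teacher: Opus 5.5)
Your proposal is correct and is essentially the paper's proof. Both arguments use a finite cover of $\partial\Omega$ by balls avoiding the origin with bi-Lipschitz flattenings and a partition of unity. The interior piece is handled by Lemmas~\ref{Lemma: Extension 3} and~\ref{Lemma: Extension 1}, and each boundary piece is flattened, reflected (Lemma~\ref{Lemma: Extension 2}), pulled back, cut off and extended by zero. The weights are harmless because the charts stay a fixed distance from the origin. The paper phrases this as $|T_j^{-1}(x)|\sim|x|$, and its Appendix construction is what guarantees $0\notin Q^j$; your claim that the weights are comparable to constants on $Q_+$ tacitly needs exactly this.

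The only differences are cosmetic. You apply $\psi_j$ before flattening rather than after, and you address exterior domains, a case the paper passes over. There, however, $\psi_0u$ is not supported in a bounded set, so you still need a Lemma~\ref{Lemma: Extension 3}-type bound for $\psi_0u$ on unbounded $\Omega$. That bound holds because $1-\psi_0$ is compactly supported. Equivalently, write $\psi_0u=u-\sum_{j\ge1}\psi_ju$ and control each $\psi_ju$ on $\Omega$ by the cross-term estimate from the proof of Lemma~\ref{Lemma: Extension 1}.
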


\begin{proof}
Let $\Omega\subset\mathbb{R}^N$ be an  open set of class
$C^{0,1}$ with bounded boundary and $0 \notin \partial \Omega$. Since $\partial\Omega$ is compact, there exist finitely many balls $B_1,\ldots,B_k$ such that
\begin{equation*}
  \partial\Omega\subset\bigcup_{j=1}^k B_j.  
\end{equation*}
We choose the balls sufficiently small so that, for each
$j\in\{1,\ldots,k\}$, $0 \notin B_{j}$, and there exists a bi-Lipschitz map $T_j:Q^{j}\longrightarrow B_j$ of class $C^{0,1}$ which flattens the boundary, namely $T_j(Q^{j}_+)=B_j\cap\Omega$, where $Q^{j}$ is chosen in such a way that $0 \notin Q^{j}$ (see Subsection \ref{Appendix: Subsection}, Appendix \ref{Appendix}), i.e., 
\begin{equation*}
  Q^{j} \subset (R^{j}_{1},R^{j}_{2})^{N-1} \times (-R^{j},R^{j}),
\quad
Q^{j}_+=Q^{j}\cap\{x_N>0\}, \quad  T^{-1}_{j}(\partial \Omega \cap B_{j}) = \{ x \in Q^{j} : x_{N} = 0 \},
\end{equation*}
where $R^{j}_{1}, \, R^{j}_{2}, \, R^{j} >0 $.
Therefore, by Subsection \ref{Appendix: Subsection} (see Appendix \ref{Appendix}), for any $x \in B_{j}$, we have
\begin{equation*}
    |T^{-1}_{j}(x)| \sim |x|.
\end{equation*}
We take a smooth partition of unity $\psi_0, \, \psi_1,  \ldots, \, \psi_k$ such that
\begin{equation*}
    0\leq\psi_j\leq1,
\qquad
\sum_{j=0}^k\psi_j=1,
\end{equation*}
with $\operatorname{supp} \psi_0 \subset  \mathbb{R}^{N} \setminus \partial \Omega$ and
\begin{equation*}
\operatorname{supp}\psi_j\subset \subset B_j,
\qquad \text{for }
j=1,\ldots,k.   
\end{equation*}
Hence
\begin{equation*}
    u=\sum_{j=0}^k\psi_j u
\qquad\text{in }\Omega.
\end{equation*}

We first consider the term $\psi_0u$. Since
$\operatorname{supp}\psi_0 \equiv 0$ in a neighbourhood of $\partial\Omega$. Therefore, we can
define
\begin{equation*}
   \widetilde{\psi_0u}(x)
:=
\begin{cases}
\psi_0(x)u(x), & x \in \Omega,\\
0, & x \in \mathbb{R}^N \setminus \Omega.
\end{cases} 
\end{equation*}
By the weighted version of Lemma \ref{Lemma: Extension 1} and Lemma \ref{Lemma: Extension 3},
\begin{equation}\label{ineq02}
\|\widetilde{\psi_0u}\|_{W^{s,p,\alpha}(\mathbb{R}^N)} 
\leq
C  \|\psi_0 u\|_{W^{s,p,\alpha}(\Omega)}  \leq 
C \|u\|_{W^{s,p,\alpha}(\Omega)} .
\end{equation}

We now consider $j\in\{1,\ldots,k\}$. Define
\begin{equation*}
    v_j(y):=u(T_j(y)),
\qquad y\in Q^{j}_+.
\end{equation*}
We claim that $v_j\in W^{s,p,\alpha}(Q^{j}_+)$. Indeed, since $T_j$ is bi-Lipschitz, there exists a constant
$C>0$ such that
\begin{equation*}
    C^{-1}|x-y|
\leq
|T_j^{-1}(x)-T_j^{-1}(y)|
\leq
C|x-y|.
\end{equation*}
By the change of variables $x=T_j(\xi)$ and $y=T_j(\eta)$, applying $|T^{-1}_{j}(x)| \sim |x|$ for all $x \in \Omega \cap B_{j}$, and using the bi-Lipschitz property of $T_j$, we obtain
\begin{align*}
 & [v_j]_{W^{s,p,\alpha}(Q^{j}_+)}^p
=\int_{Q^{j}_+}\int_{Q^{j}_+}
  \frac{|u(T_j(\xi))-u(T_j(\eta))|^p}
  {|\xi-\eta|^{N+sp}}
  |\xi|^{\alpha_1p}|\eta|^{\alpha_2p}
  \,d\xi\,d\eta
  \\
  & \qquad \leq
  C
  \int_{\Omega\cap B_j}\int_{\Omega\cap B_j}
  \frac{|u(x)-u(y)|^p}
  {|x-y|^{N+sp}}
  |x|^{\alpha_1p}|y|^{\alpha_2p}
  \left|\det DT_j^{-1}(x)\right|
  \left|\det DT_j^{-1}(y)\right|
  \,dx\,dy.
\end{align*}
Since $T_j^{-1}$ is Lipschitz, it is differentiable almost everywhere and
\begin{equation*}
     \left|\det DT_j^{-1}(x)\right|
  \leq
  \operatorname{Lip}(T_j^{-1})^N
  \qquad\text{for a.e. }x\in B_j.
\end{equation*}
Consequently,
\begin{equation*}
    \left|\det DT_j^{-1}(x)\right|
  \left|\det DT_j^{-1}(y)\right|
  \leq C_j
  \qquad\text{for a.e. }(x,y)\in B_j\times B_j.
\end{equation*}
Therefore,
\begin{align*}
  [v_j]_{W^{s,p,\alpha}(Q^{j}_+)}^p
  \leq C
 \int_{\Omega\cap B_j}\int_{\Omega\cap B_j}
  \frac{|u(x)-u(y)|^p}
  {|x-y|^{N+sp}}
|x|^{\alpha_1p}|y|^{\alpha_2p}
  \,dx\,dy
\end{align*}
Here and below, $C$ denotes a generic constant depending, in particular,
on the Lipschitz constants of the finitely many maps $T_j$ and
$T_j^{-1}$. We now apply the weighted reflection Lemma \ref{Lemma: Extension 2} to $v_j$. Define
\begin{equation*}
   \overline{v_j}(y',y_N)
:=
\begin{cases}
v_j(y',y_N),&y_N>0,\\
v_j(y',-y_N),&y_N<0.
\end{cases} 
\end{equation*}
Then $\overline{v_j}\in W^{s,p,\alpha}(Q^{j})$ and
\begin{equation*} 
\|\overline{v_j}\|_{W^{s,p,\alpha}(Q^{j})}
\leq
4 \|v_j\|_{W^{s,p,\alpha}(Q^{j}_+)} \leq C\|u\|_{W^{s,p,\alpha}(\Omega\cap B_j)}.
\end{equation*}

Define
\[
w_j(x)
:=
\overline{v_j}(T_j^{-1}(x)),
\qquad x\in B_j.
\]
Again, by the bi-Lipschitz property of $T_j$ and the corresponding
weighted change-of-variables estimate, $w_j\in W^{s,p,\alpha}(B_j)$
and
\begin{equation*}
 \|w_j\|_{W^{s,p,\alpha}(B_j)}
\leq
C\|\overline{v_j}\|_{W^{s,p,\alpha}(Q^{j})} \leq
C\|u\|_{W^{s,p,\alpha}(\Omega\cap B_j)}.   
\end{equation*}
Observe that $w_j=u$ in $B_j\cap\Omega$. Therefore,
\begin{equation*}
    \psi_jw_j=\psi_ju
\qquad\text{in }B_j\cap\Omega.
\end{equation*}
Since $\psi_j$ has compact support in $B_j$, Lemma \ref{Lemma: Extension 3} gives
\begin{equation*}
  \|\psi_j w_j\|_{W^{s,p,\alpha}(B_j)}
\leq
C\| w_j\|_{W^{s,p,\alpha}(B_j)} \leq C\|u\|_{W^{s,p,\alpha}(\Omega\cap B_j)}.  
\end{equation*}

Since $\psi_jw_j$ has compact support in $B_j$, we may extend it
by zero outside $B_j$. By the weighted version of Lemma \ref{Lemma: Extension 1}, this
extension, denoted by $\widetilde{\psi_jw_j}$, belongs to $W^{s,p,\alpha}(\mathbb{R}^N)$ and satisfies
\begin{equation*} 
\|\widetilde{\psi_jw_j}\|_{W^{s,p,\alpha}(\mathbb{R}^N)}
\leq
C \|\psi_jw_j\|_{W^{s,p,\alpha}(B_j)} . 
\end{equation*}
Combining the above two inequalities, we obtain
\begin{equation}\label{ineq03}
\|\widetilde{\psi_jw_j}\|_{W^{s,p,\alpha}(\mathbb{R}^N)} \leq C \|u\|_{W^{s,p,\alpha}(\Omega \cap B_j)}.    
\end{equation}

Finally, define
\[
\widetilde u
:=
\widetilde{\psi_0u}
+
\sum_{j=1}^k\widetilde{\psi_jw_j}.
\]
By construction,
\[
\widetilde u=u
\qquad\text{a.e. in }\Omega.
\]
Furthermore, using the triangle inequality and combining \eqref{ineq02} and \eqref{ineq03}, we obtain
\begin{equation*}
   \|\widetilde u\|_{W^{s,p,\alpha}(\mathbb{R}^N)} \leq  C \|u\|_{W^{s,p,\alpha}(\Omega)}.
\end{equation*}
Here, the constant $C$ may initially depend on the bi-Lipschitz maps $T_j$ and the sets $B_j$, where $j=0,1,\ldots,k$, in addition to $N,p,s,\alpha_1,\alpha_2,$ and $\Omega$. Since the collection $\{T_j,B_j\}_{j=0}^k$ is finite, we may choose $C$ to be the maximum of the corresponding constants. Thus, $C$ can be chosen uniformly so that it depends only on  $N,p,s,\alpha_1,\alpha_2$, and $\Omega$. and is independent of the particular maps $T_j$ and sets $B_j$, where $j=0,1,\ldots,k$. This proves the theorem.
\end{proof}

\section{Weighted fractional Sobolev inequalities}\label{Section : 6}

In this section, we establish a weighted fractional Sobolev inequality on bounded Lipschitz domains. Our approach is based on first deriving the corresponding weighted fractional Sobolev inequality on $\mathbb{R}^{N}$. This inequality follows from the fractional Caffarelli--Kohn--Nirenberg inequality established by Nguyen and Squassina \cite{Squassina2018}, by choosing suitable values of the parameters. We then employ the extension theorem proved in the previous section to transfer the inequality from $\mathbb{R}^{N}$ to bounded Lipschitz domains. In addition, we establish a Poincar\'e-type inequality in the corresponding weighted fractional Sobolev space.

\smallskip

Nguyen and Squassina in \cite{Squassina2018} studied Caffarelli–Kohn–Nirenberg type inequalities in fractional Sobolev spaces and proved the following theorem. Let $N \geq 1$, $p>1$, $q \geq 1$, $\tau >0$, $a \in (0,1]$, and $\alpha, \beta, \gamma \in \mathbb{R}$ satisfy
\begin{equation*}
    \frac{1}{\tau}+ \frac{\gamma}{N} = a \left( \frac{1}{p} + \frac{\alpha-s}{N} \right) + (1-a) \left( \frac{1}{q} + \frac{\beta}{N} \right)  \ .
\end{equation*}
If $a>0$, assume also that, with $\gamma = a \sigma + (1-a) \beta$,
\begin{equation*}
    0 \leq \alpha - \sigma
\end{equation*}
and 
\begin{equation*}
    \alpha - \sigma \leq s \hspace{.5cm} \text{if} \hspace{.5cm} \frac{1}{\tau}+ \frac{\gamma}{N} = \frac{1}{p}+ \frac{\alpha-s}{N}  .
\end{equation*}
If $\frac{1}{\tau} + \frac{\gamma}{N} >0$, then we have
\begin{equation}\label{fractional CKN ineq}
          \| |x|^{\gamma}u \|_{L^{\tau}(\mathbb{R}^N)} \leq C [u]^{a}_{W^{s,p, \alpha}(\mathbb{R}^N)} \||x|^{\beta} u \|^{(1-a)}_{L^{q} (\mathbb{R}^N)},  \hspace{.3cm}  \forall \ u  \in C^{1}_{c}(\mathbb{R}^N)  .
      \end{equation}
\smallskip

Now, if we take $\gamma=0$ and $a=1$, the fractional Caffarelli--Kohn--Nirenberg inequality \eqref{fractional CKN ineq} reduces to  
\begin{equation}  \label{frac-sobolev}
\left( \int_{\mathbb{R}^{N}} |u(x)|^{\tau} \, dx \right)^{\!\frac{1}{\tau}}
 \leq C \left( \int_{\mathbb{R}^{N}} \int_{\mathbb{R}^{N}} 
 \frac{ |u(x)-u(y)|^{p}}{|x-y|^{N+sp}} 
 |x|^{\alpha_{1} p} |y|^{\alpha_{2} p}  \, dx \, dy \right)^{\!\frac{1}{p}},
\end{equation}
under the conditions
\begin{equation*}
 0 \leq \alpha < s, \qquad sp- \alpha p < N, \qquad \text{and} \qquad 
   \tau = \frac{Np}{N-sp+ \alpha p}.  
\end{equation*}
The inequality \eqref{frac-sobolev} is nothing but the fractional Sobolev inequality in weighted spaces, with the critical exponent
\begin{equation*}
    p^{*}_{s, \alpha} := \tau = \frac{Np}{N-sp+ \alpha p}, \qquad \text{if} \quad 0< sp- \alpha p < N.
\end{equation*}
We can therefore state the following fundamental result.

\begin{lemma}[Weighted fractional Sobolev inequality]\label{Lemma: Weighted fractional Sobolev inequality}
    Let $p>1$, $s \in (0,1)$, and let $\alpha_{1}, \, \alpha_{2} \in \mathbb{R}$ with $\alpha= \alpha_{1}+ \alpha_{2} \geq 0$ satisfy  $\alpha_{1}p$, $\alpha_{2}p \in (-N, sp)$. Let $0<sp-\alpha p < N$
    and $p^{*}_{s, \alpha}= \frac{Np}{N-sp+\alpha p}$. Then for all $u \in W^{s,p, \alpha}(\mathbb{R}^{N})$, there exists a constant $C=C(N,p,s,\alpha_{1}, \alpha_{2})>0$ such that
    \begin{align}
        \left( \int_{\mathbb{R}^{N}} |u(x)|^{p^{*}_{s, \alpha}} \, dx \right)^{\!\frac{1}{p^{*}_{s, \alpha}}}
        \leq C \left( \int_{\mathbb{R}^{N}} \int_{\mathbb{R}^{N}}
        \frac{ |u(x)-u(y)|^{p}}{|x-y|^{N+sp}} 
        |x|^{\alpha_{1} p} |y|^{\alpha_{2} p}   \, dx \, dy \right)^{\!\frac{1}{p}}   .
    \end{align}
\end{lemma}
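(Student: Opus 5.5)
The plan is to derive the lemma as a specialization of the fractional Caffarelli--Kohn--Nirenberg inequality \eqref{fractional CKN ineq} of Nguyen and Squassina, in the same way Lemma \ref{Lemma: Weighted Sobolev inequality} was obtained from \eqref{CKN ineq}. We take $a=1$, $\gamma=0$ and $\tau=p^{*}_{s,\alpha}$. With $a=1$ the term $\||x|^{\beta}u\|_{L^{q}}$ carries exponent $1-a=0$, so $\beta$ and $q$ are immaterial, and $\sigma=\gamma=0$. Here the weight parameter in \eqref{fractional CKN ineq} is the total weight $\alpha=\alpha_1+\alpha_2$, split as $|x|^{\alpha_1p}|y|^{\alpha_2p}$ in the seminorm $[u]_{W^{s,p,\alpha}(\mathbb{R}^N)}$.

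The first step is to check the hypotheses. The dimensional balance reads $\frac{1}{\tau}=\frac1p+\frac{\alpha-s}{N}=\frac{N-sp+\alpha p}{Np}$, which is exactly $\tau=p^{*}_{s,\alpha}$. Positivity $\frac1\tau>0$ is the assumption $sp-\alpha p<N$. The constraint $0\le\alpha-\sigma=\alpha$ is our hypothesis $\alpha\ge0$. In the equality case, which occurs here since $\gamma=0$ and $a=1$, we also need $\alpha-\sigma=\alpha\le s$; this follows from $sp-\alpha p>0$, which in fact gives $\alpha<s$. The conditions $\alpha_1p,\alpha_2p\in(-N,sp)$ are the standing assumptions under which Nguyen--Squassina's seminorm is finite on $C^1_c$ (cf.\ \cite[Lemma 2.1]{Valdinoci2015}). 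This establishes the inequality for all $u\in C^1_c(\mathbb{R}^N)$ with $C=C(N,p,s,\alpha_1,\alpha_2)$.

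The second step is a density argument. Since $W^{s,p,\alpha}(\mathbb{R}^N)$ is by definition the completion of $C^1_c$ in the norm \eqref{Weighted norm definition}, take $u_k\in C^1_c$ with $u_k\to u$ in that norm. The inequality applied to $u_k-u_m$ shows that $(u_k)$ is Cauchy in $L^{p^{*}_{s,\alpha}}$. Its limit agrees a.e.\ with $u$, because a subsequence also converges in $\mathcal{L}^{p,\alpha}$ and hence a.e. The right-hand side converges, since $[\cdot]_{W^{s,p,\alpha}}$ is a seminorm controlled by the norm. Passing to the limit then gives the inequality for all $u\in W^{s,p,\alpha}(\mathbb{R}^N)$.

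The main point needing care is the first step: confirming that Nguyen--Squassina's theorem indeed covers the split weight $|x|^{\alpha_1p}|y|^{\alpha_2p}$ with possibly negative individual $\alpha_i$, and that its borderline condition $\alpha-\sigma\le s$ is satisfied. As computed above, $\alpha<s$ holds strictly, so no endpoint issue arises.
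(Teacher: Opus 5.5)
Your proposal is correct and follows the paper's route: the paper obtains this lemma by specializing the Nguyen--Squassina fractional Caffarelli--Kohn--Nirenberg inequality \eqref{fractional CKN ineq} to $a=1$, $\gamma=0$, $\tau=p^{*}_{s,\alpha}$, which requires $0\le\alpha<s$ and $sp-\alpha p<N$. Your explicit check of these hypotheses, and your density argument passing from $C^1_c(\mathbb{R}^N)$ to the completion $W^{s,p,\alpha}(\mathbb{R}^N)$ (a step the paper leaves implicit; Fatou's lemma gives $[u_k]_{W^{s,p,\alpha}(\mathbb{R}^N)}\to[u]_{W^{s,p,\alpha}(\mathbb{R}^N)}$ for the a.e.\ limit), are both sound.
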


The next theorem establishes the weighted fractional Sobolev inequality on bounded Lipschitz domains. It follows by combining the weighted fractional extension theorem with the weighted fractional Sobolev inequality on $\mathbb{R}^{N}$.

\begin{theorem}\label{Theorem: Weighted frac Sobolev inequality}
    Let $p>1$, $s \in (0,1)$, and let $\alpha_{1}, \, \alpha_{2} \in \mathbb{R}$ with $\alpha= \alpha_{1}+ \alpha_{2} \geq 0$ satisfy  $\alpha_{1}p$, $\alpha_{2}p \in (-N, sp)$. Let $\Omega$ be a bounded Lipschitz domain and assume that $0 \notin \partial \Omega$, and let $0<sp-\alpha p < N$
    and $p^{*}_{s, \alpha}= \frac{Np}{N-sp+\alpha p}$. Then there exists a constant $C=C(N,p,s,\alpha_{1}, \alpha_{2}, \Omega)>0$ such that
    \begin{equation}
  \|u\|_{L^{p^{*}_{s, \alpha}}(\Omega)}
        \leq C \|u\|_{W^{s,p, \alpha}(\Omega)}, \quad \forall \, u \in W^{s,p, \alpha}(\Omega) .
    \end{equation}
\end{theorem}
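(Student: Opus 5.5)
The argument mirrors the proof of Theorem \ref{Theorem: Weighted Sobolev inequality on bounded domain}, with the local ingredients replaced by their fractional counterparts. Let $u\in W^{s,p,\alpha}(\Omega)$.

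The first step is to extend $u$. A bounded Lipschitz domain is an open set of class $C^{0,1}$ with bounded boundary, and by hypothesis $0\notin\partial\Omega$. Theorem \ref{Theorem: fractional extension theorem} therefore applies and produces $\widetilde u\in W^{s,p,\alpha}(\mathbb{R}^N)$ with $\widetilde u=u$ a.e. in $\Omega$ and
\begin{equation*}
\|\widetilde u\|_{W^{s,p,\alpha}(\mathbb{R}^N)}\leq C\|u\|_{W^{s,p,\alpha}(\Omega)},
\end{equation*}
where $C=C(N,p,s,\alpha_1,\alpha_2,\Omega)$.

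The second step is to apply Lemma \ref{Lemma: Weighted fractional Sobolev inequality} to $\widetilde u$. Its hypotheses coincide with those of the theorem: $p>1$, $\alpha\ge0$, $\alpha_1p,\alpha_2p\in(-N,sp)$, and $0<sp-\alpha p<N$. The lemma gives
\begin{equation*}
\|\widetilde u\|_{L^{p^*_{s,\alpha}}(\mathbb{R}^N)}\leq C[\widetilde u]_{W^{s,p,\alpha}(\mathbb{R}^N)}.
\end{equation*}
The lemma is stated on $W^{s,p,\alpha}(\mathbb{R}^N)$, which is defined as the completion of $C^1_c$. If needed, I would justify its use for a general element as follows. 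Take $\varphi_k\in C^1_c$ with $\varphi_k\to\widetilde u$ in $W^{s,p,\alpha}$. The inequality applied to $\varphi_k-\varphi_m$ shows that $(\varphi_k)$ is Cauchy in $L^{p^*_{s,\alpha}}$. Its limit agrees a.e. with $\widetilde u$, because $\varphi_k\to\widetilde u$ in $\mathcal{L}^{p,\alpha}$, and a subsequence therefore converges a.e.

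The third step chains these estimates. Trivially $[\widetilde u]_{W^{s,p,\alpha}(\mathbb{R}^N)}\leq\|\widetilde u\|_{W^{s,p,\alpha}(\mathbb{R}^N)}$, so
\begin{equation*}
\|u\|_{L^{p^*_{s,\alpha}}(\Omega)}\leq\|\widetilde u\|_{L^{p^*_{s,\alpha}}(\mathbb{R}^N)}\leq C\|\widetilde u\|_{W^{s,p,\alpha}(\mathbb{R}^N)}\leq C\|u\|_{W^{s,p,\alpha}(\Omega)}.
\end{equation*}
The resulting constant depends only on $N,p,s,\alpha_1,\alpha_2,\Omega$.

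All the substantive work is contained in the extension theorem, which has already been proved. The only point needing care is compatibility of hypotheses: the extension must land in the space on which the fractional Caffarelli--Kohn--Nirenberg inequality holds, and the parameter ranges of Theorem \ref{Theorem: fractional extension theorem} and Lemma \ref{Lemma: Weighted fractional Sobolev inequality} must match. They do match, so I expect no genuine obstacle beyond the density remark in the second step.
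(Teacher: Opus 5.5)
Your proposal is correct and takes the same route as the paper's proof. Both extend $u$ by the weighted fractional extension theorem, apply the weighted fractional Sobolev inequality on $\mathbb{R}^N$ to $\widetilde u$, and chain this with $[\widetilde u]_{W^{s,p,\alpha}(\mathbb{R}^N)}\le\|\widetilde u\|_{W^{s,p,\alpha}(\mathbb{R}^N)}$ and $\|u\|_{L^{p^*_{s,\alpha}}(\Omega)}\le\|\widetilde u\|_{L^{p^*_{s,\alpha}}(\mathbb{R}^N)}$. Your density remark is a harmless addition the paper omits, since its lemma is already stated for all of $W^{s,p,\alpha}(\mathbb{R}^N)$; the remark is valid given that the extension theorem asserts $\widetilde u$ lies in the completion of $C^1_c$.
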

\begin{proof}
Let $\Omega\subset\mathbb{R}^N$ be a bounded Lipschitz domain and let $u\in W^{s,p,\alpha}(\Omega)$. By Theorem \ref{Theorem: fractional extension theorem}, there exists an extension $\widetilde{u}\in W^{s,p,\alpha}(\mathbb{R}^N)$ such that 
\begin{equation*}
        \widetilde{u}=u \qquad\text{a.e. in }\Omega,
    \end{equation*}
and
\begin{equation*}
        \| \widetilde{u} \|_{W^{s,p, \alpha}(\mathbb{R}^{N})} \leq C  \| u \|_{W^{s,p, \alpha}(\Omega)}, 
\end{equation*}
where $C=C(N,p,s,\alpha_{1}, \alpha_{2}, \Omega)>0$.  On the other hand, by Lemma \ref{Lemma: Weighted fractional Sobolev inequality}, there exists a constant  $C=C(N,p,s, \alpha_{1}, \alpha_{2})>0$ such that
    \begin{equation*}
    \|\widetilde{u}\|_{L^{p^{*}_{s, \alpha}}(\mathbb{R}^{N})}  \leq  C  [ \widetilde{u} ]_{W^{s,p, \alpha}(\mathbb{R}^{N})},
    \end{equation*}
    where $p^{*}_{s, \alpha} = \frac{Np}{N-sp+\alpha p}$. Since  $[\widetilde{u}]_{W^{s,p,\alpha}(\mathbb{R}^N)} \leq \|\widetilde{u}\|_{W^{s,p,\alpha}(\mathbb{R}^N)}$, we have \begin{equation*} \|\widetilde{u}\|_{L^{p^{*}_{s, \alpha}}(\mathbb{R}^N)} \leq C \|\widetilde{u}\|_{W^{s,p,\alpha}(\mathbb{R}^N)}. \end{equation*}
Combining this with the extension estimate yields 
\begin{equation*} \|\widetilde{u}\|_{L^{p^{*}_{s, \alpha}}(\mathbb{R}^N)} \leq C \|u\|_{W^{s,p,\alpha}(\Omega)}. \end{equation*} 
Finally, using $\|u\|_{L^{p^{*}_{s, \alpha}}(\Omega)} \leq \|\widetilde{u}\|_{L^{p^{*}_{s, \alpha}}(\mathbb{R}^N)}$, we have
    \begin{equation*}
        \|u\|_{L^{p^{*}_{s, \alpha}}(\Omega)}  \leq  C \| u \|_{W^{s,p, \alpha}(\Omega)}.
    \end{equation*}
 This completes the proof.
\end{proof}

The next lemma proves the weighted fractional Poincar\'e inequality, which bounds the weighted $L^{p, \alpha_{i}}$-norm of $u-(u)_{\Omega}$ by the weighted fractional Gagliardo seminorm of $u$. Recall $(u)_{\Omega}$ denotes the average of $u$ over $\Omega$, i.e.,
\begin{equation*}
    (u)_{\Omega} = \frac{1}{|\Omega|} \int_{\Omega} u(x) \, dx = \fint_{\Omega} u(x) \, dx,
\end{equation*}
where $|\Omega|$ is the Lebesgue measure of $\Omega$.

\begin{lemma}[Weighted fractional Poincar\'e inequality]\label{Lemma: Weighted frac Poincare}
Let $p>1$, $s\in(0,1)$, and let $\alpha_1,\alpha_2\in\mathbb{R}$ with $\alpha:=\alpha_1+\alpha_2$ satisfy $\alpha_{1} p,\alpha_{2} p\in(-N,sp)$ and $\alpha_{1}p + \alpha_{2}p >-N$. Let $\Omega$ be a bounded open set with $S_{1} = \inf_{x \in \Omega} |x|>0$ and $S_{2} = \sup_{x \in \Omega} |x|<\infty$. Then for all $u \in W^{s,p, \alpha}(\Omega)$,
 \begin{equation}
       \int_{\Omega}  |u(x) - (u)_{\Omega}|^{p} |x|^{\alpha_{i}p}  \, dx
        \leq \frac{(\operatorname{diam}\Omega)^{N+sp}}
{|\Omega|\min \left\{S^{\alpha_{j}p}_{1},S^{\alpha_{j}p}_{2} \right\}} \int_{\Omega}\int_{\Omega}
\frac{|u(x)-u(y)|^{p}}
{|x-y|^{N+sp}}
|x|^{\alpha_{1}p}|y|^{\alpha_{2}p}\,dy\,dx,
    \end{equation}
    where $i,j \in \{1,2\}$ and $i \neq j $.
\end{lemma}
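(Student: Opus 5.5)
The estimate follows from a direct averaging argument with no Sobolev machinery. Fix $i\neq j$. For a.e. $x\in\Omega$ we write
\begin{equation*}
u(x)-(u)_{\Omega}=\fint_{\Omega}\bigl(u(x)-u(y)\bigr)\,dy ,
\end{equation*}
and apply Jensen's (or H\"older's) inequality to the probability measure $dy/|\Omega|$. Since $p>1$, this gives
\begin{equation*}
|u(x)-(u)_{\Omega}|^{p}\le \frac{1}{|\Omega|}\int_{\Omega}|u(x)-u(y)|^{p}\,dy .
\end{equation*}
We multiply by $|x|^{\alpha_i p}$ and integrate over $\Omega$. This yields the double integral
\begin{equation*}
\frac{1}{|\Omega|}\int_{\Omega}\int_{\Omega}|u(x)-u(y)|^{p}\,|x|^{\alpha_i p}\,dy\,dx .
\end{equation*}

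Next we insert the kernel and the missing weight. For $x,y\in\Omega$ we have $|x-y|\le\operatorname{diam}\Omega$, so
\begin{equation*}
1\le \frac{(\operatorname{diam}\Omega)^{N+sp}}{|x-y|^{N+sp}} .
\end{equation*}
Since $S_1\le|y|\le S_2$ on $\Omega$, the function $|y|^{\alpha_j p}$ is bounded below by $\min\{S_1^{\alpha_j p},S_2^{\alpha_j p}\}$; taking the minimum handles both signs of $\alpha_j$. Hence
\begin{equation*}
1\le \frac{|y|^{\alpha_j p}}{\min\{S_1^{\alpha_j p},S_2^{\alpha_j p}\}} .
\end{equation*}
Multiplying the integrand by both factors gives
\begin{equation*}
\int_{\Omega}|u-(u)_{\Omega}|^{p}|x|^{\alpha_i p}\,dx\le \frac{(\operatorname{diam}\Omega)^{N+sp}}{|\Omega|\min\{S_1^{\alpha_j p},S_2^{\alpha_j p}\}}\int_{\Omega}\int_{\Omega}\frac{|u(x)-u(y)|^{p}}{|x-y|^{N+sp}}|x|^{\alpha_i p}|y|^{\alpha_j p}\,dy\,dx .
\end{equation*}

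It remains to identify the right-hand side with the weighted Gagliardo seminorm $[u]^p_{W^{s,p,\alpha}(\Omega)}$. For $i=1$, $j=2$ this is immediate. For $i=2$, $j=1$ we swap the names of $x$ and $y$, using Fubini and the symmetry of $|u(x)-u(y)|^p|x-y|^{-N-sp}$. After the swap the weight becomes $|x|^{\alpha_1p}|y|^{\alpha_2p}$, matching the definition.

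There is no real obstacle. The only points needing care are the following:
\begin{itemize}
\item the integrability that justifies Fubini, which holds because $u\in W^{s,p,\alpha}(\Omega)$ and the weights are bounded above and below on $\Omega$, so $u\in L^p(\Omega)$ and $(u)_\Omega$ is finite;
\item the correct choice of $\min\{S_1^{\alpha_jp},S_2^{\alpha_jp}\}$ when $\alpha_j<0$.
\end{itemize}
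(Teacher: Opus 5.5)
Your proposal is correct and matches the paper's proof essentially step for step: Jensen's inequality for the probability measure $dy/|\Omega|$, then the bounds $|x-y|\le\operatorname{diam}\Omega$ and $|y|^{\alpha_j p}\ge\min\{S_1^{\alpha_j p},S_2^{\alpha_j p}\}$. For the case $i=2$, your explicit relabeling $x\leftrightarrow y$ (via Tonelli and the symmetry of the kernel) is a more careful version of the paper's brief remark that this case follows "by interchanging $\alpha_1$ and $\alpha_2$."
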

\begin{proof}
First assume $i=1$. By Jensen's inequality, we have
\begin{align*}
\int_{\Omega}|u(x)-(u)_{\Omega}|^{p}|x|^{\alpha_{1}p}\,dx
&=
\int_{\Omega}
\left|
\frac{1}{|\Omega|}
\int_{\Omega}\big(u(x)-u(y)\big)\,dy
\right|^{p}
|x|^{\alpha_{1}p}\,dx \\
&\leq
\frac{1}{|\Omega|}
\int_{\Omega}\int_{\Omega}
|u(x)-u(y)|^{p}|x|^{\alpha_{1}p}\,dy\,dx.
\end{align*}
Since $S_{1}<|y|<S_{2}$ for every $y\in\Omega$, we have $|y|^{\alpha_{2}p}\geq \min \left\{S^{\alpha_{2}p}_{1},S^{\alpha_{2}p}_{2} \right\}>0$. Moreover, $|x-y|\leq\operatorname{diam} \Omega$ for all $x,y\in\Omega$. Therefore,
\begin{align*}
\int_{\Omega}|u(x)-(u)_{\Omega}|^{p}|x|^{\alpha_{1}p}\,dx
\leq
\frac{(\operatorname{diam}\Omega)^{N+sp}}
{|\Omega|\min \left\{S^{\alpha_{2}p}_{1},S^{\alpha_{2}p}_{2}\right\}}
\int_{\Omega}\int_{\Omega}
\frac{|u(x)-u(y)|^{p}}
{|x-y|^{N+sp}}
|x|^{\alpha_{1}p}|y|^{\alpha_{2}p}\,dy\,dx.
\end{align*}
Similarly, for $i=2$, the desired inequality follows by interchanging $\alpha_{1}$ and $\alpha_{2}$. This completes the proof.
\end{proof}

The next lemma establishes a scale-invariant weighted fractional Poincar\'e--Sobolev inequality on annuli. In particular, it gives a uniform estimate on $\Omega_\lambda$ for every $\lambda>0$ by exploiting the scaling properties of the weighted fractional seminorm.

\begin{lemma}\label{Lemma: Weighted frac Sobolev ineq Lambda}
Let $p>1$, $s\in(0,1)$, and let $\alpha_1,\alpha_2\in\mathbb{R}$ satisfy $\alpha:=\alpha_1+\alpha_2\geq0$, $\alpha_{1} p,\alpha_{2} p\in(-N,sp)$ and $0<sp-\alpha p<N$. Let $\lambda>0$ and set $\Omega_{\lambda}:=\{x\in\mathbb{R}^N:\lambda<|x|<n \lambda\}$, where $n \in \mathbb{N}$ and $n>1$, 
and define $p^*_{s,\alpha}:=\frac{Np}{N-sp+\alpha p}$.
Then there exists a constant $C=C(N,p,s,\alpha_1,\alpha_2,n)>0$ such that, for every $u\in W^{s,p,\alpha}(\Omega_\lambda)$,
\begin{align*}
 & \left( \frac{1}{|\Omega_{\lambda}|} \int_{\Omega_{\lambda}}  |u(x)-(u)_{\Omega_\lambda}|^{p^*_{s,\alpha}} \, dx \right)^{\frac{1}{p^{*}_{s, \alpha}}} \\ & \hspace{2cm}
\leq
C\left(
\lambda^{sp-\alpha p-N}
\int_{\Omega_\lambda}\int_{\Omega_\lambda}
\frac{|u(x)-u(y)|^p}
{|x-y|^{N+sp}}
|x|^{\alpha_{1} p}|y|^{\alpha_{2} p}
\,dx\,dy
\right)^{\frac1p}.  
\end{align*}
\end{lemma}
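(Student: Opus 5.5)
The plan mirrors the local Lemma \ref{Lemma: Weighted Sobolev ineq Lambda}: prove the estimate first for $\lambda=1$ by combining a Sobolev inequality on the fixed annulus with a Poincar\'e inequality, and then pass to general $\lambda$ by scaling.

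\textbf{Step 1 ($\lambda=1$).} Here $\Omega_1=\{1<|x|<n\}$ is a bounded Lipschitz (indeed smooth) domain with $0\notin\partial\Omega_1$. We apply Theorem \ref{Theorem: Weighted frac Sobolev inequality} to $u-(u)_{\Omega_1}$, which gives
\begin{equation*}
\|u-(u)_{\Omega_1}\|_{L^{p^*_{s,\alpha}}(\Omega_1)}\le C\Big(\|u-(u)_{\Omega_1}\|^p_{\mathcal{L}^{p,\alpha}(\Omega_1)}+[u]^p_{W^{s,p,\alpha}(\Omega_1)}\Big)^{1/p}.
\end{equation*}
The seminorm is unchanged by subtracting a constant. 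For the weighted $L^p$ part, $\|u-(u)_{\Omega_1}\|^p_{\mathcal{L}^{p,\alpha}(\Omega_1)}$ is the sum of the two integrals with weights $|x|^{\alpha_1p}$ and $|x|^{\alpha_2p}$. Lemma \ref{Lemma: Weighted frac Poincare}, applied with $i=1,2$, $S_1=1$, $S_2=n$, bounds each by a constant depending on $N,p,s,\alpha_1,\alpha_2,n$ times $[u]^p_{W^{s,p,\alpha}(\Omega_1)}$. Dividing by the fixed quantity $|\Omega_1|^{1/p^*_{s,\alpha}}$ yields the claim for $\lambda=1$.

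\textbf{Step 2 (scaling).} For general $\lambda$, set $v(x)=u(\lambda x)$ on $\Omega_1$. The change of variables $X=\lambda x$ gives $(v)_{\Omega_1}=(u)_{\Omega_\lambda}$. It also shows that the left-hand side for $v$ equals the normalized mean oscillation of $u$ on $\Omega_\lambda$, since the average $\frac{1}{|\Omega_\lambda|}\int_{\Omega_\lambda}$ is scale-invariant. For the seminorm, substitute $x=X/\lambda$, $y=Y/\lambda$. This produces a factor $\lambda^{-2N}$ from $dx\,dy$, a factor $\lambda^{N+sp}$ from the kernel, and a factor $\lambda^{-\alpha_1p-\alpha_2p}=\lambda^{-\alpha p}$ from the weights. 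Hence
\begin{equation*}
[v]^p_{W^{s,p,\alpha}(\Omega_1)}=\lambda^{sp-\alpha p-N}[u]^p_{W^{s,p,\alpha}(\Omega_\lambda)},
\end{equation*}
which is exactly the factor in the statement.

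\textbf{Expected obstacle.} The only nontrivial point is the $\lambda=1$ step. There we must check that the hypotheses of Theorem \ref{Theorem: Weighted frac Sobolev inequality} hold on the annulus and that both weighted $L^p$ terms are controlled by the two-weight seminorm. The latter works because $|y|^{\alpha_jp}$ is bounded below on $\Omega_1$ (by $\min\{1,n^{\alpha_jp}\}$), which is what Lemma \ref{Lemma: Weighted frac Poincare} provides. Everything after that is bookkeeping of the constants and the scaling.
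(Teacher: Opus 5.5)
Your proposal is correct and follows essentially the same route as the paper. The paper also proves the case $\lambda=1$ by applying Theorem~\ref{Theorem: Weighted frac Sobolev inequality} to $u-(u)_{\Omega_1}$ together with Lemma~\ref{Lemma: Weighted frac Poincare} (with $S_1=1$, $S_2=n$), and then rescales via $v(x)=u(\lambda x)$ to pick up the factor $\lambda^{sp-\alpha p-N}$; your explicit use of both indices $i=1,2$ for the two weights is simply a more careful version of the step the paper states in one line.
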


\begin{proof}
  First assume $\lambda=1$. By Theorem \ref{Theorem: Weighted frac Sobolev inequality}, we have
  \begin{equation*}
       \|u\|_{L^{p^{*}_{s, \alpha}}(\Omega_{1})}
        \leq C \|u\|_{W^{s,p, \alpha}(\Omega_{1})},
  \end{equation*}
  where $C=C(N,p,s, \alpha_{1}, \alpha_{2},n)>0$. Applying the above inequality with $u-(u)_{\Omega_{1}}$ and using Lemma \ref{Lemma: Weighted frac Poincare} with $\Omega=\Omega_{1}$ and $S_{1}=1$ and $S_{2}=n$, we obtain
  \begin{equation*}
    \left(  \int_{\Omega_{1}} |u(x)-(u)_{\Omega_{1}}|^{p^{*}_{s, \alpha}} \, dx \right)^{\frac{1}{p^{*}_{s, \alpha}}} \leq C \left( \int_{\Omega}\int_{\Omega}
\frac{|u(x)-u(y)|^{p}}
{|x-y|^{N+sp}}
|x|^{\alpha_{1}p}|y|^{\alpha_{2}p}\,dy\,dx\right)^{\frac{1}{p}}. 
  \end{equation*}
Dividing both sides by $|\Omega_1|^{1/p^*_{s,\alpha}}$ and absorbing
the fixed factor into the constant, we obtain
 \begin{equation*}
    \left( \frac{1}{|\Omega_{1}|}  \int_{\Omega_{1}} |u(x)-(u)_{\Omega_{1}}|^{p^{*}_{s, \alpha}} \, dx \right)^{\frac{1}{p^{*}_{s, \alpha}}} \leq C \left( \int_{\Omega}\int_{\Omega}
\frac{|u(x)-u(y)|^{p}}
{|x-y|^{N+sp}}
|x|^{\alpha_{1}p}|y|^{\alpha_{2}p}\,dy\,dx\right)^{\frac{1}{p}}. 
  \end{equation*}
Now, let $\lambda>0$ and apply the above inequality to
$v(x)=u(\lambda x)$. Since
\begin{equation*}
    \frac{1}{|\Omega_1|}
\int_{\Omega_1}u(\lambda y)\,dy
=
\frac{1}{|\Omega_\lambda|}
\int_{\Omega_\lambda}u(y)\,dy
=
(u)_{\Omega_\lambda},
\end{equation*}
we obtain
\begin{equation*}
    \left( \frac{1}{|\Omega_{1}|}  \int_{\Omega_{1}} |u(\lambda x)-(u)_{\Omega_{\lambda}}|^{p^{*}_{s, \alpha}} \, dx \right)^{\frac{1}{p^{*}_{s, \alpha}}} \leq C \left( \int_{\Omega}\int_{\Omega}
\frac{|u(\lambda x)-u(\lambda y)|^{p}}
{|x-y|^{N+sp}}
|x|^{\alpha_{1}p}|y|^{\alpha_{2}p}\,dy\,dx\right)^{\frac{1}{p}}. 
  \end{equation*}
Using the changes of variables $X=\lambda x$ and $Y=\lambda y$ on both sides, we have
  \begin{align*}
 & \left( \frac{1}{|\Omega_{\lambda}|} \int_{\Omega_{\lambda}}  |u(x)-(u)_{\Omega_\lambda}|^{p^*_{s,\alpha}} \, dx \right)^{\frac{1}{p^{*}_{s, \alpha}}} \\ & \hspace{2cm}
\leq
C\left(
\lambda^{sp-\alpha p-N}
\int_{\Omega_\lambda}\int_{\Omega_\lambda}
\frac{|u(x)-u(y)|^p}
{|x-y|^{N+sp}}
|x|^{\alpha_{1} p}|y|^{\alpha_{2} p}
\,dx\,dy
\right)^{\frac1p}.  
\end{align*}
This completes
the proof.
\end{proof}

\smallskip

The following lemma provides a basic estimate for the weighted Gagliardo seminorm that will be useful in the proof of our main results. Its proof can be found in \cite[Lemma $3.2$]{BanerjeeGangulySahu}.

\begin{lemma}\label{Lemma: Positive and Negative part}
Let $u:\mathbb{R}^{N} \to \mathbb{R}$ be a function and define
\begin{equation*}
    u_{+}(x)=\max\{u(x),0\}, \qquad u_{-}(x)=\max\{-u(x),0\}.
\end{equation*}
Then for $0<s<1$, $1 \leq p < \infty$ and any function $\mathcal{K}$ on $\mathbb{R}^{N} \times \mathbb{R}^{N}$, we have
\begin{align*}
    \int_{\mathbb{R}^{N}} \int_{\mathbb{R}^{N}}  \frac{|u_{+}(x) - u_{+}(y)|^{p}}{|x-y|^{N+sp}} & \mathcal{K}(x,y) \, dx \, dy  +  \int_{\mathbb{R}^{N}} \int_{\mathbb{R}^{N}} \frac{|u_{-}(x) - u_{-}(y)|^{p}}{|x-y|^{N+sp}} \mathcal{K}(x,y) \, dx \, dy \\ & \leq \int_{\mathbb{R}^{N}} \int_{\mathbb{R}^{N}} \frac{|u(x) - u(y)|^{p}}{|x-y|^{N+sp}} \mathcal{K}(x,y) \, dx \, dy.
\end{align*}
\end{lemma}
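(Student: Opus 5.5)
The plan is a pointwise comparison of integrands, using only the elementary fact that for real numbers $a,b$ we have
\begin{equation*}
|a_{+}-b_{+}|^{p}+|a_{-}-b_{-}|^{p}\leq |a-b|^{p},\qquad 1\leq p<\infty .
\end{equation*}
We then multiply by the factor $|x-y|^{-N-sp}\mathcal{K}(x,y)$ and integrate over $\mathbb{R}^{N}\times\mathbb{R}^{N}$. For this step we implicitly assume $\mathcal{K}\geq0$, as is the case in all our applications, where $\mathcal{K}(x,y)=|x|^{\alpha_1p}|y|^{\alpha_2p}$. Monotonicity of the integral then gives the claim immediately, with no measurability issues beyond those already implicit in the statement.

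To prove the elementary inequality, we fix $x,y$ and set $a=u(x)$, $b=u(y)$. We split into cases according to signs. If $a,b\geq0$, then $a_{-}=b_{-}=0$ and $a_{+}-b_{+}=a-b$, so equality holds. The case $a,b\leq0$ is symmetric. In the mixed case, say $a>0>b$, we have $a_{+}-b_{+}=a$ and $a_{-}-b_{-}=-|b|$. Since $|a-b|=a+|b|$, the inequality reduces to
\begin{equation*}
a^{p}+|b|^{p}\leq (a+|b|)^{p},
\end{equation*}
which is the superadditivity of $t\mapsto t^{p}$ on $[0,\infty)$ for $p\geq1$. This superadditivity follows, for instance, from convexity together with $0^{p}=0$, or from
\begin{equation*}
\frac{a}{a+|b|}+\frac{|b|}{a+|b|}=1 \quad\text{and}\quad \theta^{p}\leq\theta \text{ for } \theta\in[0,1].
\end{equation*}

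There is no real obstacle here. The only point to flag is the sign requirement on $\mathcal{K}$, which is needed to integrate the pointwise inequality and holds for the weights used later. The parameter $s$ plays no role, since it only enters through the positive factor $|x-y|^{-N-sp}$.
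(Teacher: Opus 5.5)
Your proof is correct. The paper gives no argument of its own here; it simply refers to Lemma 3.2 of Banerjee--Ganguly--Sahu, and your route is exactly the standard proof of that fact: the pointwise inequality $|a_+-b_+|^p+|a_--b_-|^p\le|a-b|^p$ (equality when $a,b$ have the same sign, superadditivity of $t\mapsto t^p$ otherwise), integrated against the kernel.

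You are also right to insist on $\mathcal{K}\ge 0$. The phrase ``any function'' cannot be taken literally, since for $p>1$ a negative kernel supported where $u$ changes sign reverses the inequality. Nonnegativity does hold for the kernels to which the lemma is actually applied later. These are the remainder weights in $\mathcal{R}(u)$ and $\mathcal{R}_1(u)$, rather than $|x|^{\alpha_1p}|y|^{\alpha_2p}$ itself as you suggest.
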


\section{Proof of quantitative stability for weighted fractional Hardy inequality}\label{Section : 7}

In this section, we prove Theorem \ref{Theorem: Quantitative frac hardy p geq 2} and Theorem \ref{Theorem: Quantitative frac hardy 1 < p < 2}
and establish quantitative stability for the weighted fractional Hardy
inequality. We first consider the case $p\geq2$ and subsequently treat the
range $1<p<2$. The two regimes require different arguments, owing to the
distinct structure of the nonlinear remainder estimates available in each
case.

\subsection{Proof of Theorem \ref{Theorem: Quantitative frac hardy p geq 2}} For $p\geq2$, Dyda and Kijaczko \cite[Theorem $1.6$]{dyda2024} established a remainder estimate for the
weighted fractional Hardy inequality under the assumption
\begin{equation*}
 \alpha=\alpha_1+\alpha_2,
\qquad
\alpha_1p, \, \alpha_2p, \, \alpha p\in(-N,sp),
\end{equation*}
and $sp-\alpha p <N$. More precisely, they proved that
\begin{align}\label{Weighted Frac Hardy for p geq 2 with remainder}
& \int_{\mathbb{R}^{N}}\int_{\mathbb{R}^{N}} \frac{|u(x)-u(y)|^{p}}{|x-y|^{N+sp}} |x|^{\alpha_{1} p} |y|^{\alpha_{2} p} \, dx \, dy  - \mathcal{C}  \int_{\mathbb{R}^{N}} \frac{|u(x)|^{p}}{|x|^{sp-\alpha p}} \, dx \nonumber \\  & \quad \quad \geq c_{p}  \int_{\mathbb{R}^{N}}\int_{\mathbb{R}^{N}} \frac{|v(x)-v(y)|^{p}}{|x-y|^{N+sp}} |x|^{- \frac{(N-\alpha_{1} p + \alpha_{2} p-sp)}{2}} |y|^{- \frac{(N+\alpha_{1} p - \alpha_{2} p-sp)}{2}} \, dx \, dy, \quad \forall \, u \in C^{1}_{c}(\mathbb{R}^{N}),
\end{align}
where $v(x) = u(x) \omega^{-1}_{s, \alpha}(x) = u(x) |x|^{\frac{N-sp+\alpha p}{p}}$ and $0<c_{p}\leq 1$ is given by
\begin{equation*}
    c_{p}= \min_{0< \tau < \frac{1}{2}} \left( (1-\tau)^{p} -\tau^{p}+ p \tau^{p-1} \right).
\end{equation*}

Throughout the proof, we assume that $\alpha_{1}p,\,\alpha_{2}p \in (-N,sp)$, $\alpha = \alpha_{1}+ \alpha_{2} \geq 0$ and $0<sp-\alpha p<N$. These assumptions are justified by the fact that the weighted fractional Hardy inequality with a remainder \eqref{Weighted Frac Hardy for p geq 2 with remainder} term holds whenever $\alpha_{1}p,\,\alpha_{2}p,\,\alpha p \in (-N,sp)$, and $sp-\alpha p <N$.

\smallskip

We first consider the case of nonnegative functions. Let
$u\in C^{1}_c(\mathbb{R}^N)$ with $u\geq0$. By homogeneity, we may assume that
\begin{equation*}
    \| u\|_{L^{p^{*}_{s, \alpha},p}(\mathbb{R}^{N})} = 1.
\end{equation*}
Recall the remainder term
\begin{equation*}
    \mathcal{R}(u):= \int_{\mathbb{R}^{N}}\int_{\mathbb{R}^{N}} \frac{|v(x)-v(y)|^{p}}{|x-y|^{N+sp}} |x|^{- \frac{(N-\alpha_{1} p + \alpha_{2} p-sp)}{2}} |y|^{- \frac{(N+\alpha_{1} p - \alpha_{2} p-sp)}{2}} \, dx \, dy.
\end{equation*}
Then, using the definition of weighted fractional Hardy deficit $\delta_{s,p,\alpha}(u)$ given in \eqref{Weighted Frac Hardy deficit} and the weighted fractional Hardy inequality with a remainder for $p \geq 2$ stated above, we have
\begin{equation}\label{ineq113}
\delta_{s,p,\alpha}(u) \geq \, c_{p} \mathcal{R}(u).
\end{equation}
Therefore, it is sufficient to estimate the remainder term $\mathcal{R}(u)$.

We first deal with the case $\mathcal{R}(u)>1$. Since  $L^{p^{*}_{s, \alpha},p}(\mathbb{R}^{N}) \hookrightarrow L^{p^{*}_{s, \alpha},\infty}(\mathbb{R}^{N})$ (see Proposition \ref{Proposition on Lorentz space}), the normalization above yields
\begin{equation*}
   \mathcal{R}(u)^{\frac{1}{2p}} >1 =  \| u \|_{L^{p^{*}_{s, \alpha},p}(\mathbb{R}^{N})} \geq C \| u \|_{L^{p^{*}_{s, \alpha},\infty} (\mathbb{R}^{N})} \geq C \inf_{a \geq 0}  \| u - a \, \omega_{s, \alpha} \|_{L^{p^{*}_{s, \alpha},\infty} (\mathbb{R}^{N})},
\end{equation*}
where $\omega_{s, \alpha}(x) =  \, |x|^{-\frac{N-sp+\alpha p}{p}}$ and $C= C(N,p,s, \alpha_{1}, \alpha_{2})>0$. Consequently,
\begin{equation}\label{ineqn100}
  \inf_{a \geq 0}  \| u - a \, \omega_{s, \alpha} \|^{2p}_{L^{p^{*}_{s, \alpha},\infty} (\mathbb{R}^{N})} \leq C\,  \mathcal{R}(u).
\end{equation}
It remains to consider the case $\mathcal{R}(u) \leq 1$. Set
\begin{equation*}
    \mathcal{A} = \{ x \in \mathbb{R}^{N} : v(x) > \mathcal{R}(u)^{\frac{p^{*}_{s, \alpha}}{2 p^{2}}} \}.
\end{equation*}
Since
$u\in C_c(\mathbb{R}^{N})$,
the set $\mathcal{A}$ is open and bounded. 

For each $\ell \in \mathbb{Z}$, consider the dyadic annuli defined in \eqref{Defn: A_{ell}}. Moreover, $\lim_{|x| \to 0} v(x) = \lim_{|x| \to 0} u(x)|x|^{\frac{N-sp+ \alpha p}{p}} = 0$. Hence, there exists a smallest $\ell_0\in\mathbb{Z}$ such that
\begin{equation*}
    v(x)\leq
    \mathcal{R}(u)^{\frac{p^{*}_{s,\alpha}}{2p^{2}}}
    \qquad\text{for every }x\in\mathcal{A}_{\ell_0},
\end{equation*}
while $\mathcal{A}\cap\mathcal{A}_{\ell_0+1}\neq\phi$. In particular, $ (v)_{\mathcal{A}_{\ell_0}} \leq \mathcal{R} (u)^{\frac{p^{*}_{s,\alpha}}{2p^{2}}}$, where $(v)_{\mathcal{A}_{\ell_0}}$ denotes the average of $v$ over $\mathcal{A}_{\ell_0}n $. Thus, for every $x\in\mathcal{A}$,
\begin{equation*}
    0<
    v(x)-\mathcal{R}(u)^{\frac{p^{*}_{s,\alpha}}{2p^{2}}}
    \leq
    v(x)-(v)_{\mathcal{A}_{\ell_0}}. 
\end{equation*}
It follows that
\begin{align}\label{Ineq01}
    \int_{\mathcal{A}} |v(x) - \mathcal{R}(u)^{\frac{p^{*}_{s, \alpha}}{2 p^{2}}}|^{p^{*}_{s, \alpha}} \, \frac{dx}{|x|^{N}} & \leq \int_{\mathcal{A}} |v(x) - (v)_{\mathcal{A}_{\ell_{0}}}|^{p^{*}_{s, \alpha}} \, \frac{dx}{|x|^{N}} \nonumber \\ 
    & = \sum_{\substack{\ell \in \mathbb{Z} \\ \mathcal{A} \cap \mathcal{A}_{\ell} \neq \phi}} \int_{\mathcal{A} \cap \mathcal{A}_{\ell}} |v(x) - (v)_{\mathcal{A}_{\ell_{0}}}|^{p^{*}_{s, \alpha}} \, \frac{dx}{|x|^{N}}. 
\end{align}
For each $\ell$ satisfying
$\mathcal{A}\cap\mathcal{A}_{\ell}\neq\phi$, the elementary inequality
$|a+b|^q\leq2^{q-1}(|a|^q+|b|^q)$ gives
\begin{align}\label{ineq986}
  \int_{\mathcal{A} \cap \mathcal{A}_{\ell}} |v(x) - (v)_{\mathcal{A}_{\ell_{0}}}|^{p^{*}_{s, \alpha}} \, \frac{dx}{|x|^{N}} & \leq 2^{p^{*}_{s, \alpha}-1} \int_{\mathcal{A} \cap \mathcal{A}_{\ell}} |v(x) - (v)_{A_{\ell}}|^{p^{*}_{s, \alpha}} \, \frac{dx}{|x|^{N}} \nonumber \\ & \quad + 2^{p^{*}_{s, \alpha}-1} |(v)_{\mathcal{A}_{\ell}} - (v)_{\mathcal{A}_{\ell_{0}}}|^{p^{*}_{s, \alpha}} \int_{\mathcal{A} \cap \mathcal{A}_{\ell}}  \, \frac{dx}{|x|^{N}} \nonumber \\ & \leq 2^{p^{*}_{s, \alpha}-1} \int_{\mathcal{A}_{\ell}} |v(x) - (v)_{\mathcal{A}_{\ell}}|^{p^{*}_{s, \alpha}} \, \frac{dx}{|x|^{N}} \nonumber \\ & \quad + 2^{p^{*}_{s, \alpha}-1} |(v)_{\mathcal{A}_{\ell}} - (v)_{\mathcal{A}_{\ell_{0}}}|^{p^{*}_{s, \alpha}} \int_{\mathcal{A} \cap \mathcal{A}_{\ell}}  \, \frac{dx}{|x|^{N}}  .
\end{align}
We estimate the first term on the right-hand side by means of
Lemma \ref{Lemma: Weighted frac Sobolev ineq Lambda}. Taking $n=2$ and
$\lambda=2^\ell$, we obtain
\begin{align*}
    \int_{\mathcal{A}_{\ell}} |v(x) - (v)_{\mathcal{A}_{\ell}}|^{p^{*}_{s, \alpha}} \, \frac{dx}{|x|^{N}} & \leq C \frac{1}{|\mathcal{A}_{\ell}|} \int_{\mathcal{A}_{\ell}} |v(x) - (v)_{\mathcal{A}_{\ell}}|^{p^{*}_{s, \alpha}} \, dx \\ & \leq C \left( 2^{\ell(sp-\alpha p - N)} \int_{\mathcal{A}_{\ell}} \int_{\mathcal{A}_{\ell}} \frac{|v(x)-v(y)|^{p}}{|x-y|^{N+sp}} |x|^{\alpha_{1}p} |y|^{\alpha_{2} p} \, dx \, dy  \right)^{\frac{p^{*}_{s, \alpha}}{p}}.
\end{align*}
The constant $C=C(N,s,p,\alpha_1,\alpha_2)$ is independent of $\ell$. For all $x,y \in \mathcal{A}_\ell$, we have $|x| \leq 2^{\ell+1}$, $|y| \leq 2^{\ell+1}$, and hence
\begin{equation*}
    |x|^{\frac{N-sp+ \alpha p}{2}} |y|^{\frac{N-sp+ \alpha p}{2}} \leq 2^{(\ell+1)(N-sp+ \alpha p)}.
\end{equation*}
Therefore, 
\begin{align*}
     \int_{\mathcal{A}_{\ell}} & |v(x) - (v)_{\mathcal{A}_{\ell}}|^{p^{*}_{s, \alpha}} \, \frac{dx}{|x|^{N}} \\ & \leq C \left( \int_{\mathcal{A}_{\ell}} \int_{\mathcal{A}_{\ell}} \frac{|v(x)-v(y)|^{p}}{|x-y|^{N+sp}} \, |x|^{- \frac{(N-\alpha_{1} p + \alpha_{2} p-sp)}{2}} |y|^{- \frac{(N+\alpha_{1} p - \alpha_{2} p-sp)}{2}} \, dx \, dy \right)^{\frac{p^{*}_{s, \alpha}}{p}} ,
\end{align*}
where $C= C(N,p,s, \alpha_{1}, \alpha_{2})>0$. Consequently, \eqref{ineq986} yields
\begin{align}\label{ineq04}
    \int_{\mathcal{A} \cap \mathcal{A}_{\ell}} & |v(x) - (v)_{\mathcal{A}_{\ell_{0}}}|^{p^{*}_{s, \alpha}} \, \frac{dx}{|x|^{N}} \nonumber \\ & \leq C \left( \int_{\mathcal{A}_{\ell}} \int_{\mathcal{A}_{\ell}} \frac{|v(x)-v(y)|^{p}}{|x-y|^{N+sp}} \, |x|^{- \frac{(N-\alpha_{1} p + \alpha_{2} p-sp)}{2}} |y|^{- \frac{(N+\alpha_{1} p - \alpha_{2} p-sp)}{2}} \, dx \, dy \right)^{\frac{p^{*}_{s, \alpha}}{p}} \nonumber \\ & \quad + 2^{p^{*}_{s, \alpha}-1}|(v)_{\mathcal{A}_{\ell}} - (v)_{\mathcal{A}_{\ell_{0}}}|^{p^{*}_{s, \alpha}}  \int_{\mathcal{A} \cap \mathcal{A}_{\ell}}  \, \frac{dx}{|x|^{N}}  .
\end{align}

We next control the difference of the averages appearing in
\eqref{ineq04}. Applying Lemma \ref{Lemma: on two disjoint set} with
$E=\mathcal{A}_k$ and $F=\mathcal{A}_{k+1}$, for
$\ell_0\leq k\leq\ell-1$, gives
\begin{align*}
  |(v)_{\mathcal{A}_{\ell}} - (v)_{\mathcal{A}_{\ell_{0}}}|^{p^{*}_{s, \alpha}} & \leq 2^{p^{*}_{s, \alpha}-1} \sum_{k = \ell_{0}}^{\ell-1}  |(v)_{\mathcal{A}_{k}} - (v)_{\mathcal{A}_{k+1}}|^{p^{*}_{s, \alpha}} \\ & \leq C \sum_{k = \ell_{0}}^{\ell-1} \fint_{\mathcal{A}_{k} \cup \mathcal{A}_{k+1}}  |v(x) - (v)_{\mathcal{A}_{k} \cup \mathcal{A}_{k+1}}|^{p^{*}_{s, \alpha}} \, dx, 
\end{align*}
where $C=C(N,s,p, \alpha_{1}, \alpha_{2})>0$. We apply Lemma \ref{Lemma: Weighted frac Sobolev ineq Lambda} with $n=4$ and $\lambda=2^k$. For
$x,y\in \mathcal{A}_{k}\cup \mathcal{A}_{k+1}$, we have 
\begin{equation*}
    |x|^{\frac{N-sp+ \alpha p}{2}}|y|^{\frac{N-sp+ \alpha p}{2}} \leq 2^{(k+2)(N-sp+ \alpha p)}.
\end{equation*}
It follows that
\begin{align*}
  &  |(v)_{\mathcal{A}_{\ell}} - (v)_{\mathcal{A}_{\ell_{0}}}|^{p^{*}_{s, \alpha}} \\ &  \leq C \sum_{k = \ell_{0}}^{\ell-1} \left( 2^{k (sp-\alpha p-N)} \int_{\mathcal{A}_{k} \cup \mathcal{A}_{k+1}} \int_{\mathcal{A}_{k} \cup \mathcal{A}_{k+1}} \frac{|v(x)-v(y)|^{p}}{|x-y|^{N+sp}} |x|^{\alpha_{1}p} |y|^{\alpha_{2} p} \, dx \, dy \right)^{\frac{p^{*}_{s, \alpha}}{p}} \\ & \leq C \sum_{k = \ell_{0}}^{\ell-1} \left( \int_{\mathcal{A}_{k} \cup \mathcal{A}_{k+1}} \int_{\mathcal{A}_{k} \cup \mathcal{A}_{k+1}} \frac{|v(x)-v(y)|^{p}}{|x-y|^{N+sp}} \, |x|^{- \frac{(N-\alpha_{1} p + \alpha_{2} p-sp)}{2}} |y|^{- \frac{(N+\alpha_{1} p - \alpha_{2} p-sp)}{2}} \, dx \, dy \right)^{\frac{p^{*}_{s, \alpha}}{p}} \\ &  \leq C \left( \int_{\mathbb{R}^{N}} \int_{\mathbb{R}^{N}} \frac{|v(x)-v(y)|^{p}}{|x-y|^{N+sp}} \, |x|^{- \frac{(N-\alpha_{1} p + \alpha_{2} p-sp)}{2}} |y|^{- \frac{(N+\alpha_{1} p - \alpha_{2} p-sp)}{2}} \, dx \, dy \right)^{\frac{p^{*}_{s, \alpha}}{p}} \\ & = C  \, \mathcal{R}(u)^{\frac{p^{*}_{s, \alpha}}{p}},
\end{align*}
where $C = C(N,s,p, \alpha_{1}, \alpha_{2})>0$. On the other hand, if
$x\in\mathcal{A}\cap\mathcal{A}_{\ell}$, then $v(x) > \mathcal{R}(u)^\frac{p^{*}_{s, \alpha}}{2 p^{2}}$. Hence,
\begin{equation*}
    \int_{\mathcal{A} \cap \mathcal{A}_{\ell}}  \, \frac{dx}{|x|^{N}} \leq \frac{1}{\mathcal{R}(u)^{\frac{p^{*}_{s, \alpha}}{2 p}}} \int_{\mathcal{A} \cap \mathcal{A}_{\ell}} \frac{|u(x)|^{p}}{|x|^{sp- \alpha p}} \, dx.
\end{equation*}
Combining the preceding two estimates, we obtain
\begin{align*}
    |(v)_{\mathcal{A}_{\ell}}  -  (v)_{\mathcal{A}_{\ell_{0}}}|^{p^{*}_{s, \alpha}}  \int_{\mathcal{A} \cap \mathcal{A}_{\ell}}  \, \frac{dx}{|x|^{N}} & \leq C \, \mathcal{R}(u)^{\frac{p^{*}_{s, \alpha}}{p}}   \frac{1}{\mathcal{R}(u)^{\frac{p^{*}_{s, \alpha}}{2 p}}} \int_{\mathcal{A} \cap \mathcal{A}_{\ell}} \frac{|u(x)|^{p}}{|x|^{sp-\alpha p}} \, dx \\ & \leq C \, \mathcal{R}(u)^{\frac{p^{*}_{s, \alpha}}{2 p}} \int_{\mathcal{A} \cap \mathcal{A}_{\ell}} \frac{|u(x)|^{p}}{|x|^{sp-\alpha p}} \, dx.
\end{align*}
Substitution into \eqref{ineq04} gives
\begin{align*}
\int_{\mathcal{A} \cap \mathcal{A}_{\ell}} & |v(x) - (v)_{\mathcal{A}_{\ell_{0}}}|^{p^{*}_{s, \alpha}} \, \frac{dx}{|x|^{N}} \\ & \leq C \left( \int_{\mathcal{A}_{\ell}} \int_{\mathcal{A}_{\ell}} \frac{|v(x)-v(y)|^{p}}{|x-y|^{N+sp}} \, |x|^{- \frac{(N-\alpha_{1} p + \alpha_{2} p-sp)}{2}} |y|^{- \frac{(N+\alpha_{1} p - \alpha_{2} p-sp)}{2}} \, dx \, dy \right)^{\frac{p^{*}_{s, \alpha}}{p}} \nonumber \\ & \quad +  C \, \mathcal{R}(u)^{\frac{p^{*}_{s, \alpha}}{2 p} } \int_{\mathcal{A} \cap \mathcal{A}_{\ell}} \frac{|u(x)|^{p}}{|x|^{sp-\alpha p}} \, dx.
\end{align*}
Summing over all $\ell$ such that
$\mathcal{A}\cap\mathcal{A}_{\ell}\neq\phi$, and using
\eqref{Ineq01}, the definition of $\mathcal{R}(u)$, and
Lemma \ref{Lemma: Hardy potential and Lorentz}, we arrive at
\begin{align*}
    \int_{\mathcal{A}} |v(x) - \mathcal{R}(u)^{\frac{p^{*}_{s, \alpha}}{2 p^{2}}}|^{p^{*}_{s, \alpha}} \, \frac{dx}{|x|^{N}} & \leq \sum_{\substack{\ell \in \mathbb{Z} \\ \mathcal{A} \cap \mathcal{A}_{\ell} \neq \phi}} \int_{\mathcal{A} \cap \mathcal{A}_{\ell}} |v(x) - (v)_{\mathcal{A}_{\ell_{0}}}|^{p^{*}_{s, \alpha}} \, \frac{dx}{|x|^{N}} \\ & \leq C \, \mathcal{R}(u)^{\frac{p^{*}_{s, \alpha}}{p}} + C \, \mathcal{R}(u)^{\frac{p^{*}_{s, \alpha}}{2 p} } \int_{\mathbb{R}^{N}} \frac{|u(x)|^{p}}{|x|^{sp- \alpha p}} \, dx \\ & \leq C \left(\mathcal{R}(u)^{\frac{p^{*}_{s, \alpha}}{p}} +  \mathcal{R}(u)^{ \frac{p^{*}_{s, \alpha}}{2 p}} \right),
\end{align*}
where the last inequality follows from
$\|u\|_{L^{p^{*}_{s,\alpha},p}(\mathbb{R}^N)}=1$.
Since $\mathcal{R}(u)\leq1$,
\begin{equation*}
    \mathcal{R}(u)^{\frac{p^{*}_{s,\alpha}}{p}}
    \leq
    \mathcal{R}(u)^{\frac{p^{*}_{s,\alpha}}{2p}},
\end{equation*}
and consequently
\begin{align*}
    \int_{\mathcal{A}} |v(x) - \mathcal{R}(u)^{\frac{p^{*}_{s, \alpha}}{2 p^{2}}}|^{p^{*}_{s, \alpha}} \, \frac{dx}{|x|^{N}} & \leq C \left( \mathcal{R}(u)^{\frac{p^{*}_{s, \alpha}}{p}} + \mathcal{R}(u)^{\frac{p^{*}_{s, \alpha}}{2p}} \right) \\ & \leq C \left( \mathcal{R}(u)^{\frac{p^{*}_{s, \alpha}}{2p}} + \mathcal{R}(u)^{\frac{p^{*}_{s, \alpha}}{2p} }  \right) \leq C \, \mathcal{R}(u)^{\frac{p^{*}_{s, \alpha}}{2p}} .
\end{align*}
Since $\omega_{s, \alpha}(x) = |x|^{- \frac{N-sp+ \alpha p}{p}}$, the Lorentz embedding gives (see Proposition \ref{Proposition on Lorentz space} and note that $L^{p^{*}_{s, \alpha}, p^{*}_{s, \alpha}} = L^{p^{*}_{s, \alpha}}$) 
\begin{align*}
    \| u - \mathcal{R}(u)^{\frac{p^{*}_{s, \alpha}}{2 p^{2}}} \omega_{s, \alpha} \|_{L^{p^{*}_{s, \alpha}, \infty}(\mathcal{A})} & \leq C \left( \int_{\mathcal{A}} | u(x) - \mathcal{R}(u)^{\frac{p^{*}_{s, \alpha}}{2 p^{2}}} \omega_{s, \alpha}(x) |^{p^{*}_{s, \alpha}} \, dx \right)^{\frac{1}{p^{*}_{s, \alpha}}} \\ &  = C \left( \int_{\mathcal{A}} |v(x) - \mathcal{R}(u)^{\frac{p^{*}_{s, \alpha}}{2 p^{2}}}|^{p^{*}_{s, \alpha}} \, \frac{dx}{|x|^{N}} \right)^{\frac{1}{p^{*}_{s, \alpha}}} \\ &  \leq C \, \mathcal{R}(u)^{\frac{1}{2p}},
\end{align*}
where the constant $C= C(N, s, p, \alpha_{1}, \alpha_{2})$ is independent of the set $\mathcal{A}$. Therefore,
\begin{equation*}
    \| u - \mathcal{R}(u)^{\frac{p^{*}_{s, \alpha}}{2 p^{2}}} \omega_{s, \alpha} \|^{2p}_{L^{p^{*}_{s, \alpha}, \infty}(\mathcal{A})} \leq C \, \mathcal{R}(u) .
\end{equation*}

It remains to estimate the same quantity on $\mathcal{A}^c$.
For $x\in\mathcal{A}^c$, $v(x)\leq
    \mathcal{R}(u)^{\frac{p^{*}_{s,\alpha}}{2p^2}}$, and hence 
\begin{equation*}
 \omega_{s, \alpha}(x)^{-1} |u(x)- \mathcal{R}(u)^{\frac{p^{*}_{s, \alpha}}{2 p^{2}}} \omega_{s, \alpha}(x)| =  |v(x) - \mathcal{R}(u)^{\frac{p^{*}_{s, \alpha}}{2 p^{2}}}| \leq 2 \mathcal{R}(u)^{\frac{p^{*}_{s, \alpha}}{2 p^{2}}}, \quad \forall \ x \in \mathcal{A}^{c}.
\end{equation*}
Thus,
\begin{equation*}
     |u(x)- \mathcal{R}(u)^{\frac{p^{*}_{s, \alpha}}{2 p^{2}}} \omega_{s, \alpha}(x)| \leq 2 \mathcal{R}(u)^{\frac{p^{*}_{s, \alpha}}{2 p^{2}}} \omega_{s, \alpha}(x), \quad \forall \ x \in \mathcal{A}^{c} .
\end{equation*}
Since
$\omega_{s,\alpha}\in
L^{p^{*}_{s,\alpha},\infty}(\mathbb{R}^N)$,
we obtain
\begin{equation*}
    \|u- \mathcal{R}(u)^{\frac{p^{*}_{s, \alpha}}{2 p^{2}}} \omega_{s, \alpha}\|_{L^{p^{*}_{s, \alpha}, \infty}(\mathcal{A}^{c})} \leq C \, \mathcal{R}(u)^{\frac{p^{*}_{s, \alpha}}{2 p^{2}}} \| \omega_{s, \alpha} \|_{L^{p^{*}_{s, \alpha}, \infty}(\mathcal{A}^{c})} \leq C \, \mathcal{R}(u)^{\frac{p^{*}_{s, \alpha}}{2p^{2}}} \leq C \, \mathcal{R}(u)^{\frac{1}{2p}},
\end{equation*}
where we have used $\mathcal{R}(u)\leq1$. Consequently,
\begin{equation*}
     \|u- \mathcal{R}(u)^{\frac{p^{*}_{s, \alpha}}{2 p^{2}}} \omega_{s, \alpha}\|^{2p}_{L^{p^{*}_{s, \alpha}, \infty}(\mathcal{A}^{c})} \leq C \, \mathcal{R}(u).
\end{equation*}
Combining the estimates on $\mathcal{A}$ and $\mathcal{A}^c$, we
obtain
\begin{align*}
    \|u- \mathcal{R}(u)^{\frac{p^{*}_{s, \alpha}}{2 p^{2}}} \omega_{s, \alpha}\|_{L^{p^{*}_{s, \alpha}, \infty}(\mathbb{R}^{N})} & \leq C \Big( \|u- \mathcal{R}(u)^{\frac{p^{*}_{s, \alpha}}{2 p^{2}}} \omega_{s, \alpha}\|_{L^{p^{*}_{s, \alpha}, \infty}(\mathcal{A})} \\ & \quad  + \|u- \mathcal{R}(u)^{\frac{p^{*}_{s, \alpha}}{2 p^{2}}} \omega_{s, \alpha}\|_{L^{p^{*}_{s, \alpha}, \infty}(\mathcal{A}^{c})} \Big) \\ & \leq C \, \mathcal{R}(u)^{\frac{1}{2p}}. 
\end{align*}
Therefore, combining the cases $\mathcal{R}(u) > 1$ and $\mathcal{R}(u) \leq 1$, and under the normalization 
\begin{equation*}
    \| u \|_{L^{p^{*}_{s, \alpha},p}(\mathbb{R}^{N})} =1,
\end{equation*}
we obtain
\begin{equation*}
   \inf_{a \geq 0} \|u- a\,  \omega_{s, \alpha}\|^{2p}_{L^{p^{*}_{s, \alpha}, \infty}(\mathbb{R}^{N})} \leq \|u-  \mathcal{R}(u)^{\frac{p^{*}_{s, \alpha}}{2 p^{2}}} \omega_{s, \alpha}\|^{2p}_{L^{p^{*}_{s, \alpha}, \infty}(\mathbb{R}^{N})} \leq C \, \mathcal{R}(u). 
\end{equation*}
We next remove the normalization. Set
\begin{equation*}
    \widetilde{u} = \frac{u}{\| u \|_{L^{p^{*}_{s, \alpha},p}(\mathbb{R}^{N})} }.
\end{equation*} 
Applying the normalized estimate to $\widetilde{u}$ and using the
homogeneity of the involved quantities, we obtain
\begin{align*}
  \inf_{a \geq 0} \frac{\| u -  a \, \omega_{s, \alpha}\|^{2p}_{L^{p^{*}_{s, \alpha},\infty}(\mathbb{R}^{N})}}{ \| u \|^{2p}_{L^{p^{*}_{s, \alpha},p}(\mathbb{R}^{N})} }  \leq C \frac{\mathcal{R}(u)}{ \| u \|^{p}_{L^{p^{*}_{s, \alpha},p}(\mathbb{R}^{N})}  }.   
\end{align*}
Hence, for every nonnegative $u \in C^{1}_{c}(\mathbb{R}^{N})$,
\begin{equation}\label{ineq1234}
    \inf_{a \geq 0} \| u - a \, \omega_{s, \alpha}\|^{2p}_{L^{p^{*}_{s, \alpha},\infty}(\mathbb{R}^{N})} \leq C \, \mathcal{R}(u)\| u \|^{p}_{L^{p^{*}_{s, \alpha},p}(\mathbb{R}^{N})}  .
\end{equation}
We finally pass to arbitrary real-valued functions. Let
$u\in C^{1}_c(\mathbb{R}^N)$ and write $u_{+}(x) = \max \{ u(x),0 \} $ and $u_{-}(x) = \max \{ -u(x),0 \}$, the positive and the negative parts of $u$ respectively, so that $u=u_{+}- u_{-}$. For $b,c\geq0$, the triangle inequality gives
\begin{align*}
  \inf_{a \in \mathbb{R}}  \| u  - a\, \omega_{s, \alpha}\|_{L^{p^{*}_{s, \alpha},\infty}(\mathbb{R}^{N})} & = \inf_{b,c \geq 0 }  \| u_{+} - u_{-} - (b-c)\, \omega_{s,\alpha}\|_{L^{p^{*}_{s, \alpha},\infty}(\mathbb{R}^{N})}  \\ & \leq \inf_{b \geq 0}  \| u_{+} -b \,  \omega_{s, \alpha}\|_{L^{p^{*}_{s, \alpha},\infty}(\mathbb{R}^{N})}  + \inf_{c \geq 0}  \| u_{-} - c\, \omega_{s, \alpha}\|_{L^{p^{*}_{s, \alpha},\infty}(\mathbb{R}^{N})}.
\end{align*}
Applying \eqref{ineq1234} to $u_+$ and $u_-$, respectively, and
using Lemma \ref{Lemma: Positive and Negative part}, we find
\begin{align*}
    \sum_{\pm} \inf_{a \geq 0} \| u_{\pm} - a\, \omega_{s, \alpha} \|_{L^{p^{*}_{s, \alpha}, \infty}(\mathbb{R}^{N})} & \leq C \sum_{\pm}  \left( \mathcal{R}(u_{\pm}) \right)^{\frac{1}{2p}} \| u_{\pm} \|^{\frac{1}{2}}_{L^{p^{*}_{s, \alpha},p}(\mathbb{R}^{N})}   \\ & \leq C \left( \mathcal{R}(u_{+})+ \mathcal{R}(u_{-}) \right)^{\frac{1}{2p}}  \| u \|^{\frac{1}{2}}_{L^{p^{*}_{s, \alpha},p}(\mathbb{R}^{N})}  \\ & \leq C \left( \mathcal{R}(u)  \right)^{\frac{1}{2p}}  \| u \|^{\frac{1}{2}}_{L^{p^{*}_{s, \alpha},p}(\mathbb{R}^{N})} .
\end{align*}
Combining the preceding estimates with Lemma
\ref{Lemma: Hardy potential and Lorentz}, we conclude that
\begin{align*}
\left( \frac{N}{\mathbb{S}^{N-1}} \right)^{\frac{sp- \alpha p}{N}} & \inf_{a \in \mathbb{R}} \frac{ \| u -a\, \omega_{s, \alpha}\|^{2p}_{L^{p^{*}_{s, \alpha},\infty}(\mathbb{R}^{N})}}{\| u \|^{2p}_{L^{p^{*}_{s, \alpha},p}(\mathbb{R}^{N})}}  \int_{\mathbb{R}^{N}} \frac{|u(x)|^{p}}{|x|^{sp-\alpha p}} \, dx \\ & \leq  \inf_{a \in \mathbb{R}} \frac{ \| u - a\, \omega_{s, \alpha}\|^{2p}_{L^{p^{*}_{s, \alpha},\infty}(\mathbb{R}^{N})}}{\| u \|^{2p}_{L^{p^{*}_{s, \alpha},p}(\mathbb{R}^{N})}}  \| u \|^{p}_{L^{p^{*}_{s, \alpha},p}(\mathbb{R}^{N})}  \leq C \, \mathcal{R}(u).
\end{align*}
Finally, by the definition of $\mathcal{D}_{s,p, \alpha}$ and $\mathcal{R}(u)$, together with the fractional Hardy inequality with remainder for $p\geq2$ given in
\eqref{Weighted Frac Hardy for p geq 2 with remainder}, we obtain

\begin{align*}
\delta_{s,p, \alpha}(u) \geq C 
    \left(\int_{\mathbb{R}^{N}} \frac{|u(x)|^{p}}{|x|^{sp-\alpha p}} \, dx \right) \mathcal{D}_{s,p, \alpha}(u)^{2p},
\end{align*}
for all $u \in C^{1}_c(\mathbb{R}^N)$.

\subsection{Proof of Theorem \ref{Theorem: Quantitative frac hardy 1 < p < 2}} A weighted fractional Hardy inequality with a remainder term in the case
$1<p<2$ was established by Dyda and Kijaczko in \cite[Theorem $2$]{Dyda2024JFA}.
More precisely, they proved that, for $1<p<2$, $s\in(0,1)$, and
$\alpha=\alpha_{1}+\alpha_{2}$, where $\alpha_{1}p,\, \alpha_{2}p,\, \alpha p\in(-N,sp)$, the following inequality holds for all $u\in C_c^1(\mathbb{R}^N)$ when $sp-\alpha p<N$,
and for all
$u\in C_c^1(\mathbb{R}^N\setminus\{0\})$ when $sp-\alpha p>N$,
\begin{align*}
    & \int_{\mathbb{R}^{N}}\int_{\mathbb{R}^{N}} \frac{|u(x)-u(y)|^{p}}{|x-y|^{N+sp}} |x|^{\alpha_{1} p} |y|^{\alpha_{2} p} \, dx \, dy  - \mathcal{C}  \int_{\mathbb{R}^{N}} \frac{|u(x)|^{p}}{|x|^{sp-\alpha p}} \, dx \nonumber \\  & \quad \quad \geq C_{p}  \int_{\mathbb{R}^{N}}\int_{\mathbb{R}^{N}} \frac{|v(x)^{\langle \frac{p}{2} \rangle}-v(y)^{\langle \frac{p}{2} \rangle}|^{2}}{|x-y|^{N+sp}} \mathcal{W}(x,y) \, |x|^{\alpha_{1}p} |y|^{\alpha_{2}p} \, dx \, dy, \quad v(x)^{\langle \frac{p}{2} \rangle} = |v(x)|^{\frac{p}{2}} \operatorname{sgn} v(x),
\end{align*}
where $v(x)=u(x)|x|^{\frac{N-sp+\alpha p}{p}}$ if $u$ is real-valued, while $v(x) =|u(x)|\,|x|^{\frac{N-sp+\alpha p}{p} } $ if $u$ is complex-valued. Moreover,
\begin{equation*}
   \mathcal{W}(x,y)
    =
    \min\left\{
        |x|^{-\frac{N-sp+\alpha p}{p}}, \, 
        |y|^{-\frac{N-sp+\alpha p}{p}}
    \right\}
    \max\left\{
        |x|^{-\frac{N-sp+\alpha p}{p}}, \, 
        |y|^{-\frac{N-sp+\alpha p}{p}}
    \right\}^{p-1},  
\end{equation*}
and
\begin{equation*}
   C_p
    =
    \max\left\{
        \frac{p-1}{p},
        \frac{p(p-1)}{2}
    \right\}.  
\end{equation*}
When $u\geq0$ or $u$ is complex-valued, the constant $C_p$ in the above inequality can be replaced by $p-1$.

\smallskip

Let $u \in C^{1}_{c}(\mathbb{R}^{N})$ with $u \geq 0$. By homogeneity, as in the proof of Theorem \ref{Theorem: Quantitative frac hardy p geq 2}, we may assume that
\begin{equation*}
    \| u \|_{L^{p^{*}_{s, \alpha},p}(\mathbb{R}^{N})} = 1.
\end{equation*}
Define the remainder term 
\begin{equation*}
    \mathcal{R}_{1}(u) : = \int_{\mathbb{R}^{N}}\int_{\mathbb{R}^{N}} \frac{|v(x)^{\langle \frac{p}{2} \rangle}-v(y)^{\langle \frac{p}{2} \rangle}|^{2}}{|x-y|^{N+sp}} \mathcal{W}(x,y) \, |x|^{\alpha_{1}p} |y|^{\alpha_{2}p} \, dx \, dy,
\end{equation*}
and let
\begin{equation*}
    \mathcal{W}^{p}_{r,s, \alpha}(x,y)
    =
    \min\left\{
        |x|^{-\frac{N-sp+\alpha p}{r}}, \, 
        |y|^{-\frac{N-sp+\alpha p}{r}}
    \right\}
    \max\left\{
        |x|^{-\frac{N-sp+\alpha p}{r}}, \,
        |y|^{-\frac{N-sp+\alpha p}{r}}
    \right\}^{r-1},  
\end{equation*}
where $r>1$. Note that $\mathcal{W}^{p}_{p,s, \alpha} = \mathcal{W}$. Similarly to the proof of Theorem~\ref{Theorem: Quantitative frac hardy p geq 2}, suppose that
$\mathcal{R}_{1}(u)>1$. By the continuous embedding $L^{p^{*}_{s, \alpha},p}(\mathbb{R}^{N}) \hookrightarrow L^{p^{*}_{s, \alpha},\infty}(\mathbb{R}^{N})$ (see Proposition \ref{Proposition on Lorentz space}), and using (see Remark \ref{Remark on Lorentz norm})
\begin{equation*}
   \| u \|_{L^{p^{*}_{s, \alpha},\infty}(\mathbb{R}^{N})} =  \left\| \left( u^{\frac{p}{2}} \right)^{\frac{2}{p}} \right\|_{L^{p^{*}_{s, \alpha}, \infty}(\mathbb{R}^{N})}= \left\|  u^{\frac{p}{2}}  \right\|^{\frac{2}{p}}_{L^{q_{s, \alpha}, \infty}(\mathbb{R}^{N})}, \quad \text{where} \quad  q_{s, \alpha}= \frac{2N}{N-sp+\alpha p},
\end{equation*} 
we obtain
\begin{align*}
   \mathcal{R}_{1}(u)^{\frac{1}{2p}} >1 & =  \| u \|_{L^{p^{*}_{s, \alpha},p}(\mathbb{R}^{N})}  \\ & \geq C \| u \|_{L^{p^{*}_{s, \alpha},\infty} (\mathbb{R}^{N})} = C \left\|  u^{\frac{p}{2}}  \right\|^{\frac{2}{p}}_{L^{q_{s, \alpha}, \infty}(\mathbb{R}^{N})} \geq C \inf_{a \geq 0}  \| u^{\frac{p}{2}} - a \, \omega^{\frac{p}{2}}_{s, \alpha} \|^{\frac{2}{p}}_{L^{q_{s, \alpha},\infty} (\mathbb{R}^{N})},
\end{align*} 
where $\omega_{s, \alpha}(x) =  \, |x|^{-\frac{N-sp+\alpha p}{p}}$ and $C= C(N,p,s, \alpha_{1}, \alpha_{2})>0$. Consequently,
\begin{equation}
  \inf_{a \geq 0}  \| u^{\frac{p}{2}} - a \, \omega^{\frac{p}{2}}_{s, \alpha} \|^{4}_{L^{q_{s, \alpha},\infty} (\mathbb{R}^{N})} \leq C \,  \mathcal{R}_{1}(u).
\end{equation}
Therefore, it remains to consider the case $\mathcal{R}_{1}(u) \leq 1$. Set 
\begin{equation*}
    \mathcal{B} := \left\{ x \in \mathbb{R}^{N} : \left(  v(x)\right)^{\frac{p}{2}} > \mathcal{R}_{1}(u)^{\frac{q_{s, \alpha}}{4p}} \right\}.
\end{equation*}
Since
$u\in C^{1}_c(\mathbb{R}^{N})$, the set $\mathcal{B}$ is open and bounded. 

For each $\ell \in \mathbb{Z}$, consider the dyadic annuli defined in \eqref{Defn: A_{ell}}. Moreover, $\lim_{|x| \to 0} v(x) = 0$. Hence, there exists a smallest $n_0\in\mathbb{Z}$ such that
\begin{equation*}
    ( v(x))^{\frac{p}{2}}\leq
    \mathcal{R}_{1}(u)^{\frac{q_{s, \alpha}}{4p}},
    \qquad\text{for every }x\in\mathcal{A}_{n_0},
\end{equation*}
while $\mathcal{B}\cap\mathcal{A}_{n_0+1}\neq\phi$. In particular, $ (v^{\frac{p}{2}})_{\mathcal{A}_{n_0}} \leq \mathcal{R}_{1} (u)^{\frac{q_{s, \alpha}}{4p}}$, where $(v^{\frac{p}{2}})_{\mathcal{A}_{n_0}}$ denotes the average of $v^{\frac{p}{2}}$ over $\mathcal{A}_{n_{0}}$. Thus, for every $x\in\mathcal{B}$,
\begin{equation*}
    0<
    (v(x))^{\frac{p}{2}}-\mathcal{R}_{1}(u)^{\frac{q_{s, \alpha}}{4p}}
    \leq
    (v(x))^{\frac{p}{2}}-(v^{\frac{p}{2}})_{\mathcal{A}_{n_0}}. 
\end{equation*}
It follows that
\begin{align}\label{ineq11}
    \int_{\mathcal{B}} |(v(x))^{\frac{p}{2}} - \mathcal{R}_{1}(u)^{\frac{q_{s, \alpha}}{4p}}|^{q_{s, \alpha}} \, \frac{dx}{|x|^{N}} & \leq \int_{\mathcal{B}} |(v(x))^{\frac{p}{2}} - (v^{\frac{p}{2}})_{\mathcal{A}_{n_{0}}}|^{q_{s, \alpha}} \, \frac{dx}{|x|^{N}} \nonumber \\ 
    & = \sum_{\substack{\ell \in \mathbb{Z} \\ \mathcal{B} \cap \mathcal{A}_{\ell} \neq \phi}} \int_{\mathcal{B} \cap \mathcal{A}_{\ell}} |(v(x))^{\frac{p}{2}} - (v^{\frac{p}{2}})_{\mathcal{A}_{n_{0}}}|^{q_{s, \alpha}} \, \frac{dx}{|x|^{N}},
\end{align}
where $q_{s, \alpha}= \frac{2N}{N-sp+ \alpha p}$. We now let
\begin{equation}\label{Defn:  s'}
  s' = \frac{sp}{2} \in (0,1), \quad \text{for} \quad 1<p<2 \quad \text{and} \quad s \in (0,1).
\end{equation}
For any $\ell \in \mathbb{Z}$ satisfying $\mathcal{B} \cap  A_{\ell} \neq \phi$, we have
\begin{align}\label{ineq12}
  \int_{\mathcal{B} \cap \mathcal{A}_{\ell}} |(v(x))^{\frac{p}{2}} - (v^{\frac{p}{2}})_{\mathcal{A}_{n_{0}}}|^{2^{*}_{s', \beta}} \, \frac{dx}{|x|^{N}} & \leq 2^{2^{*}_{s', \beta}-1} \int_{\mathcal{B} \cap \mathcal{A}_{\ell}} |(v(x))^{\frac{p}{2}} - (v^{\frac{p}{2}})_{A_{\ell}}|^{2^{*}_{s', \beta}} \, \frac{dx}{|x|^{N}} \nonumber \\ & \quad + 2^{2^{*}_{s', \beta}-1} |(v^{\frac{p}{2}})_{\mathcal{A}_{\ell}} - (v^{\frac{p}{2}})_{\mathcal{A}_{n_{0}}}|^{2^{*}_{s', \beta}} \int_{\mathcal{B} \cap \mathcal{A}_{\ell}}  \, \frac{dx}{|x|^{N}} \nonumber \\ & \leq 2^{2^{*}_{s', \beta}-1} \int_{\mathcal{A}_{\ell}} |(v(x))^{\frac{p}{2}} - (v^{\frac{p}{2}})_{\mathcal{A}_{\ell}}|^{2^{*}_{s', \beta}} \, \frac{dx}{|x|^{N}} \nonumber \\ & \quad + 2^{2^{*}_{s', \beta}-1} |(v^{\frac{p}{2}})_{\mathcal{A}_{\ell}} - (v^{\frac{p}{2}})_{\mathcal{A}_{n_{0}}}|^{2^{*}_{s', \beta}} \int_{\mathcal{B} \cap \mathcal{A}_{\ell}}  \, \frac{dx}{|x|^{N}}  ,
\end{align}
where $2^{*}_{s', \beta} = \frac{2N}{N-2s'+2 \beta}$ and $0<2s'- 2 \beta < N$ and $s' \in (0,1)$ defined in \eqref{Defn:  s'}.

By Lemma \ref{Lemma: Weighted frac Sobolev ineq Lambda} with $n=2$ and $\lambda = 2^{\ell}$, we obtain
\begin{align*}
   \int_{\mathcal{A}_{\ell}} & |(v(x))^{\frac{p}{2}} - (v^{\frac{p}{2}})_{\mathcal{A}_{\ell}}|^{2^{*}_{s', \beta}}  \, \frac{dx}{|x|^{N}}  \leq C \frac{1}{|\mathcal{A}_{\ell}|}\int_{\mathcal{A}_{\ell}} |(v(x))^{\frac{p}{2}} - (v^{\frac{p}{2}})_{\mathcal{A}_{\ell}}|^{2^{*}_{s', \beta}} \, dx \\ & \leq C \left( 2^{\ell(2s'-2 \beta-N)} \int_{\mathcal{A}_{\ell}} \int_{\mathcal{A}_{\ell}} \frac{|(v(x))^{\frac{p}{2}}-(v(y))^{\frac{p}{2}}|^{2}}{|x-y|^{N+2s'}} |x|^{2 \beta_{1}} |y|^{2 \beta_{2}} \, dx \, dy  \right)^{\frac{2^{*}_{s', \beta}}{2}} ,
\end{align*}
where $\beta =  \beta_{1} + \beta_{2} \geq 0$, $2 \beta_{1}, \, 2 \beta_{2} \in (-N, 2s')$ and $0<2s' -2 \beta < N$.
For all $x,y \in \mathcal{A}_{\ell}$, we have $|x| \leq 2^{\ell+1}$, $|y| \leq 2^{\ell+1}$, and hence
\begin{align*}
    2^{(\ell+1)(2s'-2\beta -N)} & \leq   \min \left\{ |x|^{-\frac{N-2s' + 2 \beta}{r}},  |y|^{-\frac{N-2s' + 2 \beta}{r}} \right\} \max \left\{ |x|^{-\frac{N-2s'+ 2 \beta}{r}},  |y|^{-\frac{N-2s'+ 2 \beta}{r}} \right\}^{r-1} \\ & = \mathcal{W}^{2}_{r,s', \beta}(x,y).
\end{align*}
Therefore, we arrive at
\begin{align*}
 \int_{\mathcal{A}_{\ell}} & |(v(x))^{\frac{p}{2}} - (v^{\frac{p}{2}})_{\mathcal{A}_{\ell}}|^{2^{*}_{s', \beta}}  \, \frac{dx}{|x|^{N}} \\ & \leq C \left( \int_{\mathcal{A}_{\ell}} \int_{\mathcal{A}_{\ell}} \frac{|(v(x))^{\frac{p}{2}}-(v(y))^{\frac{p}{2}}|^{2}}{|x-y|^{N+2s'}} \, \mathcal{W}^{2}_{r,s', \beta} (x,y) \, |x|^{2 \beta_{1}} |y|^{2 \beta_{2}} \, dx \, dy \right)^{\frac{2^{*}_{s', \beta}}{2}} ,
\end{align*}
where $C= C(N,s,p, \beta_{1}, \beta_{2})>0$. Substituting the above estimate into \eqref{ineq12}, we obtain
\begin{align*}
    \int_{\mathcal{B}  \cap \mathcal{A}_{\ell}} & |(v(x))^{\frac{p}{2}}  - (v^{\frac{p}{2}})_{\mathcal{A}_{n_{0}}}|^{2^{*}_{s', \beta}} \, \frac{dx}{|x|^{N}} \\ & \leq C \left( \int_{\mathcal{A}_{\ell}} \int_{\mathcal{A}_{\ell}} \frac{|(v(x))^{\frac{p}{2}}-(v(y))^{\frac{p}{2}}|^{2}}{|x-y|^{N+2s'}} \, \mathcal{W}^{2}_{r,s',\beta} (x,y) \, |x|^{2 \beta_{1}} |y|^{2 \beta_{2}} \, dx \, dy \right)^{\frac{2^{*}_{s', \beta}}{2}} \\ & \quad +  2^{2^{*}_{s', \beta}-1} |(v^{\frac{p}{2}})_{\mathcal{A}_{\ell}} - (v^{\frac{p}{2}})_{\mathcal{A}_{n_{0}}}|^{2^{*}_{s', \beta}} \int_{\mathcal{B} \cap \mathcal{A}_{\ell}}  \, \frac{dx}{|x|^{N}} . 
\end{align*}
In the above inequality, replacing $\beta_{1}$ and $\beta_{2}$ by
$\frac{\alpha_{1}p}{2}$ and $\frac{\alpha_{2}p}{2}$, respectively, and
using the definition of $s'$ given in \eqref{Defn: s'}, taking $r=p$, we obtain
\begin{align}\label{ineq123}
     \int_{\mathcal{B} \cap \mathcal{A}_{\ell}} |(v(x))^{\frac{p}{2}} & - (v^{\frac{p}{2}})_{\mathcal{A}_{n_{0}}}|^{q_{s, \alpha}} \, \frac{dx}{|x|^{N}} \nonumber \\ & \leq C \left( \int_{\mathcal{A}_{\ell}} \int_{\mathcal{A}_{\ell}} \frac{|(v(x))^{\frac{p}{2}}-(v(y))^{\frac{p}{2}}|^{2}}{|x-y|^{N+sp}} \, \mathcal{W} (x,y) |x|^{\alpha_{1}p} |y|^{\alpha_{2}p} \, dx \, dy \right)^{\frac{q_{s, \alpha}}{2}} \nonumber \\ & \quad +  C |(v^{\frac{p}{2}})_{\mathcal{A}_{\ell}} - (v^{\frac{p}{2}})_{A_{n_{0}}}|^{q_{s, \alpha}} \int_{\mathcal{B} \cap \mathcal{A}_{\ell}}  \, \frac{dx}{|x|^{N}} ,
\end{align}
where $q_{s, \alpha} = \frac{2N}{N-sp+ \alpha p}$. In this inequality, we have used the fact that $\mathcal{W}^{p}_{p,s',\beta} = \mathcal{W}$. Also, Using the fact that $\beta = \beta_{1}+\beta_{2} \geq 0$, $2 \beta_{1}, \, 2\beta_{2}, \,  \in (-N,2s')$, $0<2s' - 2 \beta <N$, and the defintion of $s'$ defined in \eqref{Defn:  s'}, we get
\begin{equation*}
 \alpha = \alpha_{1} + \alpha_{2}\geq 0, \quad  \alpha_{1}p, \,\alpha_{2}p \in (-N,sp), \quad \text{and} \quad 0<sp-\alpha p < N .  
\end{equation*}

\smallskip

Similarly, by following the same steps as in the case $p \geq 2$, we obtain for the case $1<p<2$,
\begin{align*}
 & |(v^{\frac{p}{2}})_{\mathcal{A}_{\ell}} - (v^{\frac{p}{2}})_{\mathcal{A}_{n_{0}}}|^{2^{*}_{s', \beta}}  \leq C \sum_{k = n_{0}}^{\ell-1} |(v^{\frac{p}{2}})_{\mathcal{A}_{k}} - (v^{\frac{p}{2}})_{\mathcal{A}_{k+1}}|^{2^{*}_{s', \beta}}   \\ & \leq C \sum_{\ell=n_{0}}^{\ell-1} \left( \int_{\mathcal{A}_{k} \cup\mathcal{A}_{k+1}} \int_{\mathcal{A}_{k} \cup \mathcal{A}_{k+1}} \frac{|(v(x))^{\frac{p}{2}}-(v(y))^{\frac{p}{2}}|^{2}}{|x-y|^{N+2s'}} \, \mathcal{W}^{2}_{r,s',\beta} (x,y) \, |x|^{2\beta_{1}} |y|^{2 \beta_{2}} \, dx \, dy \right)^{\frac{2^{*}_{s', \beta}}{2}} .
\end{align*}
Again, in the above inequality, by replacing $\beta_{1}$ and $\beta_{2}$ with $\frac{\alpha_{1} p}{2}$ and $\frac{\alpha_{2} p}{2}$ respectively, and
using the definition of $s'$ given in \eqref{Defn: s'}, taking $r=p$, and using the fact $\mathcal{W}^{p}_{p,s',\beta}=\mathcal{W}$,  we get
\begin{align*}
& |(v^{\frac{p}{2}})_{\mathcal{A}_{\ell}} - (v^{\frac{p}{2}})_{\mathcal{A}_{n_{0}}}|^{q_{s, \alpha}} \\ &  \leq C  \sum_{\ell=n_{0}}^{\ell-1} \left( \int_{\mathcal{A}_{k} \cup\mathcal{A}_{k+1}} \int_{\mathcal{A}_{k} \cup \mathcal{A}_{k+1}} \frac{|(v(x))^{\frac{p}{2}}-(v(y))^{\frac{p}{2}}|^{2}}{|x-y|^{N+sp}} \, \mathcal{W} (x,y) \,  |x|^{\alpha_{1}p} |y|^{\alpha_{2}p} \, dx \, dy \right)^{\frac{q_{s, \alpha}}{2}}  \\ & \leq C \left( \int_{\mathbb{R}^{N}} \int_{\mathbb{R}^{N}} \frac{|(v(x))^{\frac{p}{2}}-(v(y))^{\frac{p}{2}}|^{2}}{|x-y|^{N+sp}} \, \mathcal{W} (x,y) \, |x|^{\alpha_{1}p} |y|^{\alpha_{2}p} \, dx \, dy \right)^{\frac{q_{s, \alpha}}{2}} \\ & = C \,  \mathcal{R}_{1}(u)^{\frac{q_{s, \alpha}}{2}}.
\end{align*}
Also, using the fact that for any $x \in \mathcal{B} \cap \mathcal{A}_{\ell}$, we have $v(x) > \mathcal{R}_{1}(u)^{\frac{q_{s, \alpha}}{4p}}$, it follows that
\begin{equation*}
    \int_{\mathcal{B} \cap \mathcal{A}_{\ell}}  \, \frac{dx}{|x|^{N}} \leq \frac{1}{\mathcal{R}_{1}(u)^{\frac{q_{s, \alpha}}{4p} p}} \int_{\mathcal{B} \cap \mathcal{A}_{\ell}} \frac{|u(x)|^{p}}{|x|^{sp- \alpha p}} \, dx.
\end{equation*}
Therefore, combining the above two estimates, we obtain
\begin{align*}
  |(v^{\frac{p}{2}})_{\mathcal{A}_{\ell}} - (v^{\frac{p}{2}})_{\mathcal{A}_{n_{0}}}|^{q_{s, \alpha}}   \int_{\mathcal{B} \cap \mathcal{A}_{\ell}}  \, \frac{dx}{|x|^{N}} \leq C \,  \mathcal{R}_{1}(u)^{\frac{q_{s, \alpha}}{4} } \int_{\mathcal{B} \cap \mathcal{A}_{\ell}} \frac{|u(x)|^{p}}{|x|^{sp-\alpha p}} \, dx. 
\end{align*}
Therefore, combining the above inequality  with the inequality \eqref{ineq123}, the inequality \eqref{ineq11} reduces to 
\begin{align*}
   & \int_{ \mathcal{B}} |(v(x))^{\frac{p}{2}}  - \mathcal{R}_{1}(u)^{\frac{q_{s, \alpha}}{4p}}|^{q_{s, \alpha}} \, \frac{dx}{|x|^{N}} \\ & \leq \sum_{\substack{\ell \in \mathbb{Z} \\ \mathcal{B} \cap \mathcal{A}_{\ell} \neq \phi}}  \left( \int_{\mathcal{A}_{\ell}} \int_{\mathcal{A}_{\ell}} \frac{|(v(x))^{\frac{p}{2}}-(v(y))^{\frac{p}{2}}|^{2}}{|x-y|^{N+sp}} \, \mathcal{W} (x,y) \,  |x|^{\alpha_{1}p} |y|^{\alpha_{2}p} \, dx \, dy \right)^{\frac{q_{s, \alpha}}{2}}  \\ & \quad + C \mathcal{R}_{1}(u)^{\frac{q_{s, \alpha}}{4} }  \sum_{\substack{\ell \in \mathbb{Z} \\ \mathcal{B} \cap \mathcal{A}_{\ell} \neq \phi}} \int_{\mathcal{B} \cap \mathcal{A}_{\ell}} \frac{|u(x)|^{p}}{|x|^{sp-\alpha p}} \, dx \\ & \leq \left( \int_{\mathbb{R}^{N}} \int_{\mathbb{R}^{N}} \frac{|(v(x))^{\frac{p}{2}}-(v(y))^{\frac{p}{2}}|^{2}}{|x-y|^{N+sp}} \, \mathcal{W} (x,y) \, |x|^{\alpha_{1}p} |y|^{\alpha_{2}p} \, dx \, dy \right)^{\frac{q_{s, \alpha}}{2}} \\ & \quad + C \mathcal{R}_{1}(u)^{\frac{q_{s, \alpha}}{4} } \int_{\mathbb{R}^{N}} \frac{|u(x)|^{p}}{|x|^{sp-\alpha p}} \, dx . 
\end{align*}
Using Lemma \ref{Lemma: Hardy potential and Lorentz}, the normalization $\| u \|_{L^{p^{*}_{s, \alpha},p}(\mathbb{R}^{N})} = 1$, the definition of $\mathcal{R}_{1}(u)$, and using the fact $\mathcal{R}_{1}(u) \leq 1$, we obtain
 \begin{equation*}
     \int_{ \mathcal{B}} |(v(x))^{\frac{p}{2}}  - \mathcal{R}_{1}(u)^{\frac{q_{s, \alpha}}{4p}}|^{q_{s, \alpha}} \, \frac{dx}{|x|^{N}} \leq C\left(  \mathcal{R}_{1}(u)^{\frac{q_{s, \alpha}}{2}} + \mathcal{R}_{1}(u)^{ \frac{q_{s, \alpha}}{4} } \right) \leq C \,  \mathcal{R}_{1}(u)^{\frac{q_{s, \alpha}}{4}} .
 \end{equation*}
Therefore, using $\omega_{s, \alpha}(x)= |x|^{-\frac{N-sp+ \alpha p}{p}}$, and the Lorentz embedding gives (see Proposition \ref{Proposition on Lorentz space} and note that $L^{q_{s, \alpha}, q_{s, \alpha}} = L^{q_{s, \alpha}}$) 
\begin{align*}
    \| u^{\frac{p}{2}} - \mathcal{R}_{1}(u)^{\frac{q_{s, \alpha}}{4p}} \omega^{\frac{p}{2}}_{s, \alpha} \|_{L^{q_{s, \alpha}, \infty}(\mathcal{B})} & \leq C  \left( \int_{\mathcal{B}} | (u(x))^{\frac{p}{2}} - \mathcal{R}_{1}(u)^{\frac{q_{s, \alpha}}{4p}} (\omega_{s, \alpha}(x))^{\frac{p}{2}}|^{q_{s, \alpha}} \, dx \right)^{\frac{1}{q_{s, \alpha}}} \\ &  = C  \left( \int_{\mathcal{B}} |(v(x))^{\frac{p}{2}} - \mathcal{R}_{1}(u)^{\frac{q_{s, \alpha}}{4p}}|^{q_{s, \alpha}} \, \frac{dx}{|x|^{N}} \right)^{\frac{1}{q_{s, \alpha}}}  \leq C \mathcal{R}_{1}(u)^{\frac{1}{4}}.
\end{align*}
Consequently,
\begin{equation*}
  \inf_{a \geq 0}  \| u^{\frac{p}{2}} -  a \, \omega^{\frac{p}{2}}_{s, \alpha} \|^{4}_{L^{q_{s, \alpha}, \infty}(\mathcal{B})} \leq  \| u^{\frac{p}{2}} - \mathcal{R}_{1}(u)^{\frac{q_{s, \alpha}}{4p}} \omega^{\frac{p}{2}}_{s, \alpha} \|^{4}_{L^{q_{s, \alpha}, \infty}(\mathcal{B})} \leq C \mathcal{R}_{1}(u) .
\end{equation*}
For any $x \in \mathcal{B}^{c}$, we have $(v(x))^{\frac{p}{2}} \leq \mathcal{R}_{1}(u)^{\frac{q_{s, \alpha}}{4p}}$. In particular,
\begin{equation*}
 \omega_{s, \alpha}(x)^{-\frac{p}{2}} |(u(x))^{\frac{p}{2}}- \mathcal{R}_{1}(u)^{\frac{q_{s, \alpha}}{4}} (\omega_{s, \alpha}(x))^{\frac{p}{2}}| =  |(v(x))^{\frac{p}{2}} - \mathcal{R}_{1}(u)^{\frac{q_{s, \alpha}}{4p}}| \leq 2 \mathcal{R}_{1}(u)^{\frac{q_{s, \alpha}}{4p}}, \, \forall \ x \in \mathcal{B}^{c},
\end{equation*}
and therefore
\begin{equation*}
     |(u(x))^{\frac{p}{2}}- \mathcal{R}_{1}(u)^{\frac{q_{s, \alpha}}{4p}} (\omega_{s, \alpha}(x))^{\frac{p}{2}}| \leq 2 \mathcal{R}_{1}(u)^{\frac{q_{s, \alpha}}{4p}} (\omega_{s, \alpha}(x))^{\frac{p}{2}}, \quad \forall \ x \in \mathcal{B}^{c} .
\end{equation*}
Using that $\omega^{\frac{p}{2}}_{s, \alpha} \in L^{q_{s, \alpha}, \infty}(\mathbb{R}^{N})$, where $q_{s, \alpha}= \frac{2N}{N-sp+ \alpha p}$, and the assumption $\mathcal{R}_{1}(u) \leq 1$, we obtain
\begin{equation*}
    \|u^{\frac{p}{2}}- \mathcal{R}_{1}(u)^{\frac{q_{s, \alpha}}{4p}} \omega^{\frac{p}{2}}_{s, \alpha}\|_{L^{q_{s, \alpha}, \infty}(\mathcal{B}^{c})} \leq C \mathcal{R}_{1}(u)^{\frac{q_{s, \alpha}}{4p}} \| \omega^{\frac{p}{2}} \|_{L^{q_{s, \alpha}, \infty}(\mathcal{B}^{c})} \leq C \mathcal{R}_{1}(u)^{\frac{q_{s, \alpha}}{4p}} \leq C \mathcal{R}_{1}(u)^{\frac{1}{4}}. 
\end{equation*}
Consequently,
\begin{equation*}
     \|u^{\frac{p}{2}}- \mathcal{R}_{1}(u)^{\frac{q_{s, \alpha}}{4p}} \omega^{\frac{p}{2}}_{s, \alpha}\|^{4}_{L^{q_{s, \alpha}, \infty}(\mathcal{B}^{c})} \leq C \mathcal{R}_{1}(u).
\end{equation*}
Combining this with the estimate on $\mathcal{B}$, we conclude that
\begin{align*}
    \|u^{\frac{p}{2}}- \mathcal{R}_{1}(u)^{\frac{q_{s, \alpha}}{4p}} \omega^{\frac{p}{2}}_{s, \alpha}\|_{L^{q_{s, \alpha}, \infty}(\mathbb{R}^{N})} & \leq C \Big( \|u^{\frac{p}{2}}- \mathcal{R}_{1}(u)^{\frac{q_{s, \alpha}}{4p}} \omega^{\frac{p}{2}}_{s, \alpha}\|_{L^{q_{s, \alpha}, \infty}(\mathcal{B})} \\ & \quad \quad  + \|u^{\frac{p}{2}}- \mathcal{R}_{1}(u)^{\frac{q_{s, \alpha}}{4p}} \omega^{\frac{p}{2}}_{s, \alpha}\|_{L^{q_{s, \alpha}, \infty}(\mathcal{B}^{c})} \Big) \\ & \leq C \mathcal{R}_{1}(u)^{\frac{1}{4}}. 
\end{align*}
Therefore, combining the cases $\mathcal{R}_{1}(u) > 1$ and $\mathcal{R}_{1}(u) \leq 1$, and under the normalization $\| u \|_{L^{p^{*}_{s, \alpha},p}(\mathbb{R}^{N})} =1$, we obtain
\begin{equation*}
   \inf_{a \geq 0} \|u^{\frac{p}{2}}-  a\,  \omega^{\frac{p}{2}}_{s, \alpha}\|^{4}_{L^{q_{s, \alpha}, \infty}(\mathbb{R}^{N})} \leq \|u^{\frac{p}{2}}- \mathcal{R}_{1}(u)^{\frac{q_{s, \alpha}}{4p}} \omega^{\frac{p}{2}}_{s, \alpha}\|^{4}_{L^{q_{s, \alpha}, \infty}(\mathbb{R}^{N})} \leq C \mathcal{R}_{1}(u). 
\end{equation*}
To remove the normalization assumption $\| u \|_{L^{p^{*}_{s, \alpha},p}(\mathbb{R}^{N})} = 1$, we apply the previous estimate to the function $\widetilde{u}$, defined as in the previous subsection. Thus, for $u \geq 0$, we obtain
\begin{align*}
\inf_{a \geq 0} \|u^{\frac{p}{2}}-  a \, \omega^{\frac{p}{2}}_{s, \alpha}\|^{4}_{L^{q_{s, \alpha}, \infty}(\mathbb{R}^{N})}  \leq C \,  \mathcal{R}_{1}(u) \| u \|^{p}_{L^{p^{*}_{s, \alpha},p}(\mathbb{R}^{N})}.  
\end{align*}
Now, given any $u \in C^{1}_{c}(\mathbb{R}^{N})$, define $u^{\frac{p}{2}}_{+} = \max \{ (u(x))^{\langle \frac{p}{2}\rangle }, 0 \}$ and $u^{\frac{p}{2}}_{-} = \max \{ - (u(x))^{\langle \frac{p}{2}\rangle }, 0 \}$, the positive and negative parts of $u^{\langle \frac{p}{2}\rangle} $ respectively, so that $u^{\langle \frac{p}{2} \rangle} = u^{\frac{p}{2}}_{+} - u^{\frac{p}{2}}_{-}$. Then, following a similar approach to that used in the case $p \geq 2$ and using the definition of $\widetilde{\mathcal{D}}_{s,p}$, we obtain
\begin{align*}
     \delta_{s, p, \alpha} (u) \geq C \left( \int_{\mathbb{R}^{N}}  \frac{|u(x)|^{p}}{|x|^{sp- \alpha p}} \, dx \right)  \widetilde{\mathcal{D}}_{s,p}(u)^{4} ,
\end{align*}
for all $u \in C^{1}_{c}(\mathbb{R}^{N})$. This completes the proof of Theorem \ref{Theorem: Quantitative frac hardy 1 < p < 2}.

\section{Appendix}\label{Appendix}

In this appendix, we describe a local flattening transformation for
Lipschitz boundaries away from the origin. We shall also record the
properties of this transformation that are relevant for weighted
estimates involving the singular weight at the origin.

\subsection{Local flattening of the boundary away from the origin}\label{Appendix: Subsection}
\label{subsec:boundary-flattening} Let $\Omega\subset\mathbb{R}^{N}$ be a bounded Lipschitz domain and assume that $0\notin\partial\Omega$. Fix $x_{0}\in\partial\Omega$. Since $0\notin\partial\Omega$, we have $x_{0}\neq0$.

Since $x_{0}\neq0$ ($x_{0} =   (x_{0}', x_{0,N}) \in \mathbb{R}^{N-1} \times \mathbb{R}$), we may choose a coordinate direction that is not parallel to $x_{0}$ to serve as the new $x_{N}$-axis. Because $\Omega$ is Lipschitz, the boundary $\partial\Omega$ near $x_{0}$ can still be written as a graph over the remaining $N-1$ coordinates in this rotated frame, for any axis direction close enough to the original one. With this choice, the rotated coordinates of $x_{0}$ satisfy $x_{0}'\neq0$.

Choose \(r>0\) sufficiently small such that $r<\min\left\{\frac{|x_{0}|}{2},\frac{|x_{0}'|}{2}\right\}$. Then
\begin{equation*}
    0\notin\overline{B_{r}(x_{0})}.
\end{equation*}
More importantly, for every $x=(x',x_{N})\in\overline{B_{r}(x_{0})}$, we have
\begin{equation*}
    |x'|
\geq |x_{0}'|-|x'-x_{0}'|
\geq |x_{0}'|-|x-x_{0}|
\geq |x_{0}'|-r
>\frac{|x_{0}'|}{2}>0.
\end{equation*}
Hence
\begin{equation}\label{eq:xprime-nonzero}
x'\neq0
\qquad\text{for every }x\in\overline{B_{r}(x_{0})}.
\end{equation}

Since \(\Omega\) is a Lipschitz domain, after possibly decreasing \(r>0\),
there exists a Lipschitz function
\begin{equation*}
    \gamma:\mathcal{U}\subset\mathbb{R}^{N-1}\longrightarrow\mathbb{R}
\end{equation*}
such that
\begin{equation*}
  \partial\Omega\cap B_{r}(x_{0})
=
\left\{
(x',x_{N})\in B_{r}(x_{0}):x_{N}=\gamma(x')
\right\},  
\end{equation*}
and
\begin{equation*}
   \Omega\cap B_{r}(x_{0})
=
\left\{
(x',x_{N})\in B_{r}(x_{0}):x_{N}>\gamma(x')
\right\}. 
\end{equation*}

\begin{figure}[ht]
\centering

\begin{tikzpicture}[
    scale=1.2,
    >=Latex,
    every node/.style={font=\small}
]


\draw[->] (-0.2,0) -- (4.8,0) node[right] {$x'$};
\draw[->] (0,-0.2) -- (0,4.0) node[above] {$x_N$};

\fill (0,0) circle (1.5pt);
\node[below left] at (0,0) {$0$};

\draw[dashed] (2.65,2.25) circle (1.25);
\node at (3.80,3.40) {$B_r(x_0)$};


\draw[thick]
    (1.25,1.40)
    .. controls (1.60,1.65) and (1.95,2.00) ..
             (2.30,2.10)
    .. controls (2.65,2.25) and (2.95,2.05) ..
             (3.30,1.85)
    .. controls (3.65,1.65) and (4.00,1.70) ..
             (4.45,1.95);


\fill[
    pattern=north east lines,
    pattern color=gray!60
]
    (1.15,1.40)
    .. controls (1.60,1.65) and (1.95,2.00) ..
             (2.30,2.10)
    .. controls (2.65,2.25) and (2.95,2.05) ..
             (3.30,1.85)
    .. controls (3.65,1.65) and (4.00,1.70) ..
             (4.45,1.95)
    -- (4.45,3.45)
    -- (1.15,3.45)
    -- cycle;

\fill (2.65,2.25) circle (2pt);
\node[above right] at (2.50,2.25) {$x_0$};

\fill (2.30,2.50) circle (2pt);
\node[right] at (1.80,2.50) {$x$};

\node at (3.65,2.15) {$\partial\Omega$};

\node at (2.65,3.00) {$\Omega\cap B_r(x_0)$};

\node[font=\bfseries] at (2.65,3.85)
    {(a) Original coordinates};


\draw[very thick,gray!60]
    (5.25,-0.1) -- (5.25,3.75);


\draw[->] (5.75,0) -- (10.25,0) node[right] {$y'$};
\draw[->] (5.75,-0.15) -- (5.75,4.0) node[above] {$y_N$};

\fill (5.75,0) circle (1.5pt);
\node[below left] at (5.75,0) {$0$};


\fill[
    pattern=north east lines,
    pattern color=gray!60
]
    (6.40,0)
    .. controls (6.53,0.35) and (6.60,0.65) ..
             (6.85,0.82)
    .. controls (7.10,1.00) and (7.20,0.78) ..
             (7.45,1.05)
    .. controls (7.70,1.32) and (7.85,1.10) ..
             (8.10,1.28)
    .. controls (8.35,1.48) and (8.50,1.25) ..
             (8.75,1.40)
    .. controls (9.00,1.55) and (9.15,1.28) ..
             (9.38,1.08)
    .. controls (9.63,0.85) and (9.73,0.50) ..
             (9.80,0)
    -- cycle;


\draw[thick]
    (6.40,0)
    .. controls (6.53,0.35) and (6.60,0.65) ..
             (6.85,0.82)
    .. controls (7.10,1.00) and (7.20,0.78) ..
             (7.45,1.05)
    .. controls (7.70,1.32) and (7.85,1.10) ..
             (8.10,1.28)
    .. controls (8.35,1.48) and (8.50,1.25) ..
             (8.75,1.40)
    .. controls (9.00,1.55) and (9.15,1.28) ..
             (9.38,1.08)
    .. controls (9.63,0.85) and (9.73,0.50) ..
             (9.80,0);


\draw[thick]
    (6.40,0) -- (9.80,0);

\node[below] at (8.10,-0.04)
    {$y_N=0$};


\node at (8.50,0.58)
    {$T(\Omega\cap B_r(x_0))$};


\fill (8.05,1.02) circle (2pt);

\node[right] at (8.05,1.02)
    {$T(x)$};


\draw[<->]
    (5.75,0) -- (8.05,1.02);

\node[below right] at (7.00,0.5)
    {$|T(x)|>0$};


\draw[->,very thick]
    (4.55,3.15)
    .. controls (5.05,3.65) and (5.55,3.65) ..
    (6.10,3.15);


\node[align=center] at (8.5,1.9)
    {$\partial\Omega
    \displaystyle\longmapsto
    y_N=0$};


\node[align=center] at (8.10,-0.70)
    {$|T(x)|\sim |x|$};

\node[font=\bfseries] at (8.5,3.85)
    {(b) Flattened coordinates};

\end{tikzpicture}

\caption{
Local flattening of a Lipschitz boundary away from the origin.
}
\label{fig:boundary-flattening-away-origin}

\end{figure}

Define the flattening map
\begin{equation*}
   T:B_{r}(x_{0}) \longrightarrow Q
\end{equation*}
by
\begin{equation*}
T(x',x_{N})
=
\bigl(x',x_{N}-\gamma(x')\bigr).
\end{equation*}
Its inverse is given by
\begin{equation*}
T^{-1}(y',y_{N})
=
\bigl(y',y_{N}+\gamma(y')\bigr).
\end{equation*}
Since $\gamma$ is Lipschitz, $T$ is bi-Lipschitz. Also, $T(  \Omega \cap B_{r}(x_{0})) = Q_{+} := \{ x \in Q : x_{N}>0 \}$.

Indeed, if $x\in B_{r}(x_{0}) \cap \partial\Omega $, then
$x_N=\gamma(x')$, and therefore
\begin{equation*}
  T(x',x_N)
=
(x',0).  
\end{equation*}

We now verify that the flattening map does not map any point of the
domain to the origin. Let $x=(x',x_N)\in B_r(x_0)\cap\Omega$. By \eqref{eq:xprime-nonzero}, we have $x'\neq0$.

Since the first $N-1$ components of $T(x)$ are exactly
$x'$, we obtain
\begin{equation*}
  T(x)
=
\bigl(x',x_N-\gamma(x')\bigr)
\neq0.  
\end{equation*}
Thus
\begin{equation}\label{eq:Ttilde-nonzero}
T(x)\neq0
\qquad
\text{for every }x\in B_r(x_0)\cap\Omega.
\end{equation}

In particular, this conclusion holds pointwise for every point in the
whole neighborhood \(B_r(x_0)\cap\Omega\).

Finally, since $0\notin\overline{B_r(x_0)}$,
there exist constants $c_0, \, C_0>0$ depending on $r$ such that
\begin{equation*}
    0<c_0\leq |x|\leq C_0
\qquad
\text{for every }x\in B_r(x_0)\cap\Omega.
\end{equation*}
On the other hand, combining \eqref{eq:Ttilde-nonzero} with the identity $T(x',\gamma(x'))=(x',0)$ and \eqref{eq:xprime-nonzero}, we see that $T(x)\neq0$ for every $x\in\overline{B_r(x_0)}\cap\overline{\Omega}$. Since $\overline{B_r(x_0)}\cap\overline{\Omega}$ is compact and $T$ is continuous, the function $x\mapsto|T(x)|$ attains a positive minimum and a finite maximum on this set. Hence, there exist constants $c_1, C_1>0$ depending on $r$ such that
\begin{equation*}
    0<c_1\leq |T(x)|\leq C_1
\qquad
\text{for every }x\in B_r(x_0)\cap\Omega.
\end{equation*}
Consequently,
\begin{equation*}
   \frac{c_1}{C_0}|x|
\leq |T(x)|
\leq \frac{C_1}{c_0}|x|, 
\end{equation*}
and hence
\begin{equation}\label{eq:T-comparable}
|T(x)|\sim |x|
\qquad
\text{for every }x\in B_r(x_0)\cap\Omega.
\end{equation}

\bigskip

\textbf{Acknowledgement:} The author gratefully acknowledges the financial support of the Anusandhan National Research Foundation (ANRF) through the National Postdoctoral Fellowship (PDF/2025/004611). The author also thanks the Theoretical Statistics and Mathematics Unit, Indian Statistical Institute, Delhi Centre, India, for providing a supportive and stimulating research environment.


\end{document}